\newtheorem {theorem}{Theorem}[section]
\newtheorem {corollary}{Corollary}[section]
\newtheorem {proposition}{Proposition}[section]
\newtheorem {lemma}{Lemma}[section]
\newtheorem {example}{Example}[section]
\newtheorem {definition}{Definition}[section]
\newtheorem {remark}{Remark}[section]
\def\Limsup{\mathop{{\rm Lim}\,{\rm sup}}}
\def\ow{o\kern-.42em\raise.82ex\hbox{
   \vrule width .12em height .0ex depth .075ex \kern-0.16em \char'56}\kern-.07em}
\def\OW{o\kern-.460em\raise1.36ex\hbox{
\vrule width .13em height .0ex depth .075ex \kern-0.16em
\char'56}\kern-.07em}
\def\DD{D\kern-.7em\raise0.4ex\hbox{\char '55}\kern.33em}
\title{Exact penalty functions in optimization with unbounded constraint sets}
\author[Liguo Jiao]{Liguo Jiao}
\address{Academy for Advanced Interdisciplinary Studies, Northeast Normal University, Changchun, 130024, Jilin Province, China}
\email{jiaolg356@nenu.edu.cn; hanchezi@163.com}
\author[Ti\'{\^{e}}n-S\OW n Ph\d{a}m]{Ti\'{\^{e}}n-S\OW n Ph\d{a}m$^\dagger$}
\address{Department of Mathematics, Dalat University, 1 Phu Dong Thien Vuong, Dalat, Vietnam}
\email{sonpt@dlu.edu.vn}
\author[Nguyen Van Tuyen]{Nguyen Van Tuyen}
\address{Department of Mathematics, Hanoi Pedagogical University 2, Xuan Hoa, Phuc Yen, Vinh Phuc, Vietnam}
\email{nguyenvantuyen83@hpu2.edu.vn}
\date{\today}
\subjclass[2010]{Primary 90C26; Secondary 49J52, 14P10}
\keywords{constrained optimization, exact penalty, calmness, semi-algebraic, \L ojasiewicz inequality, non-degeneracy, Newton polyhedron}
\thanks{The work was partially supported by the Chinese National Natural Science Foundation under grant numbers 12471478, 12371300.
}
\thanks{$^\dagger$Corresponding author}
\begin{document}

\begin{abstract}
This paper identifies necessary and sufficient conditions for the exactness of penalty functions in optimization problems whose constraint sets are not necessarily bounded.  The case where the data of problems is {\em locally Lipschitz, semi-algebraic or non-degenerate polynomials} is studied in detail.
The conditions are given in terms of properties of the objective and residual functions of the problems in question. 
The obtained results generalize and improve some known results in the literature on exact penalty functions. 
\end{abstract}

\maketitle

\section{Introduction}

Constrained optimization problems arise in various fields, including engineering, economics, and machine learning, where the goal is to optimize an objective function subject to a set of constraints. Problems with complicated constraints are very difficult to deal with.
A common approach for solving such problems is the usage of penalty function methods, wherein a constrained optimization problem is transformed into a sequence of unconstrained optimization problems whose solutions ideally converge to a solution of the original constrained problem; the unconstrained problems are formed by bringing the constraints into the objective function via a residual function and a scalar penalty parameter. 
Among these methods, {\em exact penalty functions} are particularly important because they allow us to solve a constrained optimization problem by solving a {\em single unconstrained} optimization problem.  
More precisely, a penalty function is said to be {\em exact} (or to have the {\em exact penalty property}) if there is a penalty parameter for which the corresponding unconstrained optimization problem and the original constrained optimization problem have the same optimal value and the same optimal solutions.

The concept of exact penalization traces back to the seminal works of Eremin~\cite{Eremin1966} and Zangwill~\cite{Zangwill1967} (see also \cite{Pietrzykowski1969}). There is a huge literature on all aspects of the theory and applications of exact penalty functions; for more details, we refer the reader to the monographs \cite{CuiPang2021, Fiacco1990, Hiriart-Urruty1993book, Ioffe2017, Luo1996, Mordukhovich2018, Rockafellar1998, Zaslavski2010}, the surveys \cite{Boukari2010, Burke1991-2, Fletcher1983} and the recent papers \cite{LGuo2018, Jiang2023, Qian2024, Xiao2021} with the references therein.

In this paper we would like to identify necessary and sufficient conditions for a penalty function to be exact; the conditions are given in terms of properties of the objective and residual functions of the problems in question. As far as we know, for optimization problems whose constraint sets are {\em unbounded}, there are not many such studies; see \cite{Bertsekas1975, Contaldi1993, Dolgopolik2016, Zaslavski2005, Zaslavski2010, Zaslavski2013}.
%Charalambous1978, Han1979, 

\subsection*{Contributions}
We mainly study the exactness of penalty functions for constrained optimization problems with potentially unbounded constraint sets. Among other things, our main contributions are given in several steps:

\begin{itemize}
\item For problems defined by locally Lipschitz functions, under some regularity conditions, a new necessary and sufficient condition for a penalty function to be exact is derived (see Theorem~\ref{DL41}); this result improves %\cite[Theorem~4.2]{Contaldi1993} and 
\cite[Theorems~1.5~and~3.1]{Zaslavski2005} as well as \cite[Theorems~1.3]{Zaslavski2013} (in the finite-dimensional setting).

\item Based on the study of {\L}ojasiewicz inequalities on unbounded sets, exact penalty functions for constrained optimization problems with semi-algebraic data are proposed. In the semi-algebraic setting, the obtained results generalize \cite[Theorem~1]{Warga1992}, \cite[Theorem~3.1]{Dedieu1992} and \cite[Theorem~2.1.2]{Luo1996}, wherein optimization problems over compact sets with subanalytic data are studied. 
It should be noted that the cited theorems could fail to hold when the compact assumption is absent (see Examples~\ref{VD53}~and~\ref{VD55}). 

\item Based on the theory of Newton polyhedra, exact penalty functions for non-degenerate polynomial optimization problems are provided (see Theorems~\ref{DL61}, \ref{DL62} and \ref{DL63}). These results, together with ones in \cite{Dinh2014-2, Dinh2014-1, HaHV2013, HaHV2017, PHAMTS2019-1}, suggest that the class of polynomial mappings, which is non-degenerate, may offer an appropriate domain on which the machinery of polynomial optimization works with full efficiency.
\end{itemize}

At this point, we would like to note the following facts:
\begin{itemize}
\item the closedness of constraint sets and the continuity of objective functions, except for Theorems~\ref{DL41} and \ref{DL61}, are not required;

\item the boundedness of constraint sets is not imposed; 

\item the existence of optimal solutions is not assumed.

\end{itemize}

We confine our study to the finite-dimensional case for two reasons. First of all, to lighten the exposition, we would like to concentrate on the basic ideas without technical and notational complications. Furthermore, the results obtained in Sections~\ref{Section5} and \ref{Section6} are peculiar to finite-dimensions.

The tools used in this study come from variational analysis and semi-algebraic geometry.

The rest of the paper is organized as follows.
Some definitions and preliminary results from variational analysis and semi-algebraic geometry are recalled in Section~\ref{Section2}. The results and their proofs are presented in Sections~\ref{Section3}, \ref{Section4}, \ref{Section5}~and~\ref{Section6}. Conclusions are given in Section~\ref{Section7}.

\section{Preliminaries} \label{Section2}

\subsection{Notation} 
Throughout this work we deal with the Euclidean space $\mathbb{R}^n$ equipped with the usual scalar product $\langle \cdot, \cdot \rangle$ and the corresponding norm $\| \cdot\|.$ We denote by $\mathbb{B}_r(x)$ the closed ball centered at $x$ with radius $r;$  when ${x}$ is the origin of $\mathbb{R}^n$ we write $\mathbb{B}_{r}$ instead of $\mathbb{B}_{r}({x}),$ and when $r = 1$ we write  $\mathbb{B}$ instead of $\mathbb{B}_{1}.$ We will adopt the convention that $\inf \emptyset = \infty$ and $\sup \emptyset = -\infty;$ the notation $x  \to \infty $ means that $\|x\| \to \infty$.

Let $\mathbb{R}_+ := [0, \infty), \mathbb{R}_- := (-\infty, 0]$ and $\overline{\mathbb{R}} := \mathbb{R} \cup \{\infty\}.$ 
For a real number $r,$ we write $[r]_+ := \max\{r, 0\}.$ 

For a nonempty set $\Omega \subset \mathbb{R}^n,$ the closure, convex hull and conic hull of $\Omega$ are denoted, respectively, by 
$\mathrm{cl}\, {\Omega},$ $ \mathrm{co}\, \Omega$ and $\mathrm{cone}\, \Omega.$ 
We will associate with $\Omega$ the {\em distance function}
$$\mathrm{dist}(\cdot, \Omega) \colon \mathbb{R}^n \to \mathbb{R}, \quad x \mapsto \inf_{x' \in \Omega} \|x - x'\|,$$
and define the {\em Euclidean projector of} $x \in \mathbb{R}^n$ to $\Omega$ by
\begin{eqnarray*}
\Pi_{\Omega}(x) &:=& \{x' \in \Omega \ | \  \|x - x' \| = \mathrm{dist}(x, \Omega)\}.
\end{eqnarray*}
The {\em indicator function} $\delta_{\Omega} \colon \mathbb{R}^n \to \overline{\mathbb{R}}$ of $\Omega$ is defined by
\begin{eqnarray*}
\delta_\Omega(x) &:=&
\begin{cases}
0 & \textrm{ if } x \in \Omega, \\
\infty & \textrm{ otherwise.}
\end{cases}
\end{eqnarray*}

For an extended real-valued function $f \colon \mathbb{R}^n \rightarrow \overline{\mathbb{R}},$ we denote its {\em effective domain}, {\em graph,}  and {\em epigraph} by,  respectively,
\begin{eqnarray*}
\mathrm{dom} f &:=& \{ x \in \mathbb{R}^n \ | \ f(x) < \infty  \},\\
\mathrm{gph} f &:=& \{ (x, y) \in \mathbb{R}^n \times \mathbb{R} \ | \ f(x)  = y \},\\
\mathrm{epi} f &:=& \{ (x, y) \in \mathbb{R}^n \times \mathbb{R} \ | \ f(x) \le y \}.
\end{eqnarray*}
We call $f$ a {\em proper} function if $f(x) < \infty$ for at least one $x \in \mathbb{R}^n,$ or in other words, if $\mathrm{dom} f$ is a nonempty set.
The function $f$ is said to be {\em lower semicontinuous} if its epigraph is a closed set. 

For an extended real-valued function $f \colon \mathbb{R}^n \rightarrow \overline{\mathbb{R}}$ and a set $\Omega \subset \mathbb{R}^n,$  we let
\begin{eqnarray*}
\inf_\Omega f &:=& \inf_{x \in \Omega}  f(x) \ := \  \inf \{ f(x) \mid x \in \Omega\}.
\end{eqnarray*}
We introduce notation also for the set of points $x$ where the minimum of $f$ over $\Omega$ is regarded as
being attained:
\begin{eqnarray*}
\mathrm{argmin}_\Omega f(x) &:= &
\begin{cases}
\{x \in \Omega \mid f(x) = \inf_{x \in \Omega} f(x)\} & \textrm{ if } \inf_{x \in \Omega} f(x) \ne \infty, \\
\emptyset & \textrm{ otherwise.}
\end{cases}
\end{eqnarray*}

For a set-valued mapping $F \colon \mathbb{R}^n \rightrightarrows \mathbb{R}^m,$ we denote its {\em graph} by
\begin{eqnarray*}
\mathrm{gph} F&:=& \{(x,y)\in \mathbb{R}^n \times \mathbb{R}^m \mid y \in F(x)\};
\end{eqnarray*}
the {\em Painlev\'e--Kuratowski outer limit} of $F$ at $x \in \mathbb{R}^n$ is defined by
\begin{eqnarray*}
\Limsup_{x' \to {x}} F(x) &:=& \{y \in \mathbb{R}^m \mid \exists x_k \to {x}, \exists y_k \in F(x_k), y_k \to y\}.
\end{eqnarray*}

\subsection{Normal cones, subdifferentials and coderivatives}

Here we recall some definitions and results of variational analysis, which can be found in~\cite{Mordukhovich2006, Mordukhovich2018, Rockafellar1998}.

\begin{definition}{\rm Consider a set $\Omega\subset\mathbb{R}^n$ and a point ${x} \in \Omega.$
The {\em limiting normal cone} (also known as the {\em basic} or {\em Mordukhovich normal cone}) to $\Omega$ at ${x}$
is defined by
\begin{eqnarray*}
N_{\Omega}(x) &:=& \Limsup_{x' \rightarrow x} \Big[\mathrm{cone} \big(x' - \Pi_{\Omega}(x') \big) \Big].
\end{eqnarray*}
If $x \not \in \Omega,$ we put ${N}_\Omega({x}) := \emptyset.$
}\end{definition}

\begin{remark}{\rm
(i) It is well known that $N_\Omega({x})$ is a closed (possibly non-convex) cone.

(ii) If $\Omega$ is a manifold of class $C^1,$ then the normal cone $N_\Omega({x})$ is equal to the normal space to $\Omega$ at ${x}$ in the sense of differential geometry; see \cite[Example~6.8]{Rockafellar1998}. 
}\end{remark}

\begin{definition}{\rm
Consider an extended real-valued function $f\colon\mathbb{R}^n \to \overline{\mathbb{R}}$ and a point ${x} \in \mathrm{dom} f.$ The {\em limiting} (or {\em Mordukhovich}) {\em subdifferential} of $f$ at ${x}$ is defined by
\begin{eqnarray*}
\partial f({x}) &:=& \{ u \in \mathbb{R}^n \mid (u,-1)\in {N}_{\mathrm{epi} f}({x},f({x}))  \}.
\end{eqnarray*}
If $x \not \in \mathrm{dom} f,$ we put $\partial f({x}) := \emptyset.$
}\end{definition}

\begin{remark}{\rm
In \cite{Mordukhovich2006, Mordukhovich2018, Rockafellar1998} the reader can find equivalent analytic descriptions of the limiting subdifferential $\partial f({x})$ and comprehensive studies of it and related constructions. For convex $f,$ this subdifferential coincides with the convex subdifferential. Furthermore, if the function $f$ is of class $C^1,$ then $\partial f({x}) = \{\nabla f({x})\}.$
}\end{remark}

\begin{definition}{\rm 
Consider a set-valued mapping $F \colon \mathbb{R}^n \rightrightarrows \mathbb{R}^m$ and a point $({x}, {y}) \in \mathrm{gph} F.$ {\em The (basic) coderivative} of $F$ at $({x}, {y})$ is the set-valued mapping $D^*F({x}, {y}) \colon \mathbb{R}^m \rightrightarrows \mathbb{R}^n$ defined by
\begin{eqnarray*}
D^*F({x}, {y})(v) &:=& \{u \in \mathbb{R}^n \mid (u, -v)\in N_{\mathrm{gph} F} ({x}, {y}) \} \quad \textrm{ for all } \quad v \in \mathbb{R}^m.
\end{eqnarray*}
}\end{definition}

The following facts are well known (see, for example, \cite{Mordukhovich2018}).

\begin{lemma}\label{BD21}
Consider a nonempty closed set $\Omega\subset\mathbb{R}^n.$ We have for all ${x} \in \Omega,$
\begin{eqnarray*}
N_{\Omega}({x}) &=& \Limsup_{x' \xrightarrow{\Omega} {x}} {N}_{\Omega}(x'),
\end{eqnarray*}
where $x' \xrightarrow{\Omega} {x}$ means that $x' \rightarrow {x} $ with $x' \in \Omega.$ 
\end{lemma}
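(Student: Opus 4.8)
The plan is to prove the two inclusions separately, observing that the statement is just the outer semicontinuity (robustness) of the normal-cone mapping $x' \mapsto N_\Omega(x')$ relative to $\Omega$, and that essentially all the content sits in one of the two inclusions. First I would dispose of $N_\Omega(x) \subseteq \Limsup_{x' \xrightarrow{\Omega} x} N_\Omega(x')$, which is immediate: given $v \in N_\Omega(x)$, the constant sequences $x_k := x \in \Omega$ and $v_k := v \in N_\Omega(x_k)$ satisfy $x_k \xrightarrow{\Omega} x$ and $v_k \to v$, so $v$ lies in the outer limit by definition.

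The reverse inclusion $\Limsup_{x' \xrightarrow{\Omega} x} N_\Omega(x') \subseteq N_\Omega(x)$ is where the work lies, and here I would unwind the definition of the limiting normal cone as itself an outer limit of the proximal-type cones $\mathrm{cone}(x' - \Pi_\Omega(x'))$ and then run a diagonal argument. Fix $v \in \Limsup_{x' \xrightarrow{\Omega} x} N_\Omega(x')$, so there exist $x_k \xrightarrow{\Omega} x$ and $v_k \in N_\Omega(x_k)$ with $v_k \to v$. Since $\Omega$ is closed, $\Pi_\Omega$ is nonempty-valued everywhere, so each membership $v_k \in N_\Omega(x_k) = \Limsup_{x' \to x_k}[\mathrm{cone}(x' - \Pi_\Omega(x'))]$ furnishes, for every fixed $k$, sequences $y^k_j \to x_k$ and $w^k_j \in \mathrm{cone}(y^k_j - \Pi_\Omega(y^k_j))$ with $w^k_j \to v_k$ as $j \to \infty$.

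I would then extract a diagonal subsequence: for each $k$, choose $j(k)$ so large that $\|y^k_{j(k)} - x_k\| < 1/k$ and $\|w^k_{j(k)} - v_k\| < 1/k$, and set $z_k := y^k_{j(k)}$ and $u_k := w^k_{j(k)}$. The triangle inequality, combined with $x_k \to x$ and $v_k \to v$, then yields $z_k \to x$ and $u_k \to v$ simultaneously, while by construction $u_k \in \mathrm{cone}(z_k - \Pi_\Omega(z_k))$. Hence $v \in \Limsup_{x' \to x}[\mathrm{cone}(x' - \Pi_\Omega(x'))] = N_\Omega(x)$, which closes the argument.

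The step I expect to demand the most care is the diagonal selection. Two points must be respected: the inner approximating points $z_k$ are permitted to leave $\Omega$ (indeed they generally do, since the defining outer limit for $N_\Omega(x_k)$ ranges over all $x' \to x_k$ in $\mathbb{R}^n$ and not merely over $x' \in \Omega$), and the two convergences $z_k \to x$ and $u_k \to v$ must be arranged to hold at once, which is precisely why the $1/k$ bookkeeping is imposed on both $\|y^k_{j(k)} - x_k\|$ and $\|w^k_{j(k)} - v_k\|$. Beyond this, the proof is routine manipulation of the Painlev\'e--Kuratowski outer limit together with the nonemptiness of projections onto the closed set $\Omega$.
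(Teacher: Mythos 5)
Your proof is correct: the easy inclusion via constant sequences and the reverse inclusion via unwinding $N_\Omega(x_k)$ as an outer limit of the cones $\mathrm{cone}(x'-\Pi_\Omega(x'))$ followed by a $1/k$ diagonal extraction is exactly the standard argument, and the diagonal step is carried out with the right care. The paper itself offers no proof of this lemma (it is listed among facts quoted from Mordukhovich's book), so there is nothing to compare against; your argument would serve as a correct self-contained justification.
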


\begin{lemma} \label{BD22}
Let $\Omega \subset \mathbb{R}^n$ be a nonempty closed set. Then
\begin{eqnarray*}
\partial \mathrm{dist}(\cdot, \Omega)(x) &=& 
\begin{cases}
N_{\Omega}(x) \cap \mathbb{B} & \quad \textrm{ if } x \in \Omega, \\
\frac{x - \Pi_{\Omega}(x)}{\mathrm{dist}(x, \Omega)} & \quad \textrm{ otherwise.}
\end{cases}
\end{eqnarray*}
\end{lemma}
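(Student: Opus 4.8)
The plan is to treat the two regimes $x\in\Omega$ and $x\notin\Omega$ separately, and in both to pass from the regular (Fr\'echet) objects $\widehat\partial\, d$ and $\widehat N_\Omega$ to the limiting ones through the outer-limit representations $\partial f(x)=\Limsup_{x'\to x}\widehat\partial f(x')$ and $N_\Omega(x)=\Limsup_{x'\xrightarrow{\Omega}x}\widehat N_\Omega(x')$ of variational analysis; throughout write $d:=\mathrm{dist}(\cdot,\Omega)$. The preliminary observation is that $d$ is globally Lipschitz with constant $1$, whence every limiting subgradient has norm at most $1$ and $\partial d(x)\subseteq\mathbb{B}$ for every $x$; this already yields the inclusion into $\mathbb{B}$ present in both branches of the formula. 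Since $\Omega$ is closed and we work in $\mathbb{R}^n$, the projection set $\Pi_\Omega(x)$ is nonempty for every $x$, and $d(x)>0$ exactly when $x\notin\Omega$.

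For $x\notin\Omega$ I would argue as follows. The complement of $\Omega$ is open, so all points near $x$ lie off $\Omega$; at any such point $x'$ with a unique nearest point $p'$ the function $d$ is differentiable with $\nabla d(x')=(x'-p')/d(x')$, while at points of non-uniqueness $\widehat\partial d(x')=\emptyset$. Hence $\partial d(x)=\Limsup_{x'\to x}\widehat\partial d(x')$ consists precisely of limits $(x'-p')/d(x')$ along unique-projection points $x'\to x$. Using the outer semicontinuity (closed graph) of the metric projection $\Pi_\Omega$ together with the continuity of $d$, every such limit has the form $(x-p)/d(x)$ with $p\in\Pi_\Omega(x)$, giving the inclusion ``$\subseteq$''. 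For the reverse inclusion I would realize each $p\in\Pi_\Omega(x)$ by a sequence of differentiability points: setting $x_t:=p+t(x-p)$ for $t\in(0,1)$, a short argument shows $p$ is the \emph{unique} nearest point of $\Omega$ to $x_t$, so $d$ is differentiable at $x_t$ with $\nabla d(x_t)=(x-p)/d(x)$; letting $t\uparrow 1$ and invoking the robustness of the limiting subdifferential yields $(x-p)/d(x)\in\partial d(x)$.

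For $x\in\Omega$ the core is the Fr\'echet-level identity $\widehat\partial d(x)=\widehat N_\Omega(x)\cap\mathbb{B}$ for all $x\in\Omega$. The inclusion ``$\subseteq$'' follows by testing the defining inequality of $\widehat\partial d(x)$ against points of $\Omega$, on which $d\equiv 0$; the inclusion ``$\supseteq$'' follows by splitting an increment $y-x=(y-p)+(p-x)$ with $p\in\Pi_\Omega(y)$, bounding the first part by $\|v\|\,d(y)\le d(y)$ and the second by the regular-normal estimate, while controlling $\|p-x\|\le 2\|y-x\|$ since $x\in\Omega$. Taking outer limits, $N_\Omega(x)\cap\mathbb{B}\subseteq\partial d(x)$ is obtained by truncating regular normals $v_k\in\widehat N_\Omega(x_k)$, $x_k\xrightarrow{\Omega}x$, to the unit ball (which does not disturb convergence once $\|v\|\le 1$); the opposite inclusion uses that each contribution to $\partial d(x)=\Limsup_{x'\to x}\widehat\partial d(x')$ is, in the in-set case a regular normal truncated to $\mathbb{B}$, and in the off-set case a unit proximal normal $(x'-p')/d(x')$, which by the proximal-normal inequality lies in $\widehat N_\Omega(p')$ with $p'\to x$; in either case the limit lies in $\Limsup_{y\xrightarrow{\Omega}x}\widehat N_\Omega(y)=N_\Omega(x)$ (cf.\ Lemma~\ref{BD21}) and in $\mathbb{B}$.

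The step I expect to be the main obstacle is precisely this in-set case: reconciling the outer limit defining $\partial d(x)$, which a priori receives contributions from approach points both inside and outside $\Omega$, with the single cone $N_\Omega(x)$, and doing so compatibly with the ball truncation. The off-set gradients converge to \emph{unit} vectors, whereas $N_\Omega(x)\cap\mathbb{B}$ also contains sub-unit vectors coming from the in-set regular subdifferentials, so the two families of contributions must be shown to assemble exactly into $N_\Omega(x)\cap\mathbb{B}$; the Fr\'echet identity $\widehat\partial d=\widehat N_\Omega\cap\mathbb{B}$ on $\Omega$ is what makes this bookkeeping go through.
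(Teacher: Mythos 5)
The paper does not prove this lemma at all: it is listed among the facts stated as ``well-known'' with a pointer to the literature (Mordukhovich's book; see also Rockafellar--Wets, Example~8.53, and Mordukhovich 2006, Theorems~1.33 and~1.97/1.102), so there is no in-paper argument to compare against. Your reconstruction is the standard proof from those references and is sound: the out-of-set case via differentiability of $\mathrm{dist}(\cdot,\Omega)$ at unique-projection points, emptiness of $\widehat\partial\,\mathrm{dist}(\cdot,\Omega)$ at non-unique-projection points, outer semicontinuity of $\Pi_\Omega$, and the segment trick $x_t=p+t(x-p)$ for the reverse inclusion; the in-set case via the Fr\'echet-level identity $\widehat\partial\,\mathrm{dist}(\cdot,\Omega)(x)=\widehat N_\Omega(x)\cap\mathbb{B}$ followed by the outer-limit bookkeeping you describe. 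The two points that genuinely need justification, and that you correctly isolate, are (a) that $\widehat\partial\,\mathrm{dist}(\cdot,\Omega)(x')=\emptyset$ when $x'\notin\Omega$ has a non-singleton projection (this follows from local semiconcavity of the distance function off $\Omega$, or more elementarily from the fact that any $v\in\widehat\partial\,\mathrm{dist}(\cdot,\Omega)(x')$ must equal $(x'-p')/\mathrm{dist}(x',\Omega)$ for \emph{every} $p'\in\Pi_\Omega(x')$, forcing uniqueness), and (b) the assembly of in-set and off-set contributions into $N_\Omega(x)\cap\mathbb{B}$, where the unit proximal normals $(x'-p')/\mathrm{dist}(x',\Omega)\in\widehat N_\Omega(p')$ with $p'\xrightarrow{\Omega}x$ are absorbed via $\Limsup_{y\xrightarrow{\Omega}x}\widehat N_\Omega(y)=N_\Omega(x)$. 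Both are handled correctly in your sketch, and your use of $N_\Omega(x)=\Limsup_{x'\xrightarrow{\Omega}x}\widehat N_\Omega(x')$ is consistent with the paper's definition of $N_\Omega$ via proximal normals, since all these descriptions coincide in $\mathbb{R}^n$.
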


\begin{lemma} \label{BD23}
For a lower semicontinuous function $f \colon \mathbb{R}^n \to \overline{\mathbb{R}}$ and a point $\overline{x} \in \mathrm{dom} f,$ we have
\begin{eqnarray*}
\partial f(\overline{x}) &=& \Limsup_{x \xrightarrow{f} \overline{x}} {\partial} f(x),
\end{eqnarray*}
where $x \xrightarrow{f} \overline{x}$ means that $x  \to \overline{x}$ and $f(x) \to f(\overline{x}).$
\end{lemma}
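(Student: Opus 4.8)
The plan is to transfer the robustness of the limiting normal cone recorded in Lemma~\ref{BD21} from the epigraph $\mathrm{epi} f$ to the subdifferential, via the defining relation $\partial f(x) = \{u \mid (u,-1) \in N_{\mathrm{epi} f}(x, f(x))\}$. Since $f$ is lower semicontinuous, $\mathrm{epi} f$ is a nonempty closed subset of $\mathbb{R}^n \times \mathbb{R}$ (it contains $(\overline{x}, f(\overline{x}))$ because $\overline{x} \in \mathrm{dom} f$), so Lemma~\ref{BD21} is available for $\Omega = \mathrm{epi} f$ at the point $(\overline{x}, f(\overline{x}))$.

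For the inclusion $\partial f(\overline{x}) \subseteq \Limsup_{x \xrightarrow{f} \overline{x}} \partial f(x)$ I would simply use the constant sequence $x_k \equiv \overline{x}$: it satisfies $x_k \to \overline{x}$ and $f(x_k) \to f(\overline{x})$, so any $u \in \partial f(\overline{x})$ is the limit of $u_k := u \in \partial f(x_k)$, whence $u$ lies in the outer limit. This direction is purely formal.

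The substantive inclusion is the reverse one. Given $u \in \Limsup_{x \xrightarrow{f} \overline{x}} \partial f(x)$, choose $x_k \to \overline{x}$ with $f(x_k) \to f(\overline{x})$ and $u_k \in \partial f(x_k)$ with $u_k \to u$; note $x_k \in \mathrm{dom} f$ automatically, since $\partial f(x_k) \neq \emptyset$. By definition of the subdifferential, $(u_k, -1) \in N_{\mathrm{epi} f}(x_k, f(x_k))$, and the base points $(x_k, f(x_k))$ lie in $\mathrm{epi} f$ and converge to $(\overline{x}, f(\overline{x})) \in \mathrm{epi} f$. Hence $(x_k, f(x_k)) \xrightarrow{\mathrm{epi} f} (\overline{x}, f(\overline{x}))$ while $(u_k, -1) \to (u, -1)$, so Lemma~\ref{BD21} yields $(u, -1) \in N_{\mathrm{epi} f}(\overline{x}, f(\overline{x}))$, that is, $u \in \partial f(\overline{x})$.

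The two hypotheses enter as follows. Lower semicontinuity of $f$ is precisely what makes $\mathrm{epi} f$ closed, so that Lemma~\ref{BD21} applies. The $f$-attentive mode of convergence $x \xrightarrow{f} \overline{x}$, requiring $f(x_k) \to f(\overline{x})$ and not merely $x_k \to \overline{x}$, is what guarantees that the base points $(x_k, f(x_k))$ actually converge to $(\overline{x}, f(\overline{x}))$ inside $\mathrm{epi} f$; under plain convergence $x_k \to \overline{x}$, lower semicontinuity gives only $\liminf_k f(x_k) \ge f(\overline{x})$, the base points could fail to converge to the desired limit, and Lemma~\ref{BD21} could not be invoked. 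I expect this matching of the convergence mode to the epigraphical outer limit to be the only genuinely delicate point; everything else is a direct unravelling of the definitions.
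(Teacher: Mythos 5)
Your argument is correct. Note, however, that the paper offers no proof of Lemma~\ref{BD23}: it appears in the preliminaries among facts quoted as well known from \cite{Mordukhovich2018}, so there is no in-paper argument to compare against. Your derivation---transferring the robustness of the limiting normal cone (Lemma~\ref{BD21}) to the subdifferential through the epigraphical definition $\partial f(x) = \{u \mid (u,-1)\in N_{\mathrm{epi} f}(x,f(x))\}$, with the easy inclusion handled by constant sequences and the substantive one by passing normals $(u_k,-1)$ at the graph points $(x_k,f(x_k))$ to the limit---is exactly the standard proof of this fact, and your closing remarks correctly identify where lower semicontinuity (closedness of $\mathrm{epi} f$, so Lemma~\ref{BD21} applies) and the $f$-attentive convergence (so that the base points actually converge to $(\overline{x},f(\overline{x}))$) are used.
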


\begin{lemma}\label{SumRule}
Let $f_i \colon \mathbb{R}^n \to \overline{\mathbb{R}}$, $i=1,\dots,m$ with $m \geq 2$, be lower semicontinuous at $\overline{x} \in \mathbb{R}^n$  and let all but one of these functions be Lipschitz continuous around $\overline{x}.$ Then the following inclusions hold
\begin{eqnarray*}
\partial \left( f_1+\cdots+f_m\right)(\overline{x}) & \subset & \partial f_1(\overline{x})+\cdots+\partial f_m(\overline{x}), \\
\partial (\max f_i) (\overline{x}) & \subset & \bigcup \mathrm{co}(\{v_i \mid i \in I(\overline{x})\}), 
\end{eqnarray*}
where $I(\overline{x}) := \{ i \in \{1,\ldots,m\}\mid f_i(\overline{x})= \max_j f_j (\overline{x})\}$ and the union is taken over all vectors $v_i \in \partial f_i(\overline{x}) $ for $i \in I(\overline{x}).$
\end{lemma}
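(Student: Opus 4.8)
The plan is to derive both inclusions from the basic calculus of the limiting subdifferential, using the robustness property of Lemma~\ref{BD23} to reduce limiting statements to approximate (regular, Fr\'echet-level) ones. For the sum rule I would first reduce to the case $m=2$ by induction: since $f_1+\cdots+f_{m-1}$ is again lower semi-continuous (being a sum of one lsc function and Lipschitz ones), grouping it with $f_m$ and iterating yields the general case from the two-function statement. For $m=2$, writing the two functions as an lsc function $f$ and a function $g$ that is Lipschitz around $\overline{x}$, I would take $v \in \partial(f+g)(\overline{x})$ and, via Lemma~\ref{BD23}, approximate it by regular subgradients $v_k$ of $f+g$ at points $x_k \xrightarrow{f+g} \overline{x}$. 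At each such point the Lipschitz continuity of $g$ licenses a fuzzy sum rule, producing nearby points and regular subgradients of $f$ and of $g$ whose sum is close to $v_k$; the uniform Lipschitz bound keeps the $g$-subgradients bounded, so one may pass to a convergent subsequence and invoke robustness once more to place the two limits in $\partial f(\overline{x})$ and $\partial g(\overline{x})$.

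For the max rule I would exploit the identity $\mathrm{epi}(\max_i f_i) = \bigcap_{i=1}^m \mathrm{epi} f_i$, which turns the statement into an intersection rule for limiting normal cones to epigraphs at the point $(\overline{x}, \mu)$ with $\mu := \max_j f_j(\overline{x})$. For an inactive index $i \notin I(\overline{x})$ one has $f_i(\overline{x}) < \mu$, so by continuity $(\overline{x}, \mu)$ lies in the interior of $\mathrm{epi} f_i$ and contributes no normals; hence only the active epigraphs matter, and the normal-cone intersection rule yields $(v, -1) \in \sum_{i \in I(\overline{x})} N_{\mathrm{epi} f_i}(\overline{x}, f_i(\overline{x}))$. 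Writing each summand in the form $\lambda_i(w_i, -1)$ with $\lambda_i \ge 0$ and $w_i \in \partial f_i(\overline{x})$ --- the purely horizontal (last-coordinate-zero) normals being excluded by the Lipschitz hypothesis, which forces the singular subdifferentials to be trivial --- and matching last coordinates gives $\sum_{i \in I(\overline{x})} \lambda_i = 1$, whence $v = \sum_{i \in I(\overline{x})} \lambda_i w_i$ lies in the asserted union of convex hulls.

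The main obstacle in both parts is the same: the passage from the limiting level to a workable approximate level relies on a fuzzy sum rule (equivalently, an intersection rule for normal cones), whose proof rests on the extremal principle of variational analysis. The role of the hypothesis that all but one of the $f_i$ be locally Lipschitz is precisely to secure the qualification condition under which these calculus rules hold and the last coordinate is preserved: Lipschitz continuity guarantees that the corresponding singular (horizontal) subgradients vanish, so no degenerate normal directions can obstruct the decomposition or spoil the normalization $\sum_{i \in I(\overline{x})} \lambda_i = 1$. Controlling the boundedness and convergence of the approximating subgradient sequences, and verifying this qualification, is the technical heart of the argument; the reductions and bookkeeping indicated above are then routine.
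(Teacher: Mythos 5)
The first thing to say is that the paper does not prove Lemma~\ref{SumRule} at all: it is quoted as a well-known fact with a citation to Mordukhovich's book, so there is no in-paper proof to compare against. Judged on its own, your sketch of the sum rule is the standard textbook derivation: reduce to two functions by induction (grouping the single non-Lipschitz function with all but one of the Lipschitz ones), represent a limiting subgradient of $f+g$ as a limit of regular subgradients at nearby points, invoke the fuzzy sum rule there, use the Lipschitz modulus to keep the $g$-subgradients bounded, and pass to the limit. As an outline this is correct, with the caveats that the entire technical weight is carried by the fuzzy sum rule (equivalently the extremal principle), which you defer, and that Lemma~\ref{BD23} as stated in the paper has the limiting subdifferential on both sides, so you actually need the finer representation $\partial h(\overline{x}) = \Limsup_{x \xrightarrow{h} \overline{x}} \hat{\partial} h(x)$ in terms of regular subgradients; that representation is standard but is not literally what the paper records.

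The max-rule argument, however, has a genuine gap. Your step ``for an inactive index $i \notin I(\overline{x})$ one has $f_i(\overline{x}) < \mu$, so by continuity $(\overline{x}, \mu)$ lies in the interior of $\mathrm{epi}\, f_i$'' uses upper semicontinuity, which you only have for the Lipschitz functions; the one function that is merely lower semicontinuous can be inactive at $\overline{x}$ while $(\overline{x},\mu)$ still lies on the boundary of its epigraph with a nontrivial \emph{horizontal} normal cone. Concretely, take $n=1$, $f_1(0) := -1$ and $f_1(x) := \infty$ for $x \neq 0$, and $f_2 \equiv 0$: then $\max(f_1,f_2) = \delta_{\{0\}}$, $I(0) = \{2\}$, the left-hand side is $\partial \delta_{\{0\}}(0) = \mathbb{R}$, while the right-hand side is $\partial f_2(0) = \{0\}$. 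These horizontal normals are not ``excluded by the Lipschitz hypothesis'' as you assert --- the vanishing of singular subdifferentials only rules out horizontal normals at points \emph{of the graph}, not at epigraph points strictly above it --- and they simultaneously violate the qualification condition needed to apply the normal-cone intersection rule at $(\overline{x},\mu)$. So your argument (and, as the example shows, the stated inclusion itself) is only safe when the non-Lipschitz function is active at $\overline{x}$ or when all $f_i$ are Lipschitz, which is how the maximum rule is usually stated in the sources the paper cites. Since the paper only ever invokes the sum-rule half of this lemma, none of its results are affected, but the max-rule portion of your proof cannot be repaired as written without strengthening the hypotheses.
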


\begin{lemma}[Fermat rule] \label{FermatRule}
If a proper function $f \colon \mathbb{R}^n \to \overline{\mathbb{R}}$ has a local minimum at $\overline{x} \in \mathrm{dom} f,$  then $0 \in \partial f(\overline{x}).$
\end{lemma}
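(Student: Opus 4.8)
The plan is to unwind the two definitions on which the conclusion rests. By the definition of the limiting subdifferential, the assertion $0 \in \partial f(\overline{x})$ is equivalent to $(0,-1) \in N_{\mathrm{epi} f}(\overline{x}, f(\overline{x}))$, and by the definition of the limiting normal cone the latter amounts to producing a sequence $z_k \to (\overline{x}, f(\overline{x}))$ in $\mathbb{R}^n \times \mathbb{R}$ together with vectors $w_k \in \mathrm{cone}\big(z_k - \Pi_{\mathrm{epi} f}(z_k)\big)$ such that $w_k \to (0,-1).$ Thus the entire proof reduces to exhibiting one good approximating sequence, and note that no lower semicontinuity of $f$ is needed: the projector need only be nonempty at the points we use.

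The natural choice is to approach $(\overline{x}, f(\overline{x}))$ from directly below the graph. First I would invoke the local minimality of $\overline{x}$ to fix a radius $\delta > 0$ with $f(x) \ge f(\overline{x})$ for all $x \in \mathbb{B}_\delta(\overline{x}).$ Then I set $z_k := (\overline{x}, f(\overline{x}) - \varepsilon_k)$ with $\varepsilon_k \downarrow 0,$ so that $z_k \to (\overline{x}, f(\overline{x}))$ while $z_k \notin \mathrm{epi} f.$ The key claim is that for all large $k$ the point $(\overline{x}, f(\overline{x}))$ realises the distance from $z_k$ to $\mathrm{epi} f,$ that is, $(\overline{x}, f(\overline{x})) \in \Pi_{\mathrm{epi} f}(z_k)$ and $\mathrm{dist}(z_k, \mathrm{epi} f) = \varepsilon_k.$

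Granting the claim, the verification finishes at once: the difference $z_k - (\overline{x}, f(\overline{x})) = (0, -\varepsilon_k)$ lies in $z_k - \Pi_{\mathrm{epi} f}(z_k),$ so $(0,-1) = \varepsilon_k^{-1}(0,-\varepsilon_k)$ belongs to $\mathrm{cone}\big(z_k - \Pi_{\mathrm{epi} f}(z_k)\big).$ Taking the constant sequence $w_k := (0,-1)$ then gives $w_k \to (0,-1),$ and the outer-limit membership $(0,-1) \in N_{\mathrm{epi} f}(\overline{x}, f(\overline{x}))$ follows, hence $0 \in \partial f(\overline{x}).$

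The main obstacle, and the only place the hypothesis is genuinely used, is proving the claim, because $\Pi_{\mathrm{epi} f}$ is the \emph{global} Euclidean projector whereas local minimality only controls $f$ on $\mathbb{B}_\delta(\overline{x}).$ The plan here is a short triangle-inequality localisation. The upper bound $\mathrm{dist}(z_k,\mathrm{epi} f)\le \varepsilon_k$ is immediate since $(\overline{x},f(\overline{x}))\in\mathrm{epi} f.$ For the matching lower bound, any $(x,y)\in\mathrm{epi} f$ with $\|z_k-(x,y)\|<\varepsilon_k$ would satisfy $\|(x,y)-(\overline{x},f(\overline{x}))\|<2\varepsilon_k,$ which forces $x\in\mathbb{B}_\delta(\overline{x})$ as soon as $2\varepsilon_k<\delta,$ whence $y\ge f(x)\ge f(\overline{x})$ and therefore $\|z_k-(x,y)\|^2\ge \big(y-f(\overline{x})+\varepsilon_k\big)^2\ge \varepsilon_k^2,$ a contradiction. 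This pins the global projection to the local picture, establishes the claim, and completes the argument.
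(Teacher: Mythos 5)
Your proof is correct. The paper itself offers no proof of this lemma --- it is listed among facts quoted as well known, with a pointer to the literature (Mordukhovich's book) --- so there is no argument of the paper's to compare against; what you have supplied is a self-contained verification directly from the paper's projector-based definition of the limiting normal cone, and every step checks out. The one place that deserves scrutiny is the localisation of the global projection, and you handle it correctly: for $(x,y)\in\mathrm{epi} f$ with $\|z_k-(x,y)\|<\varepsilon_k$ the triangle inequality forces $x\in\mathbb{B}_\delta(\overline{x})$ once $2\varepsilon_k<\delta$, whence $y\ge f(x)\ge f(\overline{x})$ and $\|z_k-(x,y)\|^2\ge\big(y-f(\overline{x})+\varepsilon_k\big)^2\ge\varepsilon_k^2$, a contradiction; this pins $\mathrm{dist}(z_k,\mathrm{epi} f)=\varepsilon_k$ with the infimum attained at $(\overline{x},f(\overline{x}))$, so $(0,-\varepsilon_k)\in z_k-\Pi_{\mathrm{epi} f}(z_k)$ and $(0,-1)$ lies in the conic hull, giving the required membership in the outer limit. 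Your remark that lower semicontinuity (equivalently, closedness of $\mathrm{epi} f$) is not needed is also right, since you exhibit an explicit minimiser of the distance at the particular points $z_k$ rather than appealing to existence of projections onto a closed set. Compared with simply citing the textbook, your argument buys a transparent, definition-level proof valid for arbitrary proper extended-real-valued functions, at the cost of relying on the specific Euclidean-projector description of $N_\Omega$ rather than the Fr\'echet-subdifferential characterisation more commonly used for this result.
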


\begin{lemma} \label{BD26}
Let $f \colon \mathbb{R}^n \to \overline{\mathbb{R}}$ be Lipschitz continuous around $\overline{x} \in \mathbb{R}^n$ with modulus $L.$ Then $\partial f(\overline{x}) \ne \emptyset$ and
\begin{eqnarray*}
\|v\| &\le& L \quad \textrm{ for all } \quad v \in \partial f(\overline{x}).
\end{eqnarray*}
\end{lemma}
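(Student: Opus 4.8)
The plan is to reduce everything to the \emph{regular (Fr\'echet) subdifferential}
$$\widehat{\partial} f(x) := \Big\{u \in \mathbb{R}^n \ \Big| \ \liminf_{x' \to x} \frac{f(x') - f(x) - \langle u, x' - x\rangle}{\|x' - x\|} \ge 0\Big\},$$
and to use the known sequential representation $\partial f(\overline{x}) = \Limsup_{x \xrightarrow{f} \overline{x}} \widehat{\partial} f(x)$ (the analytic description of the limiting subdifferential alluded to in the remark following its definition; see \cite{Mordukhovich2006, Rockafellar1998}). With this in hand, both claims become statements about $\widehat{\partial} f$ at points near $\overline{x}$ together with a limiting argument.

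For the norm bound I would first control the regular subgradients. Fix a point $x$ in the neighborhood on which $f$ is Lipschitz with modulus $L$ and take $u \in \widehat{\partial} f(x)$. For any unit vector $d$ and small $t > 0$, inserting $x' = x + t d$ into the defining inequality and using $f(x + t d) - f(x) \le L t$ yields $\langle u, d\rangle \le L$; taking the supremum over unit $d$ gives $\|u\| \le L$. Since $f$ is Lipschitz with the \emph{same} modulus $L$ on a whole neighborhood of $\overline{x}$, this estimate holds uniformly at all nearby points, so it survives passage to the outer limit: any $v \in \partial f(\overline{x})$ is of the form $v = \lim_k u_k$ with $u_k \in \widehat{\partial} f(x_k)$ and $x_k \xrightarrow{f} \overline{x}$, whence $\|v\| = \lim_k \|u_k\| \le L$.

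For nonemptiness the crucial input is the density of points carrying regular subgradients: for a function that is lower semi-continuous and bounded below near $\overline{x}$ — which a locally Lipschitz $f$ automatically is, being continuous and hence bounded on a compact neighborhood — the set $\{x \mid \widehat{\partial} f(x) \ne \emptyset\}$ meets every neighborhood of $\overline{x}$, and one can arrange in addition that the function values converge to $f(\overline{x})$. This is established by a variational-principle argument: perturbing $f$ by a small multiple of $\|\cdot - \overline{x}\|^2$ and invoking Ekeland's variational principle (or the smooth variational principle) produces a nearby approximate minimizer at which the Fermat rule furnishes a regular subgradient. Choosing such $x_k \xrightarrow{f} \overline{x}$ and $u_k \in \widehat{\partial} f(x_k)$, the bound $\|u_k\| \le L$ makes $(u_k)$ bounded; extracting a convergent subsequence $u_k \to v$ exhibits $v \in \partial f(\overline{x})$ via the sequential representation, so $\partial f(\overline{x}) \ne \emptyset$.

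The main obstacle is nonemptiness, and specifically the variational-principle step that guarantees regular subgradients at points arbitrarily close to $\overline{x}$; the uniform norm estimate and its transfer to the limiting subdifferential are routine once the Lipschitz modulus $L$ is used on a full neighborhood rather than at a single point. I would expect to either prove the density fact in miniature or cite it directly from \cite{Mordukhovich2006, Rockafellar1998, Mordukhovich2018}.
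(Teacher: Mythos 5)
The paper offers no proof of this lemma: it is listed among the preliminaries as a well-known fact and cited to \cite{Mordukhovich2018}, so there is no in-paper argument to compare yours against line by line. Your proposal is a correct and complete rendition of the standard proof from the cited references: the estimate $\langle u, d\rangle \le L$ for regular subgradients at points in the neighborhood where the modulus $L$ is valid, its persistence under the outer limit in the sequential representation $\partial f(\overline{x}) = \Limsup_{x \xrightarrow{f} \overline{x}} \widehat{\partial} f(x)$, and the variational-principle argument producing nearby points with nonempty regular subdifferential are all sound (for the last step, minimizing $f + \tfrac{1}{\delta}\|\cdot - \overline{x}\|^2$ over a small closed ball and checking via the Lipschitz bound that the minimizer lies in the interior does the job). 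One small remark: for a locally Lipschitz $f$ the nonemptiness step you flag as the main obstacle can be short-circuited by Rademacher's theorem --- $f$ is differentiable on a set of full measure, hence at points $x_k \to \overline{x}$ with $\widehat{\partial} f(x_k) = \{\nabla f(x_k)\}$ and $\|\nabla f(x_k)\| \le L$, and a convergent subsequence of these gradients already yields an element of $\partial f(\overline{x})$, making the Ekeland step unnecessary in this Lipschitz setting.
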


\begin{lemma}[chain rule] \label{ChainRule}
Let $g \colon \mathbb{R}^n \to {\mathbb{R}}^m$ be Lipschitz continuous around $\overline{x} \in \mathbb{R}^n$ and $h \colon \mathbb{R}^m \to {\mathbb{R}}$ be Lipschitz continuous around $\overline{y} := g(\overline{x}) \in \mathbb{R}^m.$ Then
\begin{eqnarray*}
\partial (h \circ g) (\overline{x}) &\subset& \bigcup_{w \in \partial h(\overline{y})} \partial \langle w, g\rangle (\overline{x}).
\end{eqnarray*}
\end{lemma}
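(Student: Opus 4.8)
The plan is to pass from the composition to a purely geometric statement about epigraphs and then to invoke the coderivative of the (Lipschitz) inner mapping $g$ together with the scalarization formula. First I would fix $u \in \partial(h\circ g)(\bar x)$ and set $\bar\alpha := h(g(\bar x))$; by the definition of the limiting subdifferential this means $(u,-1) \in N_{\mathrm{epi}(h\circ g)}(\bar x,\bar\alpha)$. The key observation is that the epigraph of the composition is an inverse image: writing $G \colon \mathbb{R}^n\times\mathbb{R} \to \mathbb{R}^m\times\mathbb{R}$, $G(x,\alpha):=(g(x),\alpha)$, one has $\mathrm{epi}(h\circ g)=G^{-1}(\mathrm{epi}\, h)$. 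Since $g$ is Lipschitz around $\bar x$, the map $G$ is Lipschitz around $(\bar x,\bar\alpha)$, so the task reduces to describing the normal cone to an inverse image under a Lipschitz map.

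Next I would establish the inverse-image inclusion $N_{G^{-1}(\mathrm{epi}\, h)}(\bar x,\bar\alpha)\subset D^*G(\bar x,\bar\alpha)\big(N_{\mathrm{epi}\, h}(G(\bar x,\bar\alpha))\big)$ under the qualification that $0 \in D^*G(\bar x,\bar\alpha)(w,\mu)$ with $(w,\mu)\in N_{\mathrm{epi}\, h}(\bar y,\bar\alpha)$, where $\bar y:=g(\bar x)$, forces $(w,\mu)=0$. The decisive point is that this qualification is automatic here: because $h$ is Lipschitz around $\bar y$, every horizon normal $(w,0)\in N_{\mathrm{epi}\, h}(\bar y,\bar\alpha)$ has $w=0$, which reflects the boundedness of subgradients of Lipschitz functions (Lemma~\ref{BD26}), so the qualification cannot fail. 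Proving the inclusion itself is the heart of the matter: one approximates a normal to the inverse image by generating cones of projections, transports them back through $G$, and passes to the limit using the robustness of the normal cone (Lemma~\ref{BD21}) and of the subdifferential (Lemma~\ref{BD23}); in finite dimensions this is where the exact sum rule of Lemma~\ref{SumRule} does the real work.

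It remains to compute the coderivative of $G$ and conclude. Because the last coordinate passes through $G$ unchanged and $g$ is Lipschitz, the scalarization formula for Lipschitz mappings gives $D^*G(\bar x,\bar\alpha)(w,\mu)=\partial\langle w,g\rangle(\bar x)\times\{\mu\}$, where $\langle w,g\rangle$ denotes the scalar function $x\mapsto\langle w,g(x)\rangle$. Feeding $(u,-1)$ into the inverse-image inclusion produces some $(w,\mu)\in N_{\mathrm{epi}\, h}(\bar y,\bar\alpha)$ with $(u,-1)\in\partial\langle w,g\rangle(\bar x)\times\{\mu\}$; hence $\mu=-1$ and $u\in\partial\langle w,g\rangle(\bar x)$. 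Finally $(w,-1)\in N_{\mathrm{epi}\, h}(\bar y,\bar\alpha)$ is, by the very definition of $\partial h$, precisely the statement $w\in\partial h(\bar y)$, and the desired inclusion $\partial(h\circ g)(\bar x)\subset\bigcup_{w\in\partial h(\bar y)}\partial\langle w,g\rangle(\bar x)$ follows.

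The main obstacle is the second step: both the inverse-image normal-cone rule and the scalarization identity $D^*g(\bar x)(w)=\partial\langle w,g\rangle(\bar x)$ require genuine limiting calculus rather than mere bookkeeping. I expect to either cite these from \cite{Mordukhovich2006, Mordukhovich2018} or to re-derive the scalarization from Lemma~\ref{SumRule} applied to $\langle w,g\rangle=\sum_i w_i g_i$ combined with the boundedness in Lemma~\ref{BD26}. Checking that the Lipschitz continuity of $h$ trivializes the constraint qualification is the conceptual crux; once it is in place, the rest is routine tracking of the penalty coordinate $-1$ through $G$.
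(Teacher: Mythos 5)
The paper offers no proof of this lemma: it appears in the block of facts quoted as well known from \cite{Mordukhovich2018}, so there is no in-paper argument to compare yours against. On its own terms, your outline is the standard derivation of Mordukhovich's nonsmooth chain rule, and its skeleton is sound: the identity $\mathrm{epi}(h\circ g)=G^{-1}(\mathrm{epi}\,h)$ with $G(x,\alpha):=(g(x),\alpha)$ is correct; the computation $D^*G(\overline{x},\overline{\alpha})(w,\mu)=\partial\langle w,g\rangle(\overline{x})\times\{\mu\}$ follows from Lemma~\ref{BD28} together with separability of $(x,\alpha)\mapsto\langle w,g(x)\rangle+\mu\alpha$; the qualification condition is indeed automatic because a function Lipschitz around $\overline{y}$ has trivial singular subdifferential, so $(w,0)\in N_{\mathrm{epi}\,h}(\overline{y},h(\overline{y}))$ forces $w=0$; and the tracking of the $-1$ coordinate correctly converts the epigraphical normal back into $w\in\partial h(\overline{y})$ and $u\in\partial\langle w,g\rangle(\overline{x})$.

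The caveat is that the entire difficulty is concentrated in the preimage rule $N_{G^{-1}(\Theta)}(\overline{z})\subset\bigcup_{w\in N_\Theta(G(\overline{z}))}D^*G(\overline{z})(w)$ under the kernel qualification, and your description of how to prove it (``approximate by generating cones of projections, transport back through $G$, pass to the limit'' using Lemmas~\ref{BD21}, \ref{BD23} and \ref{SumRule}) is not a proof and would not succeed as stated: the transport step is precisely where the extremal principle or an equivalent fuzzy intersection rule is needed, and that result is of essentially the same depth as the chain rule itself. Similarly, the scalarization identity cannot be re-derived from Lemma~\ref{SumRule} applied to $\langle w,g\rangle=\sum_i w_ig_i$ --- the sum rule only gives an upper inclusion for $\partial\langle w,g\rangle$, which is the wrong direction for identifying it with $D^*g(\overline{x})(w)$; you should simply use Lemma~\ref{BD28}, which the paper already supplies. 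If you cite the preimage normal-cone rule from \cite{Mordukhovich2006,Mordukhovich2018}, as you offer to do, the argument is complete and correct --- which is, in effect, exactly what the paper does for the whole lemma.
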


Finally, the next lemma expresses the coderivative of a single-valued Lipschitz mapping $g \colon \mathbb{R}^n \to {\mathbb{R}}^m$ via the subdifferential of the scalarization $\langle w, g\rangle \colon \mathbb{R}^n \to {\mathbb{R}}, x \mapsto \langle w, g(x)\rangle.$ 

\begin{lemma} \label{BD28}
Let $g \colon \mathbb{R}^n \to {\mathbb{R}}^m$ be Lipschitz continuous around $\overline{x} \in \mathbb{R}^n.$ Then
\begin{eqnarray*}
D^* g(\overline{x})(w) &=& \partial \langle w, g\rangle (\overline{x}) \quad \textrm{ for all } \quad w \in \mathbb{R}^m.
\end{eqnarray*}
\end{lemma}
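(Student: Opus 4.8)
The plan is to prove the two inclusions separately after reducing the identity to a pointwise statement about regular (Fr\'echet) normals and subgradients, which is where the Lipschitz hypothesis does the real work. Write $\overline{y} := g(\overline{x})$ and denote by $\widehat{N}$ and $\widehat{\partial}$ the regular normal cone and regular subdifferential, whose analytic difference-quotient descriptions (and the representations of the limiting objects as outer limits of them) are the ones recalled in the references cited in Section~\ref{Section2}. The central observation I would establish is the equivalence
\begin{eqnarray*}
u \in \widehat{\partial} \langle w, g\rangle(x) &\iff& (u, -w) \in \widehat{N}_{\mathrm{gph}\, g}\big(x, g(x)\big), \qquad (\star)
\end{eqnarray*}
valid at every $x$ near $\overline{x}$. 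To see this, note that the defining inequalities on both sides involve the very same expression $\langle u, x'-x\rangle - \langle w, g(x')-g(x)\rangle$, measured against $\|x'-x\|$ on the left and against $\|(x'-x,\,g(x')-g(x))\|$ on the right; since $g$ is Lipschitz with modulus $L$, one has $\|x'-x\| \le \|(x'-x,\,g(x')-g(x))\| \le \sqrt{1+L^2}\,\|x'-x\|$, so the two ``little-$o$'' terms coincide and the conditions are equivalent.

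Granting $(\star)$, the inclusion $\partial\langle w, g\rangle(\overline{x}) \subset D^*g(\overline{x})(w)$ follows immediately by passing to the limit: if $u = \lim u_k$ with $u_k \in \widehat{\partial}\langle w, g\rangle(x_k)$ and $x_k \to \overline{x}$, then $(u_k, -w) \in \widehat{N}_{\mathrm{gph}\,g}(x_k, g(x_k))$ with $(x_k, g(x_k)) \to (\overline{x}, \overline{y})$ by continuity of $g$, whence $(u, -w) \in N_{\mathrm{gph}\,g}(\overline{x}, \overline{y})$, that is, $u \in D^*g(\overline{x})(w)$.

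The reverse inclusion is the harder one, because when $(u, -w) \in N_{\mathrm{gph}\,g}(\overline{x}, \overline{y})$ is unfolded as an outer limit one obtains regular normals $(u_k, -w_k)$ with $w_k \to w$ rather than a fixed second coordinate. By $(\star)$ this gives $u_k \in \widehat{\partial}\langle w_k, g\rangle(x_k) \subset \partial\langle w_k, g\rangle(x_k)$, and I would absorb the drift $w_k \to w$ using the sum rule and the Lipschitz bound: writing $\langle w_k, g\rangle = \langle w, g\rangle + \langle w_k - w, g\rangle$ and invoking Lemma~\ref{SumRule} together with Lemma~\ref{BD26} yields
\begin{eqnarray*}
\partial\langle w_k, g\rangle(x_k) &\subset& \partial\langle w, g\rangle(x_k) + L\|w_k - w\|\,\mathbb{B},
\end{eqnarray*}
so there exist $\widetilde{u}_k \in \partial\langle w, g\rangle(x_k)$ with $\|u_k - \widetilde{u}_k\| \le L\|w_k - w\| \to 0$, hence $\widetilde{u}_k \to u$. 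Since $x_k \to \overline{x}$ and $\langle w, g(x_k)\rangle \to \langle w, g(\overline{x})\rangle$ by continuity, the robustness of the limiting subdifferential (Lemma~\ref{BD23}) gives $u \in \partial\langle w, g\rangle(\overline{x})$, which finishes the argument.

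I expect the reduction $(\star)$ and the control of the perturbation $w_k \to w$ in the last step to be the only genuine obstacles; the Lipschitz assumption enters essentially in both, once through the norm comparison that makes the little-$o$ terms match and once through the bound on $\partial\langle w_k - w, g\rangle$. Everything else is a routine application of the outer-limit definitions of the limiting normal cone and subdifferential together with Lemmas~\ref{BD23}, \ref{SumRule} and~\ref{BD26}.
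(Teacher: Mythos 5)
Your proof is correct. The paper does not actually prove Lemma~\ref{BD28} at all---it is listed among the facts quoted from \cite{Mordukhovich2006, Mordukhovich2018}---and your argument is precisely the standard one from those references: the pointwise equivalence of regular subgradients of the scalarization with regular normals to the graph, obtained from the two-sided comparison $\|x'-x\| \le \|(x'-x,\,g(x')-g(x))\| \le \sqrt{1+L^2}\,\|x'-x\|$, followed by the limiting passage in which the drift $w_k \to w$ is absorbed via the sum rule and the Lipschitz bound $\partial\langle w_k-w, g\rangle(x_k) \subset L\|w_k-w\|\,\mathbb{B}$ together with the robustness property of Lemma~\ref{BD23}.
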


\subsection{Semi-algebraic geometry}
Now, we recall some notions and results of semi-algebraic geometry, which can be found in \cite{Benedetti1990, Bochnak1998, HaHV2017}.

\begin{definition}{\rm
A subset $S$ of $\mathbb{R}^n$ is {\em semi-algebraic} if it is a finite union of sets of the form
$$\{x \in \mathbb{R}^n \ | \  f_i(x) = 0, \ i = 1, \ldots, k; f_j(x) > 0, \ j = k + 1, \ldots, p\},$$
where all $f_{i}$ are polynomials.
In other words, $S$ is a union of finitely many sets, each defined by finitely many polynomial equalities and inequalities.

A function $f \colon S \rightarrow {\mathbb{R} \cup \{\infty\}}$ is said to be {\em semi-algebraic} if its graph
\begin{eqnarray*}
\{ (x, y) \in S \times \mathbb{R} \ | \ y =  f(x) \}
\end{eqnarray*}
is a semi-algebraic set.
}\end{definition}

\begin{example}{\rm
Each polyhedral set is the intersection of a finite number of half-spaces, and so is semi-algebraic.
}\end{example}

A major fact concerning the class of semi-algebraic sets is its stability under linear projections.

\begin{theorem}[Tarski--Seidenberg theorem] \label{TarskiSeidenbergTheorem}
The image of any semi-algebraic set $S \subset \mathbb{R}^n$ under a projection to any linear subspace of $\mathbb{R}^n$ is a semi-algebraic set.
\end{theorem}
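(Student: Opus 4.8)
The plan is to prove the result by induction, reducing everything to the single essential case: showing that the coordinate projection $\pi \colon \mathbb{R}^{n+1} \to \mathbb{R}^{n}$, $(x,t) \mapsto x$, sends semi-algebraic sets to semi-algebraic sets. Indeed, projection onto an arbitrary linear subspace $V$ can be arranged, after an orthogonal change of coordinates that makes $V$ a coordinate subspace, to be a composition of coordinate projections each eliminating one variable at a time. A linear isomorphism and its inverse are polynomial maps, so such a change of coordinates carries semi-algebraic sets to semi-algebraic sets; hence it suffices to eliminate a single variable.

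For the one-variable case, I would write the semi-algebraic set $S \subset \mathbb{R}^{n+1}$ as a finite Boolean combination of conditions of the form $P_i(x,t) = 0$, $P_i(x,t) > 0$, or $P_i(x,t) < 0$, where $P_1,\ldots,P_r \in \mathbb{R}[x_1,\ldots,x_n][t]$ are regarded as polynomials in $t$ whose coefficients are polynomials in $x$. Then $x \in \pi(S)$ if and only if there exists $t \in \mathbb{R}$ realizing one of the sign patterns $\varepsilon \in \{-1,0,1\}^r$ consistent with the defining Boolean combination. So it is enough to prove the following combinatorial statement: for each family $P_1,\ldots,P_r$ there is a finite partition of the parameter space $\mathbb{R}^n_x$ into semi-algebraic pieces on which the entire configuration is constant, namely the number of real roots of the $P_i$, their relative order, and the sign of each $P_j$ at every root and on every open interval between consecutive roots. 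Granting this, the set of $x$ admitting a prescribed sign pattern for some $t$ is a union of finitely many of these pieces, hence semi-algebraic, and therefore so is $\pi(S)$.

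The core lemma I would establish by enlarging the family $\{P_i\}$ to one closed under the operations that govern root behaviour, namely by adjoining the $t$-derivatives $\partial P_i/\partial t$ together with the successive remainders and subresultants arising from Euclidean division in $t$, and then invoking Sturm-type theory. The number of real roots of a polynomial in a prescribed interval, and the signs that the remaining polynomials take at those roots, are computable from the signs of the leading coefficients and of the subresultant coefficients, all of which are themselves polynomials in $x$. Consequently the combinatorial type is locally constant precisely on the semi-algebraic strata cut out by the sign conditions of these finitely many auxiliary polynomials in $x$; since there are only finitely many sign patterns, the resulting partition is finite.

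The main obstacle is the degeneration of degrees under specialization: as $x$ varies, the leading coefficient of some $P_i(x,\cdot)$ in $t$ may vanish, so that its degree in $t$ drops and the root structure changes discontinuously. Controlling this requires partitioning $\mathbb{R}^n_x$ according to which leading and subresultant coefficients vanish and treating each stratum separately, verifying that the augmented family remains adequate and that all the relevant Sturm/subresultant data specializes to genuinely semi-algebraic conditions on $x$. This parametrized bookkeeping is the technical heart of the argument; once it is in place, the inductive reassembly up to a general linear projection is immediate from the reduction in the first paragraph.
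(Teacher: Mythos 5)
The paper does not prove this theorem at all: it is stated as a classical fact and referred to the standard references (Benedetti--Risler, Bochnak--Coste--Roy, H\`a--Ph\d{a}m), so there is no ``paper's proof'' to compare against. Your outline is precisely the classical route taken in those references: reduce, via an orthogonal (hence polynomial, with polynomial inverse) change of coordinates, to eliminating one variable at a time; observe that $x\in\pi(S)$ iff some admissible sign vector of $(P_1(x,\cdot),\dots,P_r(x,\cdot))$ is realized by some $t$; and stratify the parameter space $\mathbb{R}^n_x$ so that the whole root-and-sign configuration in $t$ is constant on each stratum, using Sturm sequences and subresultants whose coefficients are polynomials in $x$. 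The reduction steps and the handling of degree drops under specialization are correctly identified, and between consecutive roots of the \emph{whole} family each $P_j$ indeed has constant sign, so the combinatorial description of $\pi(S)$ as a finite union of strata is sound.

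The one substantive criticism is that the core lemma --- that the Sturm/subresultant data yields a \emph{finite semi-algebraic} partition of $\mathbb{R}^n_x$ on which the configuration is constant, uniformly over all the degenerate strata where leading or subresultant coefficients vanish --- is asserted rather than proved, and you acknowledge as much. That lemma is not bookkeeping in the pejorative sense: it is the entire content of the theorem (it is, in effect, quantifier elimination for real closed fields in the one-existential-quantifier case), and every known proof spends essentially all of its effort there. As written, your text is therefore a correct and well-organized proof \emph{strategy}, matching the standard one, but not yet a proof; to complete it you would need to carry out the parametrized Sturm--Tarski argument (e.g.\ as in Bochnak--Coste--Roy, Chapter~1--2, or H\"ormander's appendix), including a precise statement of which auxiliary polynomials in $x$ are adjoined and why their sign conditions determine the number and ordering of real roots and the signs of all $P_j$ at and between those roots.
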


\begin{remark}{\rm
As an immediate consequence of the Tarski--Seidenberg theorem, we get the semi-algebraicity of any set $\{ x \in A \ | \  \exists y \in B,  (x, y) \in C \},$  provided that $A ,  B,$  and $C$  are semi-algebraic sets in the corresponding spaces. Also, $\{ x \in A \ | \  \forall y \in B,  (x, y) \in C \}$ is a semi-algebraic set as its complement is the union of the complement of $A$  and the set $\{ x \in A \ | \  \exists y \in B,  (x, y) \not\in C \}.$
Thus, if we have a finite collection of semi-algebraic sets, then any set obtained from them with the help of a finite chain of quantifiers on variables is also semi-algebraic.
}\end{remark}

The following well known lemmas will be of great importance for us (see, for example, \cite{Benedetti1990, Bochnak1998, HaHV2017}).

\begin{lemma}\label{FiniteLemma}
Every semi-algebraic set in $\mathbb{R}$ is a finite union of points and intervals.
\end{lemma}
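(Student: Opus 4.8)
The plan is to reduce to a single basic semi-algebraic set and then exploit the fact that a univariate polynomial has only finitely many real roots. By definition, a semi-algebraic set $S \subset \mathbb{R}$ is a finite union of sets of the form $\{x \in \mathbb{R} \mid f_i(x) = 0, \ i \le k; \ f_j(x) > 0, \ j > k\}$ with all $f_i$ polynomials. Since a finite union of finite unions of points and intervals is again a finite union of points and intervals, it suffices to establish the conclusion for one such basic set $B$.

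First I would dispose of the degenerate cases. If some polynomial that is required to satisfy a strict inequality $f_j > 0$ is identically zero, then $B = \emptyset$, which is vacuously of the desired form. After deleting any identically-zero polynomial from among the equality constraints (such a constraint imposes no condition on $x$), I may assume that every polynomial $f_i$ actually appearing in the description of $B$ is a nonzero polynomial. Let $R \subset \mathbb{R}$ be the union of the sets of real roots of all these polynomials. Each nonzero univariate polynomial has at most $\deg f_i$ real roots, so $R$ is finite; write $R = \{r_1 < \cdots < r_m\}$.

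Next, the points of $R$ partition the line into the $m$ singletons $r_1, \ldots, r_m$ together with the $m+1$ open intervals $(-\infty, r_1), (r_1, r_2), \ldots, (r_m, +\infty)$. On each of these open intervals none of the polynomials $f_i$ vanishes, so by continuity and the intermediate value theorem each $f_i$ retains a constant nonzero sign throughout the interval. Consequently the truth value of every defining condition $f_i(x) = 0$ and $f_j(x) > 0$ is constant on each open interval, so membership in $B$ is constant there; membership is moreover decided pointwise at each $r_\ell$. Hence $B$ is precisely the union of those open intervals on which all the conditions hold simultaneously, together with those root points $r_\ell$ that happen to lie in $B$, and this is a finite union of points and intervals. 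Taking the finite union over the basic sets that comprise $S$ yields the claim.

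I do not anticipate a genuine obstacle here: the result is elementary, and its entire content is that finitely many roots cut $\mathbb{R}$ into finitely many pieces on each of which all the signs are frozen, which is exactly where the required finiteness originates. The only mildly delicate part is the bookkeeping around the degenerate configurations (identically-zero polynomials, empty basic sets, and the two unbounded end intervals); once these are handled, the sign-constancy argument is immediate.
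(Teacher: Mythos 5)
Your argument is correct and is the standard proof of this fact: reduce to a basic set, observe that the finitely many nonzero defining polynomials have finitely many real roots, and use sign-constancy on the complementary open intervals. The paper itself offers no proof --- it states the lemma as a well-known result from real algebraic geometry with references to \cite{Benedetti1990, Bochnak1998, HaHV2017} --- so there is nothing to compare against; your writeup, including the handling of the degenerate cases (identically zero polynomials and empty basic sets), is complete and would serve as a self-contained justification.
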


\begin{lemma}[curve selection lemma at infinity]\label{CurveSelectionLemma}
Let $A\subset \mathbb{R}^n$ be a semi-algebraic set, and let $f := (f_1, \ldots,f_p) \colon  \mathbb{R}^n \to \mathbb{R}^p$ be a semi-algebraic map. Assume that there exists a sequence $\{x^\ell\}$ such that $x^\ell \in A$, $\lim_{l \to \infty} \| x^\ell  \| = \infty$ and $\lim_{l \to \infty} f(x^\ell)  = y \in(\overline{\mathbb{R}})^p,$ where $\overline{\mathbb{R}} := \mathbb{R} \cup \{\pm \infty\}.$ Then there exists a smooth semi-algebraic curve $\varphi \colon (0, \epsilon)\to \mathbb{R}^n$ such that $\varphi(t) \in A$ for all $t \in (0, \epsilon), \lim_{t \to 0} \|\varphi(t)\| = \infty,$ and $\lim_{t \to 0} f(\varphi(t)) = y.$
\end{lemma}

\begin{lemma}[growth dichotomy lemma] \label{GrowthDichotomyLemma}
\begin{enumerate}[\upshape (i)]
\item Let $f \colon (0, \epsilon) \rightarrow {\mathbb R}$ be a semi-algebraic function with $f(t) \ne 0$ for all $t \in (0, \epsilon).$ Then there exist constants $a \ne 0$ and $\alpha \in {\mathbb Q}$ such that $f(t) = at^{\alpha} + o(t^{\alpha})$ as $t \to 0^+.$

\item Let $f \colon (r, +\infty) \rightarrow {\mathbb R}$ be a semi-algebraic function with $f(t) \ne 0$ for all $t \in (r, +\infty).$ Then there exist constants $b \ne 0$ and $\beta \in {\mathbb Q}$ such that $f(t) = bt^{\beta} + o(t^{\beta})$ as $t \to +\infty.$
\end{enumerate}
\end{lemma}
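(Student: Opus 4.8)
The plan is to prove (i) directly and then deduce (ii) by the change of variable $s = 1/t$. For (ii), set $\tilde f(s) := f(1/s)$ for $s \in (0, 1/r)$; this is semi-algebraic (its graph is the image of $\mathrm{gph}\, f$ under the semi-algebraic map $(t,y)\mapsto (1/t, y)$, and semi-algebraicity is preserved by Theorem~\ref{TarskiSeidenbergTheorem}) and nowhere zero. An application of (i) gives $\tilde f(s) = b\, s^{\gamma} + o(s^{\gamma})$ as $s \to 0^+$ with $b \ne 0$ and $\gamma \in \mathbb{Q}$; substituting $s = 1/t$ and letting $t \to +\infty$ yields $f(t) = b\, t^{-\gamma} + o(t^{-\gamma})$, so $\beta := -\gamma$ works. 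Hence everything reduces to (i).

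For (i), the key idea is that a one-variable semi-algebraic function is an algebraic function near the endpoint, so its asymptotics are governed by a Puiseux expansion. First I would shrink $\epsilon$ so that $f$ is continuous and of constant sign on $(0, \epsilon)$: the sets $\{t : f(t) > 0\}$ and $\{t : f(t) < 0\}$ are semi-algebraic subsets of $\mathbb{R}$, hence finite unions of points and intervals by Lemma~\ref{FiniteLemma}, and since $f$ is nowhere zero one of them contains an interval $(0, \epsilon')$; continuity off a finite set is likewise a consequence of the cell decomposition of the semi-algebraic graph, so after a further shrinking $f$ is continuous on $(0, \epsilon')$.

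Next I would produce a nonzero polynomial annihilating $f$. The graph $\Gamma := \mathrm{gph}\, f$ is a semi-algebraic subset of $\mathbb{R}^2$ of dimension one (it cannot have dimension two, being a graph, nor dimension zero, being defined over an interval). Its Zariski closure $V$ is therefore a proper algebraic subset of $\mathbb{R}^2$, so the ideal of polynomials vanishing on $V$ is nonzero; choosing $0 \ne P \in \mathbb{R}[t, y]$ with $P|_V = 0$ we obtain $P(t, f(t)) = 0$ for all $t \in (0, \epsilon')$. Writing $P$ as a polynomial in $y$ with coefficients in $\mathbb{R}[t]$, the continuous branch $t \mapsto f(t)$ is a real root of $P(t, \cdot)$ near $t = 0^+$, and the Newton--Puiseux theorem expresses every such branch as a convergent Puiseux series $f(t) = \sum_{k \ge k_0} a_k t^{k/N}$ on some interval $(0, \epsilon'')$, where $N \in \mathbb{N}$ and the $a_k \in \mathbb{R}$.

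Finally, because $f(t) \ne 0$ on $(0, \epsilon)$, the series is not identically zero; let $a_{k_0}$ be its first nonzero coefficient and put $a := a_{k_0} \ne 0$ and $\alpha := k_0/N \in \mathbb{Q}$. Since every later term $a_k t^{k/N}$ with $k > k_0$ satisfies $t^{k/N}/t^\alpha = t^{(k-k_0)/N} \to 0$ as $t \to 0^+$, the tail is $o(t^\alpha)$ and we conclude $f(t) = a\, t^\alpha + o(t^\alpha)$, as claimed. I expect the main obstacle to be the passage from \emph{semi-algebraic} to \emph{algebraic}, namely justifying the annihilating polynomial $P$ and invoking a real form of the Newton--Puiseux theorem so that the chosen real branch is exactly $f$; the continuity and constant-sign reduction is what guarantees that $f$ follows a single Puiseux branch rather than jumping between roots of $P$.
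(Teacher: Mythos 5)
The paper does not prove this lemma at all: it is stated as a well-known fact with a pointer to the references \cite{Benedetti1990, Bochnak1998, HaHV2017}, so there is no in-paper argument to compare against. Your proposal is essentially the standard proof found in those references, and it is correct in outline. The reduction of (ii) to (i) via $s=1/t$ is fine (Tarski--Seidenberg gives semi-algebraicity of $\tilde f$). For (i), two remarks. First, you do not need the constant-sign reduction, and the continuity step is already available from the paper's own Lemma~\ref{MonotonicityLemma}, which gives analyticity of $f$ on a subinterval $(0,\epsilon')$ after shrinking; there is no need to invoke cell decomposition separately. Second, the two points you flag as the crux are indeed the only nontrivial inputs, and both are standard: (a) the existence of a nonzero $P\in\mathbb{R}[t,y]$ vanishing on $\mathrm{gph}\,f$ follows either from the fact that a semi-algebraic set and its Zariski closure have the same dimension, or more elementarily from the definition of semi-algebraic sets --- any piece of the graph described only by strict inequalities would be open in $\mathbb{R}^2$, which a graph cannot contain, so each piece carries a nontrivial polynomial equation and the product of these annihilates $f$; (b) the real form of Newton--Puiseux is obtained from the complex one by observing that the branch followed by the continuous real-valued $f$ takes real values on an interval, forcing its coefficients to be real. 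One small cleanup you omit: before speaking of ``branches'' you should divide out of $P$ any nonconstant factor lying in $\mathbb{R}[t]$ alone and shrink $\epsilon$ past the zeros of the discriminant and leading coefficient of $P$ in $y$, so that the real roots of $P(t,\cdot)$ form finitely many non-colliding continuous branches and continuity of $f$ pins it to a single one. With these routine additions the argument is complete.
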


\begin{lemma}[monotonicity lemma]\label{MonotonicityLemma}
Let $f\colon (a, b) \to \mathbb{R}$ be a semi-algebraic function.
Then there are $a = a_0 < a_1 < \cdots< a_s < a_{s+1} = b$ such that, for each $i = 0,\ldots, s,$ the restriction $f|_{(a_i,a_{i+1})}$ is analytic and either constant, strictly increasing, or strictly decreasing.
\end{lemma}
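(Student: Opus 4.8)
The plan is to reduce both assertions---piecewise analyticity and piecewise strict monotonicity---to the one-variable structure theorem, Lemma~\ref{FiniteLemma}, which is the only finiteness engine available. First I would record the basic geometry: the graph $\Gamma := \{(t, f(t)) \mid t \in (a,b)\}$ is a semi-algebraic subset of $\mathbb{R}^2$ of dimension one, so it is contained in the zero set of some nonzero polynomial. Among all nonzero $P \in \mathbb{R}[x,y]$ with $P(t, f(t)) = 0$ for all $t \in (a,b)$, I would fix one of minimal degree $d := \deg_y P$ in the variable $y$. Since $f$ is single-valued, no nonzero polynomial in $x$ alone can vanish on $\Gamma$ (it would have infinitely many roots in $(a,b)$), so $d \ge 1$, and I write $\partial_y P$ for $\partial P/\partial y$.

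Next I would isolate a finite exceptional set off which $f$ is well behaved. Because $\deg_y(\partial_y P) = d-1 < d$ and $\partial_y P \ne 0$ as a polynomial (as $d \ge 1$), minimality of $d$ forces $\partial_y P$ not to vanish identically on $\Gamma$; hence $\{t \in (a,b) \mid \partial_y P(t, f(t)) = 0\}$ is a proper semi-algebraic subset of the interval and therefore finite by Lemma~\ref{FiniteLemma}. I would then adjoin the set $D$ of discontinuity points of $f$, which is semi-algebraic, and argue that $D$ is likewise finite: it contains no subinterval because over any small interval the value $f(t)$ is one of the at most $d$ roots of $P(t, \cdot) = 0$, and these roots depend continuously on $t$ away from the finitely many zeros of the discriminant $\mathrm{disc}_y P \in \mathbb{R}[x]$; thus $D$ is proper semi-algebraic, hence finite by Lemma~\ref{FiniteLemma}. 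Let $E$ denote the (finite) union of these two sets.

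On $(a,b) \setminus E$ the function $f$ is continuous and satisfies $\partial_y P(t, f(t)) \ne 0$, so the analytic implicit function theorem applied to the polynomial $P$ produces, near each such point, a unique analytic branch solving $P(t, y) = 0$ through $(t_0, f(t_0))$; continuity pins $f$ to this branch, whence $f$ is analytic on $(a,b) \setminus E$, with $f'(t) = -\partial_x P(t,f(t))/\partial_y P(t,f(t))$ manifestly semi-algebraic. The three sets $\{t : f'(t) > 0\}$, $\{t : f'(t) < 0\}$ and $\{t : f'(t) = 0\}$ are then semi-algebraic subsets of $\mathbb{R}$, so by Lemma~\ref{FiniteLemma} each is a finite union of points and open intervals. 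Collecting the finitely many endpoints of these intervals together with the finite set $E$ yields a partition $a = a_0 < a_1 < \cdots < a_{s+1} = b$ on whose open pieces $f'$ has constant sign; since $f$ is analytic there, this means $f$ is constant, strictly increasing, or strictly decreasing on each piece, as claimed.

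The hard part will be the analyticity step and, more precisely, certifying that the exceptional set $E$ is genuinely finite. The critical-point contribution is controlled cheaply by the minimal-degree choice of $P$, but showing that $f$ has no subinterval of discontinuities---so that $D$ is proper and Lemma~\ref{FiniteLemma} applies---requires the continuous dependence of the roots of $P(t,\cdot)$ on the parameter $t$ and the finiteness of the discriminant locus, and it is here that one must be careful to avoid circularity with the conclusion being proved. Once local analyticity off a finite set is secured, the monotonicity conclusion is routine bookkeeping: differentiation preserves semi-algebraicity and Lemma~\ref{FiniteLemma} converts constancy of the sign of $f'$ into the desired piecewise behaviour.
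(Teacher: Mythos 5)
The paper offers no proof of this lemma at all---it is quoted as a standard fact with pointers to Benedetti--Risler, Bochnak--Coste--Roy and H\`a--Ph\d{a}m---so there is no in-paper argument to compare against, and your algebraic route (minimal polynomial for the graph, implicit function theorem off a bad locus, then sign analysis of $f'$) is a legitimate classical strategy. As written, however, it rests twice on an invalid inference: from ``$E$ is a \emph{proper} semi-algebraic subset of $(a,b)$'' you conclude ``$E$ is finite by Lemma~\ref{FiniteLemma}.'' That lemma only says $E$ is a finite union of points \emph{and intervals}; properness does not exclude the intervals. For the critical set $\{t : \partial_y P(t,f(t))=0\}$ this is a real gap, not a slip of wording: minimality of $d=\deg_y P$ only rules out $\partial_y P$ vanishing on the \emph{whole} graph, so the critical set could a priori contain a subinterval $I$, and $\partial_y P$ vanishing on the graph over $I$ contradicts nothing (it is then a polynomial of smaller $y$-degree vanishing on part of $\Gamma$, not on all of $\Gamma$). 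The repair is to observe that minimality forces $P$ (up to its content in $\mathbb{R}[x]$) to be squarefree in $\mathbb{R}(x)[y]$---a repeated factor of positive $y$-degree could be replaced by a single copy without changing the real zero set while lowering $\deg_y$---so that $\mathrm{Res}_y(P,\partial_y P)$ is a nonzero polynomial in $x$ and the critical set sits inside its finite zero set together with the zeros of the leading coefficient.

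The second gap concerns the discontinuity set $D$. Knowing that, on a small interval $J$ avoiding the discriminant locus, $f(t)$ is always one of at most $d$ \emph{continuous} root branches $r_1<\cdots<r_k$ of $P(t,\cdot)$ does not make $f$ continuous on $J$: a priori $f$ could switch branches on a dense subset of $J$, so your claim that $D$ ``contains no subinterval'' does not follow from continuity of the roots alone. What closes the argument is that each selection set $\{t\in J : f(t)=r_i(t)\}$ is semi-algebraic (the functions $r_i$, being the ordered real roots of $P(t,\cdot)$, are semi-algebraic by Theorem~\ref{TarskiSeidenbergTheorem}), hence by Lemma~\ref{FiniteLemma} a finite union of points and intervals; since these sets cover $J$, the function $f$ coincides with a single analytic branch on each of finitely many subintervals. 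This yields piecewise analyticity directly and makes the separate discussion of $D$ unnecessary. Once these two points are fixed, your final step---$f'$ is semi-algebraic, so its sign is constant on the pieces of a further finite refinement, giving constancy or strict monotonicity on each piece---is correct as stated.
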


\section{Exact penalty functions: the general case} \label{Section3}

Throughout this paper, we consider the following constrained optimization problem
\begin{equation} \label{Problem}
\textrm{minimize } f(x) \quad \textrm{ subject to } x \in S, \tag{P}
\end{equation}
where $S$ is a nonempty subset of $\mathbb{R}^n$ and $f \colon \mathbb{R}^n \rightarrow \overline{\mathbb{R}}$ is an extended real-valued function. To avoid triviality, we assume in what follows that the optimal value $f_* := \inf_{x \in S} f(x)$ of
the problem~\eqref{Problem} is finite. In this section, we will present some global necessary and sufficient condition for a penalty function to be exact.
Let us start with the following.

% \begin{remark}{\rm
% To incorporate an abstract constraint of the form $x \in X \subset \mathbb{R}^n$ we simply replace $f$ by $f  + \delta_X.$
% }\end{remark}

\begin{definition}{\rm
A {\em residual function} of $S$ is a nonnegative valued function $\psi \colon \mathbb{R}^n \to \mathbb{R}_+$ such that $\psi(x) = 0$ if and only if $x \in S.$ 
}\end{definition}

\begin{remark}{\rm
If $\psi \colon \mathbb{R}^n \to \mathbb{R}_+$ is a residual function for $S,$ then for any function $\phi \colon \mathbb{R}^n \to (0, \infty)$ and any constants $\alpha > 0$ and $\beta > 0,$ one has the functions $\phi \cdot \psi,$ $\psi^\alpha$ and $\psi^\alpha + \psi^\beta$ are residual for $S.$ 
}\end{remark}

The following simple fact will play an important role in establishing exact penalty properties in optimization problems with unbounded constraint sets.

\begin{theorem} \label{DL31}
For any residual function $\psi \colon \mathbb{R}^n \to \mathbb{R}_+$ of $S,$ 
the following three properties are equivalent:
\begin{enumerate}[{\rm (i)}]
\item for all $c$ sufficiently large$,$ one has 
\begin{eqnarray*}
\inf_{x \in S} f(x) &=& \inf_{x \in \mathbb{R}^n} \big\{ f(x) + c\,\psi(x) \big\};
\end{eqnarray*}

\item there exists a constant $c_* > 0$ such that 
\begin{eqnarray*}
\inf_{x \in S} f(x) &=& \inf_{x \in \mathbb{R}^n} \big\{ f(x) + c_*\,\psi(x) \big\};
\end{eqnarray*}

\item there exists a constant $c_* > 0$ such that 
\begin{eqnarray} \label{Eqn1}
c_* \psi(x) &\ge& [f_* - f(x)]_+ \quad \textrm{ for all } \quad x \in \mathbb{R}^n.
\end{eqnarray}
\end{enumerate}
When these equivalent properties hold$,$ one has for all $c > c_*$ that
\begin{eqnarray*}
\mathrm{argmin}_S f(x) &=& \mathrm{argmin}_{\mathbb{R}^n} \big\{ f(x) + c\, \psi (x) \big\}.
\end{eqnarray*}
\end{theorem}

\begin{proof}
The implications (i) $\Rightarrow$ (ii) $\Rightarrow$ (iii) is obvious.

(iii) $\Rightarrow$ (i) Fix $c > c_*$ and take any $x \in \mathbb{R}^n.$ 
If $f(x) \ge f_*,$ then $f(x) + c\, \psi(x)   \ge f_*;$ otherwise \eqref{Eqn1} gives
\begin{eqnarray*}
f(x) + c\, \psi(x) &\ge& f(x) + c_*\, \psi(x) \ \ge \ f(x) + [f_* - f(x)]_+ \ = \ f_*.
\end{eqnarray*}
In both cases we have 
\begin{eqnarray*}
f(x) + c\, \psi(x) &\ge& f_*.
\end{eqnarray*}
Since $x \in \mathbb{R}^n$ was chosen arbitrarily, we conclude that 
\begin{eqnarray*}
\inf_{x \in \mathbb{R}^n} \big\{ f(x) + c\, \psi (x) \big\} &\ge& f_*.
\end{eqnarray*}
On the other hand, by definition, we have
\begin{eqnarray*}
\inf_{x \in \mathbb{R}^n} \big\{ f(x) + c\, \psi (x) \big\} & \le & \inf_{x \in S} \big\{ f(x) + c\, \psi (x) \big\} \ = \ \inf_{x \in S} f(x) \ = \ f_*.
\end{eqnarray*}
Therefore, (i) holds.

To show the last conclusion, assume that $x_0$ minimizes $f$ on $S.$ Then 
\begin{eqnarray*}
f(x_0) + c\, \psi(x_0) &=& f(x_0) \ = \ f_*,
\end{eqnarray*}
which, together with (i), yields that $x_0$ minimizes $f + c\psi$ on $\mathbb{R}^n.$

Conversely, assume that $x_0$ minimizes $f + c\psi$ on $\mathbb{R}^n.$ 
Since $c > c_*$ and the function $\psi$ is nonnegative, it holds that
\begin{eqnarray*}
f (x_0) + c_* \psi(x_0) \leq f(x_0) + c\, \psi(x_0) &\leq& f_*.
\end{eqnarray*}
On the other hand, it follows from \eqref{Eqn1} that
\begin{eqnarray*}
f(x_0) + c_*\psi(x_0)  &\ge& f_*.
\end{eqnarray*}
Therefore,
\begin{eqnarray*}
f(x_0) + c_*\psi(x_0) &=& f(x_0) + c\, \psi(x_0)  \ = \ f_*.
\end{eqnarray*}
Since $c_* < c,$ then $\psi(x_0) = 0.$ Hence $x_0 \in S$ and $f(x_0) = f_*.$ Thus, $x_0$ minimizes $f$ on $S.$
\end{proof}

\begin{example}{\rm
Let $n := 1, S := \{x \in \mathbb{R} \mid x < 1\}$ and $f(x) := |x| + \delta_{\{x \ne 0\}}(x).$
By definition, the function $\psi \colon \mathbb{R} \to \mathbb{R}$ defined by
$$\psi(x) := 
\begin{cases}
\max\{x - 1, 0\} & \textrm{ if } x \ne 1, \\
1 & \textrm{ otherwise}
\end{cases}$$
is a residual function for $S.$ Fix any $c_* > 0.$ It is easy to check that $f_* = 0$ and 
\begin{eqnarray*}
c_* \psi(x) &\ge& 0 \ = \ [f_* - f(x)]_+ \quad \textrm{ for all } \quad x \in \mathbb{R}.
\end{eqnarray*}
By Theorem~\ref{DL31}, for any $c > c_*$ we have
\begin{eqnarray*}
\inf_{x \in S} f(x) &=& \inf_{x \in \mathbb{R}} \big\{ f(x) + c\,\psi(x) \big\},\\
\mathrm{argmin}_S f(x) &=& \mathrm{argmin}_{\mathbb{R}} \big\{ f(x) + c\, \psi (x) \big\} \ = \ \emptyset.
\end{eqnarray*}
Also observe that the constraint set $S$ is neither closed nor bounded and the objective function $f$ is not lower semicontinuous.
}\end{example}

The results presented in the rest of this section are inspired by the works of Clarke \cite[Section~6.4]{Clarke1990} and Burke \cite{Burke1991-1, Burke1991-2}. To begin with, assume that
\begin{eqnarray*}
S &:=& \{x \in \mathbb{R}^n \mid g(x) \in C\},
\end{eqnarray*}
where $g \colon \mathbb{R}^n \rightarrow \mathbb{R}^m$ is a mapping and $C$ is a nonempty subset of $\mathbb{R}^m.$ Then the problem~\eqref{Problem} can be rewritten as
\begin{equation*} \label{Problem0}
\textrm{minimize } f(x) \quad \textrm{ subject to } g(x) \in C. \tag{P$_0$}
\end{equation*}
In this case, an equivalent relation between the exact penalization and the global calmness can be stated as follows.
%this relation can be seen as a global version of \cite[Proposition~6.4.3]{Clarke1990}, \cite[Theorem~21]{Dolecki1979} and \cite[Theorem~1.1]{Burke1991-1}.

\begin{theorem} \label{DL32}
The following two properties are equivalent:
\begin{enumerate}[{\rm (i)}]
\item for all $c$ sufficiently large one has
\begin{eqnarray*}
\inf_{g(x) \in {C}} f(x) &=& \inf_{x \in \mathbb{R}^n} \big\{ f(x) + c\, \mathrm{dist} \big(g(x), {C}\big) \big\};
\end{eqnarray*}

\item the problem~\eqref{Problem0} is globally calm in the sense that there exists a constant $c_* > 0$ such that for every pair $(\overline{x}, \overline{u}) \in \mathbb{R}^n \times \mathbb{R}^m$ with $g(\overline{x}) \in {C} + \overline{u},$ one has
\begin{eqnarray*}
\inf_{g(x) \in {C}} f(x) &\le& f(\overline{x}) + c_*\, \|\overline{u}\|.
\end{eqnarray*}
\end{enumerate}
When these equivalent properties hold$,$ one has for all $c > c_*$ that
\begin{eqnarray*}
\mathrm{argmin}_{g(x) \in C} f(x) &=& \mathrm{argmin}_{\mathbb{R}^n} \big\{ f(x) + c\, \mathrm{dist}(g(x), C)  \big\}.
\end{eqnarray*}
\end{theorem}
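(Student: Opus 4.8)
\emph{Plan.}
The plan is to reduce everything to Theorem~\ref{DL31} by taking as residual function the composite distance
$$\psi(x) := \mathrm{dist}\big(g(x), C\big).$$
This $\psi$ is nonnegative and vanishes on $S = \{x \mid g(x) \in C\}$, so it is a residual function for $S$; moreover property (i) of the present statement is exactly property (i) of Theorem~\ref{DL31} for this $\psi$. Hence it suffices to show that global calmness (ii) is equivalent to condition (iii) of Theorem~\ref{DL31}, namely the existence of $c_* > 0$ with $c_*\psi(x) \ge [f_* - f(x)]_+$ for all $x \in \mathbb{R}^n$. The remaining equivalences and the $\mathrm{argmin}$ identity then follow immediately from Theorem~\ref{DL31} applied to this $\psi$.

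The bridge between the two formulations is the elementary observation that, for a fixed $\overline{x}$, the admissible perturbations in (ii) are exactly the vectors $\overline{u} = g(\overline{x}) - c$ with $c \in C$: indeed $g(\overline{x}) \in C + \overline{u}$ means $g(\overline{x}) - \overline{u} \in C$. Consequently $\inf\{\|\overline{u}\| \mid g(\overline{x}) \in C + \overline{u}\} = \inf_{c \in C}\|g(\overline{x}) - c\| = \psi(\overline{x})$. First I would use this to prove (ii) $\Rightarrow$ (iii): since the left-hand side $f_* - f(\overline{x})$ of the calmness inequality does not depend on $\overline{u}$, requiring $f_* \le f(\overline{x}) + c_*\|\overline{u}\|$ for every admissible $\overline{u}$ is the same as requiring it with $\|\overline{u}\|$ replaced by its infimum $\psi(\overline{x})$; rearranging and using $\psi \ge 0$ gives $c_*\psi(\overline{x}) \ge [f_* - f(\overline{x})]_+$. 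For the converse (iii) $\Rightarrow$ (ii), given any admissible pair $(\overline{x}, \overline{u})$ one has $\|\overline{u}\| \ge \psi(\overline{x})$, whence $f(\overline{x}) + c_*\|\overline{u}\| \ge f(\overline{x}) + c_*\psi(\overline{x}) \ge f_*$.

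\emph{Main obstacle.}
I expect the main, though modest, obstacle to be exactly this quantifier reformulation: making rigorous that the universally quantified calmness bound over all perturbations $\overline{u}$ collapses to the single distance-based inequality. This rests on two points that must be stated carefully, namely that the infimum of $\|\overline{u}\|$ over admissible $\overline{u}$ equals $\psi(\overline{x})$ and that $f_* - f(\overline{x})$ is independent of $\overline{u}$, so the family of inequalities is governed by its infimal member. Once (ii) $\Leftrightarrow$ (iii) is in hand, invoking Theorem~\ref{DL31} closes the equivalence (i) $\Leftrightarrow$ (ii) and yields the identity of the argmin sets for every $c > c_*$.
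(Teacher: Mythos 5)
Your proposal is correct and takes essentially the same route as the paper: both arguments rest on the elementary identity $\inf\{\|\overline{u}\| \mid g(\overline{x}) \in C + \overline{u}\} = \mathrm{dist}(g(\overline{x}), C)$, which the paper realizes via the triangle inequality (for (i)\,$\Rightarrow$\,(ii)) and via the near-minimizing choice $\overline{u} = g(\overline{x}) - y$ (for (ii)\,$\Rightarrow$\,(i)), and both obtain the $\mathrm{argmin}$ identity from Theorem~\ref{DL31}. The only cosmetic difference is that you pivot through condition (iii) of Theorem~\ref{DL31} with $\psi = \mathrm{dist}(g(\cdot),C)$, whereas the paper proves the equivalence of (i) and (ii) directly.
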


\begin{proof}
It suffices to show the equivalence of (i) with (ii) since the last conclusion is a direct consequence of Theorem~\ref{DL31}.

(i) $\Rightarrow$ (ii) 
Assume that
\begin{eqnarray*}
\inf_{g(x) \in {C}} f(x) &=& \inf_{x \in \mathbb{R}^n} \big\{ f(x) + c_*\, \mathrm{dist} \big(g(x), {C}\big) \big\}
\end{eqnarray*}
for some $c_* > 0.$ Let $(\overline{x}, \overline{u}) \in \mathbb{R}^n \times \mathbb{R}^m$ be such that $g(\overline{x}) \in {C} + \overline{u}.$
Then
\begin{eqnarray*}
\inf_{g(x) \in {C}} f(x)
&\le& f(\overline{x}) + c_* \, \mathrm{dist} \big(g(\overline{x}), {C}\big) \\
&=& f(\overline{x}) + c_* \, \inf \big\{ \|g(\overline{x}) - y\| \mid y \in {C} \big\} \\
&\le& f(\overline{x}) + c_* \, \inf \big\{ \|g(\overline{x}) - (y + {\overline{u}})\| + \|\overline{u}\|  \mid y \in {C} \big\} \\
&=& f(\overline{x}) + c_* \, \mathrm{dist} \big(g(\overline{x}), {C} + {\overline{u}}\big) + c_*\, \|\overline{u}\| \\
&=& f(\overline{x}) + c_*\, \|\overline{u}\|,
\end{eqnarray*}
which gives the desired result.

(ii) $\Rightarrow$ (i) Let $\overline{x} \in \mathbb{R}^n$ and take any $\epsilon > 0.$ There exists a point $y \in {C}$ such that
\begin{eqnarray*}
\|g(\overline{x}) - y\| &\le& \mathrm{dist} \big(g(\overline{x}), {C}\big) + \epsilon.
\end{eqnarray*}
Let $\overline{u} := g(\overline{x}) - y.$ Then $g(\overline{x}) = y + \overline{u} \in {C} + \overline{u}.$ By hypothesis, we get
\begin{eqnarray*}
\inf_{g(x) \in {C}} f(x)
&\le& f(\overline{x}) + c_* \, \|\overline{u} \| \\
&\le& f(\overline{x}) + c_*\, \mathrm{dist} \big(g(\overline{x}), {C}\big) + c_* \epsilon.
\end{eqnarray*}
Taking the limit as $\epsilon \searrow 0$ yields the inequality
\begin{eqnarray*}
\inf_{g(x) \in {C}} f(x) &\le& f(\overline{x}) + c_*\, \mathrm{dist} \big(g(\overline{x}), {C}\big).
\end{eqnarray*}
Since $\overline{x}$ was arbitrary in $\mathbb{R}^n,$ it follows that
\begin{eqnarray*}
\inf_{g(x) \in {C}} f(x) &\le& \inf_{x \in \mathbb{R}^n} \big\{ f(x) + c_*\, \mathrm{dist} \big(g(x), {C}\big) \big\}.
\end{eqnarray*}
Since the opposite inequality holds trivially, the desired equality follows.
\end{proof}

To study the exactness of penalty functions, we can imbed the problem~\eqref{Problem0} into a parametrized family of mathematical programs. For each $u \in \mathbb{R}^m$ consider the perturbed problem
\begin{equation*}
\textrm{minimize } f(x) \quad \textrm{ subject to } g(x) \in C + u.
\end{equation*}
Accordingly, we define the optimal value function $V \colon \mathbb{R}^m \to \mathbb{R} \cup \{\pm \infty\}$ by
\begin{eqnarray*}
V(u) &:=&
\begin{cases}
\inf_{g(x) \in C + u} f(x) & \textrm{ if }  \{x \in \mathbb{R}^n \mid g(x) \in C + u \} \ne \emptyset, \\ 
\infty & \textrm{ otherwise.}
\end{cases}
\end{eqnarray*}

\begin{corollary}
If there exists a constant $c_* > 0$ such that 
\begin{eqnarray*}
\inf_{g(x) \in {C}} f(x) &=& \inf_{x \in \mathbb{R}^n} \big\{ f(x) + c_*\, \mathrm{dist} \big(g(x), {C}\big) \big\},
\end{eqnarray*}
then 
\begin{eqnarray*}
\liminf_{u \to 0} \frac{V(u) - V(0)}{\|u  \|} &>& -\infty.
\end{eqnarray*}
The converse holds true if $f$ is bounded from below on $\mathbb{R}^n.$
\end{corollary}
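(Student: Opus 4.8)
The plan is to reformulate both implications through the single estimate $V(u) \ge V(0) - c_*\|u\|$ relating the value function to the penalty parameter, and to exploit the elementary bound $\mathrm{dist}(g(x),C) \le \|u\|$ valid whenever $g(x) \in C + u$. Throughout I record that $V(0) = \inf_{g(x)\in C} f(x) = f_*$ is finite by the standing assumption.

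For the forward implication, I would suppose the displayed penalty equality holds with some parameter $c_*$, fix $u \in \mathbb{R}^m$, and take any $x$ feasible for the $u$-perturbed problem, i.e. $g(x) \in C + u$. Choosing $y \in C$ with $g(x) = y + u$ yields $\mathrm{dist}(g(x),C) \le \|u\|$, so $f(x) + c_*\|u\| \ge f(x) + c_*\,\mathrm{dist}(g(x),C) \ge \inf_{x'}\{f(x') + c_*\,\mathrm{dist}(g(x'),C)\} = V(0)$. Taking the infimum over such $x$ gives $V(u) \ge V(0) - c_*\|u\|$ for every $u$ (trivially so when $V(u)=\infty$), whence $\frac{V(u)-V(0)}{\|u\|} \ge -c_*$ for all $u \ne 0$. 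Passing to the lower limit produces $\liminf_{u\to 0}\frac{V(u)-V(0)}{\|u\|} \ge -c_* > -\infty$.

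For the converse, I would assume $f \ge M$ on $\mathbb{R}^n$ together with finiteness of the lower limit, and extract constants $c_0 \ge 0$ and $\delta > 0$ with $V(u) \ge V(0) - c_0\|u\|$ for all $0 < \|u\| \le \delta$. The role of the boundedness hypothesis is to upgrade this local bound to a global one: for $\|u\| > \delta$ one has $V(u) \ge M \ge V(0) - \frac{V(0)-M}{\delta}\|u\|$, using $V(0) = f_* \ge M$ and $\|u\|/\delta > 1$. Setting $c_* := \max\{c_0, (V(0)-M)/\delta\}$, enlarged if necessary to be positive, therefore gives $V(u) \ge V(0) - c_*\|u\|$ for every $u \in \mathbb{R}^m$. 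This is precisely the global calmness estimate of Theorem~\ref{DL32}(ii): for any $(\overline x,\overline u)$ with $g(\overline x) \in C + \overline u$, feasibility of $\overline x$ at parameter $\overline u$ gives $V(0) \le V(\overline u) + c_*\|\overline u\| \le f(\overline x) + c_*\|\overline u\|$, and invoking Theorem~\ref{DL32} delivers the penalty equality. Alternatively, one verifies it directly: for fixed $x$ and any $\epsilon > 0$, picking $y \in C$ with $\|g(x)-y\| \le \mathrm{dist}(g(x),C)+\epsilon$ and $u := g(x)-y$ gives $V(0) - c_*\|u\| \le V(u) \le f(x)$, and letting $\epsilon \searrow 0$ yields $f(x) + c_*\,\mathrm{dist}(g(x),C) \ge V(0)$, i.e. condition (iii)-type inequality of Theorem~\ref{DL31}.

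I expect the main obstacle to lie in the converse, specifically at the passage from the local estimate supplied by the finite lower limit to a global linear minorant of $V$. The lower limit constrains $V$ only in a neighborhood of the origin, so the growth of $V$ for large $\|u\|$ is a priori uncontrolled; it is exactly here that the uniform lower bound on $f$ becomes indispensable, and one should anticipate that the implication fails without it.
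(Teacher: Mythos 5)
Your proposal is correct, and the forward implication coincides with the paper's argument: both derive the uniform estimate $V(u)\ge V(0)-c_*\|u\|$ from the penalty equality via $\mathrm{dist}(g(x),C)\le\|\overline u\|$ (the paper routes this through the calmness condition of Theorem~\ref{DL32}, but the computation is the same). The converse is where you diverge in structure. The paper argues by contradiction: negating the penalty equality and invoking Theorem~\ref{DL32} produces a sequence $(x_k,u_k)$ with $g(x_k)\in C+u_k$ and $f_*>f(x_k)+k\|u_k\|\ge \inf_{\mathbb{R}^n}f+k\|u_k\|$, the lower bound on $f$ forces $u_k\ne 0$ and $u_k\to 0$, and then $0>\frac{V(u_k)-V(0)}{\|u_k\|}+k$ contradicts the finite lower limit. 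You instead argue directly: the finite lower limit yields a local linear minorant $V(u)\ge V(0)-c_0\|u\|$ for $\|u\|\le\delta$, and the bound $f\ge M$ supplies the trivial global estimate $V(u)\ge M\ge V(0)-\frac{V(0)-M}{\delta}\|u\|$ for $\|u\|>\delta$; patching the two gives an explicit global calmness constant $c_*=\max\{c_0,(V(0)-M)/\delta\}$ (made positive), after which Theorem~\ref{DL32} closes the argument. Both proofs use the boundedness of $f$ in exactly the same place --- to control the regime of large $\|u\|$, which the lower limit says nothing about --- but your version is constructive and quantitative where the paper's is a proof by contradiction; your closing remark correctly identifies this globalization step as the crux and as the point where the hypothesis on $f$ is indispensable (consistent with the paper's reference to Example~\ref{VD42}(ii) as a counterexample without it).
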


\begin{proof}
Assume that
\begin{eqnarray*}
\inf_{g(x) \in {C} } f(x) &=& \inf_{x \in \mathbb{R}^n} \big\{f(x) + c_*\, \mathrm{dist} \big(g(x), {C}\big) \big\}.
\end{eqnarray*}
Take any $u \in \mathbb{R}^m.$ By Theorems~\ref{DL31} and \ref{DL32}, if there exists a point $\overline{x} \in \mathbb{R}^n$ with $g(\overline{x})  \in {C} + u,$ then 
\begin{eqnarray*}
\inf_{g(x) \in {C}} f(x) &\le& f(\overline{x}) + c_*\, \|u \|,
\end{eqnarray*}
which easily yields that
\begin{eqnarray*}
V(0) &\le& V(u) + c_*\, \|u \|.
\end{eqnarray*}
This inequality is still valid when there is no point $\overline{x}$ with $g(\overline{x})  \in {C} + u.$ Since $u \in \mathbb{R}^m$ was arbitrary, we get the desired conclusion
\begin{eqnarray*}
\liminf_{u \to 0} \frac{V(u) - V(0)}{\|u \|} &\ge& -c_* \ > \ -\infty.
\end{eqnarray*}

Now assume that $f$ is bounded from below on $\mathbb{R}^n$ and the converse is not true. By Theorem~\ref{DL32}, there exists a sequence $(x_k, u_k) \in \mathbb{R}^n \times \mathbb{R}^m$ with $g(x_k) \in C + u_k$ such that
\begin{eqnarray*}
\inf_{g(x) \in {C}} f(x) &>& f(x_k) + k\, \|u_k\| \ \ge \ \inf_{x \in \mathbb{R}^n} f(x)  + k\, \|u_k\|.
\end{eqnarray*}
Certainly $u_k \ne 0$ and $u_k \to 0$ as $k \to \infty.$ Moreover, we have
\begin{eqnarray*}
0 &>& \big [f(x_k) -  \inf_{g(x) \in {C}} f(x)  \big ] + k\, \|u_k \| \ \ge \ V(u_k) - V(0) + k\, \|u_k \|.
\end{eqnarray*}
Hence
\begin{eqnarray*}
0 &>& \frac{V(u_k) - V(0)}{\|u_k\|} + k,
\end{eqnarray*}
which is impossible for $k$ sufficiently large.
\end{proof}

The converse of the corollary above is not true if $f$ is not bounded from below on $\mathbb{R}^n;$ see Example~\ref{VD52}(ii).

\begin{remark}{\rm
%The results in the two subsections above can be applied to locally optimal solutions $\overline{x}$ relative to $S$ by substituting a subset $S \cap \mathbb{B}_\epsilon(\overline{x})$ for $S.$ 
The results presented in this section can be applied to an abstract constraint of the form $x \in X$ with $X$ being a subset of $\mathbb{R}^n$ 
 by substituting $f$ by $f  + \delta_X(\cdot).$
}\end{remark}

\section{Exact penalty functions: the locally Lipschitz case} \label{Section4}

This section is inspired by the work of Zaslavski \cite{Zaslavski2005, Zaslavski2013}. Indeed, consider the problem~\eqref{Problem} with the objective function $f$ being locally Lipschitz continuous and the constraint set $S$ being given by 
\begin{eqnarray*}
S &:=& \{x \in \mathbb{R}^n \ | \ g(x) \in C\},
\end{eqnarray*}
where $g \colon \mathbb{R}^n \to \mathbb{R}^m$ is a locally Lipschitz mapping and $C$ is a nonempty closed subset of $\mathbb{R}^m.$
Assume that the optimal value $f_* := \inf_{g(x) \in C} f(x)$ is finite. Under regularity conditions, {\em local and at infinity}, we will give a necessary and sufficient condition 
for the following to hold
\begin{eqnarray*}
\inf_{g(x) \in C} f(x) &=& \inf_{x \in \mathbb{R}^n} \big\{ f(x) + c\, \mathrm{dist}(g(x), C) \big\}
\end{eqnarray*}
for all $c$ sufficiently large. 

Recall that for each vector $w \in \mathbb{R}^m,$ the function $\langle w, g\rangle \colon \mathbb{R}^n \to \mathbb{R}$ is defined by
\begin{eqnarray*}
\langle w, g\rangle (x) &:=& \langle w, g(x)\rangle \quad \textrm{ for } \quad x \in \mathbb{R}^n.
\end{eqnarray*}
Let us start with the following.
\begin{definition}{\rm
We say that the {\em (generalized) Mangasarian--Fromovitz constraint qualification} (MFCQ) holds at $x \in S$ if there is no unit vector $w \in N_C(g(x))$ such that $0 \in \partial \langle w, g \rangle (x).$
}\end{definition}

\begin{remark}{\rm
When $C = \{0\} \times (\mathbb{R}_-)^{m - p} \subset \mathbb{R}^m$ and $g$ is of class $C^1,$ the above definition reduces to the traditional one; see \cite{Mangasarian1967}.
}\end{remark}

Let $K_{\infty}(f, S)$ be the set of all $t \in \mathbb{R}$ for which there exists a sequence $x_k \in \mathbb{R}^n \setminus S$ satisfying the following conditions:
\begin{eqnarray*}
\lim_{k \to \infty} \|x_k\| = \infty,  \quad \lim_{k \to \infty}  \|x_k\| \nu(x_k) = 0, \quad \lim_{k \to \infty} f(x_k) = t, \quad 
\textrm{ and } \quad \lim_{k \to \infty}  \mathrm{dist}(g(x_k), C) = 0.
\end{eqnarray*}
Here and in the following we put
\begin{eqnarray*}
\nu(x) &:=& \inf \|\lambda u + (1 - \lambda) v\| \quad \textrm{ for } \quad x  \in \mathbb{R}^n \setminus S
\end{eqnarray*}
with the infimum being taken over all real numbers $\lambda \in (0, 1)$ and vectors $u \in \partial f(x), v \in \partial \langle w, g \rangle (x),$ and $w \in \frac{g(x) - \Pi_{C}(g(x))}{\mathrm{dist}(g(x), C)}.$

\begin{remark}{\rm
(i) The definition of $K_{\infty}(f, S)$  is strongly related to the {\em weak Palais--Smale condition} (see, for example, \cite{Kurdyka2000-1, Rabier1997}).

(ii) When $C = \{0\} \subset \mathbb{R}^{m}$ and $f$ and $g$ are of class $C^1,$ we have that
\begin{eqnarray*}
\nu(x) &=& \inf \big\{  \|\lambda \nabla f(x)  + (1 - \lambda) \nabla \|g\|(x) \| \mid \lambda \in (0, 1)\big\}.
\end{eqnarray*}

(iii) If for some $c_* > 0$ we have
\begin{eqnarray*}
-\infty \ < \ \inf_{g(x) \in C} f(x) &=& \inf_{x \in \mathbb{R}^n} \big\{ f(x) + c_*\, \mathrm{dist}(g(x), C)  \big\},
\end{eqnarray*}
then the function 
$$\mathbb{R}^n \rightarrow \mathbb{R} \cup \{\infty\}, \quad x \mapsto f(x) + c_*\, \mathrm{dist}(g(x), C) ,$$ must be bounded from below. Conversely, we have the following result, which improves \cite[Theorems~1.5~and~3.1]{Zaslavski2005} and \cite[Theorems~1.3]{Zaslavski2013} (in the finite-dimensional setting).
}\end{remark}

\begin{theorem} \label{DL41}
Let the following two assumptions hold
\begin{enumerate}[{\rm ({A}1)}]
\item the {\rm (MFCQ)} is satisfied at every global minimum of $f$ on $S;$

\item the inclusion $K_{\infty}(f, S) \subset (f_*, \infty)$ is valid.
\end{enumerate}
Then the following properties are equivalent:

\begin{enumerate}[{\rm (i)}]
\item for all $c$ sufficiently large$,$ one has
\begin{eqnarray*}
\inf_{g(x) \in C} f(x) &=& \inf_{x \in \mathbb{R}^n} \big\{ f(x) + c\, \mathrm{dist}(g(x), C)  \big\};
\end{eqnarray*}

\item there exists a constant $c_* > 0$ such that 
\begin{eqnarray*}
\inf_{x \in \mathbb{R}^n}\big\{ f(x) + c_* \mathrm{dist}(g(x), C)   \big\} &>& -\infty.
\end{eqnarray*}
\end{enumerate}
When these equivalent properties hold, one has for any $c > c_*$ that
\begin{eqnarray*}
\mathrm{argmin}_{g(x) \in C} f(x) &=& \mathrm{argmin}_{\mathbb{R}^n} \big\{ f(x) + c\, \mathrm{dist}(g(x), C)  \big\};
\end{eqnarray*}
moreover, if $\overline{x}$ is a global minimizer for the problem~\eqref{Problem0}, then there exists a vector $w \in N_C(g(\overline{x})) \cap \mathbb{B}$ satisfying the following optimality condition
\begin{eqnarray*}
0 &\in& \partial f(\overline{x}) + c\, \partial \langle w, g\rangle (\overline{x}).
\end{eqnarray*}
\end{theorem}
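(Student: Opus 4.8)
The plan is to reduce the equivalence to Theorem~\ref{DL31} and then prove the only substantial implication by contradiction, using Ekeland's variational principle together with the subdifferential calculus of Section~\ref{Section2}. Set $\psi(x) := \mathrm{dist}(g(x), C)$. Since $C$ is closed, $\psi$ is a residual function for $S$, and it is locally Lipschitz because $g$ is locally Lipschitz and $\mathrm{dist}(\cdot, C)$ is globally $1$-Lipschitz. Applying Theorem~\ref{DL31} with this $\psi$, property (i) is equivalent to the error bound: there is $c_* > 0$ with $c_*\,\psi(x) \ge [f_* - f(x)]_+$ for all $x \in \mathbb{R}^n$. The implication (i)$\Rightarrow$(ii) is then immediate, since (i) forces $\inf_{\mathbb{R}^n}\{f + c\,\psi\} = f_* > -\infty$. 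Thus the core of the proof is to show that (ii) together with (C1) and (C2) implies this error bound, and (C1), (C2) are used only here.

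I would argue by contradiction. If the error bound fails, then for every $k$ there is $x_k$ with $f(x_k) + k\,\psi(x_k) < f_*$; in particular $f(x_k) < f_*$, so $x_k \notin S$ and $\psi(x_k) > 0$. Write $m_0 := \inf_{\mathbb{R}^n}\{f + c_*\psi\} > -\infty$ from (ii); one checks $m_0 \le f_*$, and if $m_0 = f_*$ then (ii) already yields the error bound, so we may assume $M := f_* - m_0 > 0$. The functions $F_k := f + k\psi$ are locally Lipschitz, lower semicontinuous and, for $k \ge c_*$, satisfy $\inf_{\mathbb{R}^n} F_k \ge m_0$; hence $\varepsilon_k := F_k(x_k) - \inf_{\mathbb{R}^n} F_k$ obeys $0 \le \varepsilon_k < M$. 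Comparing $F_k(x_k) < f_*$ with $f + c_*\psi \ge m_0$ gives $(k - c_*)\psi(x_k) < M$, so $\psi(x_k) \to 0$. I then apply Ekeland's variational principle to $F_k$ with tolerance $\varepsilon_k$ and radius $\lambda_k := (1 + \|x_k\|)/\sqrt{1 + k}$, obtaining $y_k$ with $F_k(y_k) \le F_k(x_k) < f_*$, $\|y_k - x_k\| \le \lambda_k$, and $0 \in \partial F_k(y_k) + (\varepsilon_k/\lambda_k)\mathbb{B}$ via Fermat's rule (Lemma~\ref{FermatRule}) and the sum rule (Lemma~\ref{SumRule}). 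As before $y_k \notin S$ and $\psi(y_k) \to 0$, while $f(y_k) < f_*$ stays bounded below by $m_0 - c_*\psi(y_k)$.

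The decisive step is to convert near-criticality of $F_k$ into smallness of $\nu$. Choose $\xi_k \in \partial F_k(y_k)$ with $\|\xi_k\| \le \varepsilon_k/\lambda_k$; the sum rule gives $\xi_k = u_k + k\,p_k$ with $u_k \in \partial f(y_k)$ and $p_k \in \partial\psi(y_k)$, and the chain rule (Lemma~\ref{ChainRule}) combined with the formula for $\partial\,\mathrm{dist}(\cdot,C)$ in Lemma~\ref{BD22} yields $p_k \in \partial\langle w_k, g\rangle(y_k)$ for a unit vector $w_k$ normal to $C$ at a nearest point of $g(y_k)$. Taking $\mu_k := 1/(1 + k) \in (0,1)$ turns $\xi_k/(1+k)$ into the convex combination $\mu_k u_k + (1 - \mu_k)p_k$, whence $\nu(y_k) \le \|\xi_k\|/(1+k) \le \varepsilon_k / ((1+k)\lambda_k)$. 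The point of the radius $\lambda_k$ is that, since $\|y_k\| \le \|x_k\| + \lambda_k \le 2(1 + \|x_k\|)$, it makes both $\nu(y_k) \le M/\sqrt{1+k}$ and $\|y_k\|\,\nu(y_k) \le 2M/\sqrt{1+k}$ tend to $0$. Now split on the behaviour of $\{y_k\}$. If a subsequence stays bounded, then $y_k \to \hat{x}$ with $\hat{x} \in S$ and $f(\hat{x}) = f_*$, a global minimizer; passing to the limit in $\mu_k u_k + (1-\mu_k)p_k \to 0$, using the uniform subgradient bounds of Lemma~\ref{BD26}, the closedness of the coderivative graph (Lemma~\ref{BD28}) and the outer semicontinuity of the normal cone (Lemma~\ref{BD21}), produces a unit $\bar{w} \in N_C(g(\hat{x}))$ with $0 \in \partial\langle \bar{w}, g\rangle(\hat{x})$, contradicting (C1). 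If instead $\|y_k\| \to \infty$, then after passing to a subsequence with $f(y_k) \to t' \le f_*$ the four defining conditions of $K_\infty(f, S)$ hold, so $t' \in K_\infty(f, S) \cap (-\infty, f_*]$, contradicting (C2). This asymptotic case, and in particular the tuning of $\lambda_k$ so that $\|y_k\|\,\nu(y_k) \to 0$ while $\|y_k\| \to \infty$, is the \textbf{main obstacle}.

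Once the error bound is established, the equality of argmin sets for $c > c_*$ is exactly the last assertion of Theorem~\ref{DL31}. Finally, if $\bar{x}$ is a global minimizer of~\eqref{Problem0}, then by that argmin equality $\bar{x}$ minimizes $f + c\,\psi$ over $\mathbb{R}^n$; Fermat's rule and the sum rule give $0 \in \partial f(\bar{x}) + c\,\partial\psi(\bar{x})$, and since $g(\bar{x}) \in C$, the chain rule together with the case $x \in \Omega$ of Lemma~\ref{BD22} gives $\partial\psi(\bar{x}) \subset \bigcup \{\partial\langle w, g\rangle(\bar{x}) : w \in N_C(g(\bar{x})) \cap \mathbb{B}\}$. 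Selecting the corresponding $w$ yields a vector $w \in N_C(g(\bar{x})) \cap \mathbb{B}$ with $0 \in \partial f(\bar{x}) + c\,\partial\langle w, g\rangle(\bar{x})$, which is the claimed optimality condition.
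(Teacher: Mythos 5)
Your proposal is correct and follows essentially the same route as the paper's proof: reduce the equivalence to the error bound $c_*\psi \ge [f_*-f]_+$ via Theorem~\ref{DL31}, argue by contradiction using Ekeland's variational principle together with the Fermat, sum and chain rules, and split into a bounded case (contradicting (C1) via closedness of the subdifferential and normal-cone constructions) and an unbounded case (contradicting (C2) via $K_\infty(f,S)$), with the final optimality condition obtained exactly as in the paper. The only differences are cosmetic: you take the Ekeland radius $\lambda_k=(1+\|x_k\|)/\sqrt{1+k}$ and tolerance $\varepsilon_k=F_k(x_k)-\inf F_k$ instead of the paper's $(1+\|x_k\|)/2$ and $f_*-\inf F_k$ (both choices make $\|y_k\|\,\nu(y_k)\to 0$; just note that your $\varepsilon_k$ could vanish, in which case $y_k:=x_k$ works directly), and you dichotomize on $\{y_k\}$ rather than $\{x_k\}$, which is equivalent.
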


To prove the theorem, we need the following simple fact.

\begin{lemma} \label{BD41}
Let $x_k, v_k  \in \mathbb{R}^n$ and $w_k \in \mathbb{R}^m$ be sequences converging to $\overline{x}, \overline{v}$ and $\overline{w},$ respectively, such that $v_{k} \in \partial \langle w_k, g\rangle (x_k)$ for all $k.$ Then $\overline{v} \in \partial \langle \overline{w}, g\rangle (\overline{x}).$
\end{lemma}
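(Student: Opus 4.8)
The plan is to convert the assertion, which concerns subdifferentials of the \emph{varying} scalarizations $\langle w_k, g\rangle$, into a statement about the limiting normal cone to the single, fixed set $\mathrm{gph}\, g$, to which the robustness property of Lemma~\ref{BD21} applies directly. The point is that passing to the graph of $g$ removes the dependence on the index $k$ hidden in the functions being differentiated.

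First I would invoke the scalarization formula of Lemma~\ref{BD28}. Since $g$ is locally Lipschitz, it is Lipschitz around each $x_k$ and around $\overline{x},$ so Lemma~\ref{BD28} gives
\[
\partial \langle w_k, g\rangle (x_k) = D^* g(x_k)(w_k) = \{u \in \mathbb{R}^n \mid (u, -w_k) \in N_{\mathrm{gph}\, g}(x_k, g(x_k))\}.
\]
Thus the hypothesis $v_k \in \partial \langle w_k, g\rangle(x_k)$ translates into $(v_k, -w_k) \in N_{\mathrm{gph}\, g}(x_k, g(x_k))$ for every $k.$

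Next I would pass to the limit. Because $g$ is locally Lipschitz it is in particular continuous, so $x_k \to \overline{x}$ forces $g(x_k) \to g(\overline{x});$ hence $(x_k, g(x_k)) \to (\overline{x}, g(\overline{x}))$ with all points lying in $\mathrm{gph}\, g.$ Moreover the graph of a continuous mapping is a closed subset of $\mathbb{R}^n \times \mathbb{R}^m,$ so $\mathrm{gph}\, g$ is nonempty and closed and Lemma~\ref{BD21} is applicable to it. Since $(v_k, -w_k) \to (\overline{v}, -\overline{w})$ with $(v_k, -w_k) \in N_{\mathrm{gph}\, g}(x_k, g(x_k)),$ the very definition of the Painlev\'e--Kuratowski outer limit, combined with Lemma~\ref{BD21}, yields $(\overline{v}, -\overline{w}) \in N_{\mathrm{gph}\, g}(\overline{x}, g(\overline{x})).$ Applying Lemma~\ref{BD28} once more at $\overline{x}$ gives $\overline{v} \in D^* g(\overline{x})(\overline{w}) = \partial \langle \overline{w}, g\rangle(\overline{x}),$ which is the desired conclusion.

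The only point requiring care --- and the reason a direct one-line appeal to the robustness of the subdifferential (Lemma~\ref{BD23}) fails --- is that the functions $\langle w_k, g\rangle$ change from term to term, so there is no single lower semicontinuous function whose subdifferential graph one could close up. Routing the argument through the $w$-independent closed set $\mathrm{gph}\, g$ is precisely what overcomes this obstacle, and the remaining verifications (continuity of $g$ forcing $g(x_k) \to g(\overline{x}),$ and closedness of $\mathrm{gph}\, g$) are immediate consequences of local Lipschitzness.
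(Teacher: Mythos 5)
Your proof is correct and takes essentially the same route as the paper's: scalarize via Lemma~\ref{BD28} to translate the hypothesis into $(v_k,-w_k)\in N_{\mathrm{gph}\, g}(x_k,g(x_k))$, pass to the limit using the robustness of the normal cone (Lemma~\ref{BD21}), and scalarize back. The extra verifications you supply (closedness of $\mathrm{gph}\, g$ and $g(x_k)\to g(\overline{x})$ from local Lipschitzness) simply make explicit what the paper leaves implicit.
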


\begin{proof}
Since $v_{k} \in \partial \langle w_k, g\rangle (x_k),$ it follows from Lemma~\ref{BD28} that $v_k \in D^* g(x_k)(w_k),$
which by definition is equivalent to $(v_k, -w_k) \in N_{\mathrm{gph} g}(x_k, g(x_k)).$ Letting $k \to \infty$ and applying Lemma~\ref{BD21}, we get $(\overline{v}, -\overline{w}) \in N_{\mathrm{gph} g}(\overline{x}, g(\overline{x})),$ and so $\overline{v} \in D^* g(\overline{x})(\overline{w}).$ By Lemma~\ref{BD28} again, $\overline{v} \in \partial \langle \overline{w}, g\rangle (\overline{x}),$ as required.
\end{proof}

\begin{proof}[Proof of Theorem~\ref{DL41}]
We first prove the equivalence of (i) with (ii). To see this, define the function $\psi \colon \mathbb{R}^n \to \mathbb{R}_+$ by
\begin{eqnarray*}
\psi(x) &:=& \mathrm{dist}(g(x), C) \quad \textrm{ for } \quad x \in \mathbb{R}^n,
\end{eqnarray*}
which is locally Lipschitz continuous and residual for $S.$ 
By Theorem~\ref{DL31}, it suffices to show the equivalence of (ii) with the following property:
\begin{enumerate}[{\rm ($\star$)}]{\em
\item there exists a constant $c_* > 0$ such that 
\begin{eqnarray*}
c_* \psi(x) &\ge& [f_* - f(x)]_+ \quad \textrm{ for all } \quad x \in \mathbb{R}^n.
\end{eqnarray*}
}\end{enumerate}
Indeed, the ($\star$) $\Rightarrow$ (ii) part is trivial. For the (ii) $\Rightarrow$ ($\star$) part we likewise proceed by contradiction.
Let $c_* > 0$ be such that 
\begin{eqnarray*}
\inf_{x \in \mathbb{R}^n}\left\{ f(x) + c_*\psi(x) \right\} &>& -\infty,
\end{eqnarray*}
and suppose that for each $k > c_*,$ there exists $x_k\in \mathbb{R}^n$ such that
\begin{eqnarray*}\label{equa-1}
k \psi(x_k) &<& [f_* - f(x_k)]_+.
\end{eqnarray*}
Since the function $\psi$ is nonnegative, $f_*-f(x_k) > 0.$ Moreover, if $\psi(x_k) = 0$ then $x_k \in S$ and so $f_* \le f(x_k),$ a contradiction. 
Therefore
\begin{eqnarray}\label{Eqn2}
0 &<& k \psi(x_k) \ < \ f_*-f(x_k).
\end{eqnarray}

Define the function $F_k \colon \mathbb{R}^n \to \mathbb{R}, x \mapsto F_k(x),$ by
\begin{eqnarray*}
F_k(x) &:=& f(x) + k \psi(x),
\end{eqnarray*}
which is locally Lipschitz continuous and satisfies
\begin{eqnarray*}
-\infty \ < \ \frak{m} :=  \inf_{x \in \mathbb{R}^n} \big\{ f(x) + c_*\psi(x) \big\} &\le& \inf_{x \in \mathbb{R}^n} F_k(x) \ < \ f_*,
\end{eqnarray*}
where the last inequality follows from \eqref{Eqn2}.

Put 
\begin{eqnarray*}
\varepsilon_k := f_* - \inf_{x \in \mathbb{R}^n} F_k(x)  \quad \textrm{ and } \quad \lambda_k := \tfrac{\|x_k\|+1}{2}.
\end{eqnarray*}
Then $0 < \varepsilon_k < f_* - \frak{m}$ and we deduce from \eqref{Eqn2} that
\begin{eqnarray*}
F_k(x_k) &<&  f_* \ = \ \inf_{x \in \mathbb{R}^n} F_{k}(x) + \varepsilon_k.
\end{eqnarray*}
By the Ekeland variational principle (see \cite{Ekeland1979}), there exists $x_k' \in \mathbb{R}^n$ having the following conditions:
\begin{enumerate}[({c}1)]
\item $F_k(x_k') \leq F_k(x_k) <  f_* = \inf_{x \in \mathbb{R}^n} F_k(x)+\varepsilon_k,$

\item $\|x_k' - x_k\|\leq \lambda_k,$ and

\item $F_k(x_k')\leq F_k(x)+\frac{\varepsilon_k}{\lambda_k}\|x - x_k'\|$ for all $x\in\mathbb{R}^n$.
\end{enumerate}

Note that $\psi (x_k') > 0$ because otherwise we would have $x_k' \in S,$ so that
\begin{eqnarray*}
F_k(x_k') &=& f(x_k') + k\psi(x_k') \ = \ f(x_k') \ \geq \ f_*
\end{eqnarray*}
in contradiction to the condition~(c1). Moreover, we have
\begin{eqnarray} \label{Eqn3}
\frak{m} &\le& f(x_k') + {c_*}\psi(x_k')  \ \le \ f(x_k') + k\psi(x_k')  \ < \ f_*.
\end{eqnarray}
It follows that
\begin{eqnarray*}
\frak{m} +  (k - c_*) \psi(x_k')  & \le & f_*.
\end{eqnarray*}
Consequently, $\psi(x_k')$ tends to $0$ as $k \to \infty.$ This when combined with \eqref{Eqn3} implies that the sequence $f(x_k')$ is bounded and has all its cluster points in the interval $[\frak{m}, f_*].$

From the condition~(c2), we easily deduce that
\begin{eqnarray} \label{Eqn4}
\frac{\|x_k\| - 1}{2} &\le& \|x_k'\| \ \le \ \frac{3\|x_k\| + 1}{2}.
\end{eqnarray}
In particular, the sequence $x_k$ is bounded if and only if the sequence $x_k'$ is bounded.

On the other hand, by the condition~(c3), $x_k'$ is a minimizer of the locally Lipschitz function
$$\mathbb{R}^n\to \mathbb{R}, \quad x\mapsto F_k(x)+\frac{\varepsilon_k}{\lambda_k}\|x - x_k'\|.$$
Lemma~\ref{FermatRule} (Fermat rule) when combined with Lemma~\ref{SumRule} (sum rule) yields
\begin{eqnarray}
0& \in & \partial \left[F_k +\frac{\varepsilon_k}{\lambda_k}\|\cdot - x_k'\|\right](x_k') \notag\\
& \subset & \partial  f(x_k') + k \partial \psi(x_k') + \frac{\varepsilon_k}{\lambda_k}\mathbb{B}. \notag
\end{eqnarray}
Note that $g(x_k') \not \in C$ because $\psi(x_k') > 0.$ Moreover, we know from Lemmas~\ref{BD22} and \ref{ChainRule} that
\begin{eqnarray*}
\partial \psi(x_k') &\subset& \bigcup_{w} \partial \langle w, g\rangle (x_k'),
\end{eqnarray*}
where the union is taken on all vectors $w$ belonging to the set $\partial \psi (g(x_k')),$ which is
\begin{eqnarray*}
\frac{g(x_k') - \Pi_{C}(g(x_k'))}{\psi(x_k')}.
\end{eqnarray*}
Therefore, we can find vectors $u_k \in \partial f(x_k')$ and $v_{k} \in \partial \langle w_k, g\rangle (x_k')$  
for some $w_k \in \partial \psi (g(x_k')) $ such that 
\begin{eqnarray}\label{Eqn5}
0 &\in& u_k + k v_k + \frac{\varepsilon_k}{\lambda_k} \mathbb{B}.
\end{eqnarray}
There are two cases to be considered.

\subsubsection*{Case 1: the sequence $\{x_k\}$ is bounded.} By \eqref{Eqn4}, the sequence $\{x_k'\}$ is bounded too.
Passing to a subsequence if necessary, we can suppose that $x_k'$ converges to some point $\overline{x} \in \mathbb{R}^n.$ Certainly, $\psi (\overline{x}) = 0$ (equivalently, $\overline{x} \in S$) and $f(\overline{x}) = f_*,$ which imply that $\overline{x}$ is a global minimizer of $f$ on $S.$ 

On the other hand, since the mappings $f$ and $g$ are Lipschitz continuous around $\overline{x}$ and $\|w_k\| = 1,$ it follows from Lemma~\ref{BD26}
that the sequences $u_k$ and $v_{k}$ must be bounded. So we may assume that the sequences $u_k, v_{k}$ and $w_k$ converge to vectors $\overline{u}, \overline{v}$ and $\overline{w},$ respectively. Certainly, $\overline{u} \in \partial f(\overline{x}),$ $\overline{v} \in \partial \langle \overline{w}, g \rangle (\overline{x})$ and $\overline{w} \in N_C(g(\overline{x}))$ with $\|\overline{w}\| = 1$ (due to Lemmas~\ref{BD21}, \ref{BD23} and \ref{BD41}). 

Observe that \eqref{Eqn5} can be rewritten as
\begin{eqnarray*}
0 &\in& \frac{1}{k} u_k + v_k + \frac{1}{k}  \frac{\varepsilon_k}{\lambda_k} \mathbb{B}.
\end{eqnarray*}
Equivalently,
\begin{eqnarray*}
\left\| \frac{1}{k} u_k + v_k \right \| & \le & \frac{1}{k}  \frac{\varepsilon_k}{\lambda_k} \ < \ 
\frac{2(f_* - \frak{m})}{k(\|x_k\| + 1)}.
\end{eqnarray*}
Letting $k \to \infty,$ one gets $\overline{v} = 0,$ in contradiction to (A1).

\subsubsection*{Case 2: the sequence $\{x_k\}$ is unbounded.} 
By \eqref{Eqn4}, the sequence $\{x_k'\}$ is also unbounded. Passing to a subsequence if necessary, we can suppose that $\|x_k'\| \to \infty$ as $k \to \infty.$ Note that \eqref{Eqn5} can be rewritten as
\begin{eqnarray*}
0 &\in& \frac{1}{{1 + k}} u_k + \frac{k}{{1 + k}} v_k + \frac{1}{{1 + k}} \cdot \frac{\varepsilon_k}{\lambda_k} \mathbb{B}.
\end{eqnarray*}
By definition, then
\begin{eqnarray*}
\nu(x_k') & \leq & 
\left \| \frac{1}{{1 + k}} u_k + \frac{k}{{1 + k}} v_k \right\| \ \le \
\frac{\varepsilon_k}{(1 + k)\lambda_k} \ < \ 
\frac{2(f_* - \frak{m})}{{(1 + k)} (\|x_k\| + 1)}.
\end{eqnarray*}
In combining this with \eqref{Eqn4}, we obtain
\begin{eqnarray*}
(\|x_k'\| + 1)\nu(x_k') & < & \frac{(f_* - \frak{m})}{3 ({1 + k})}.
\end{eqnarray*}
Thus, $\|x_k'\| \nu(x_k') \to 0$ as $k \to \infty.$ On the other hand, we have shown that $x_k' \not \in S,$ $\psi(x_k') \to 0$ as $k \to \infty$ 
and the sequence $f(x_k')$ has a cluster point, say, $t,$ which belongs to the interval $[\frak{m}, f_*].$ Certainly $t \in K_{\infty}(f, S),$ in contradiction to (A2). The equivalence of (ii) with ($\star$), and hence with (i), is proved.

Finally, assume that the property~(i) holds. By Theorem~\ref{DL31}, one has for any $c > c_*$ that
\begin{eqnarray*}
\mathrm{argmin}_{g(x) \in C} f(x) &=& \mathrm{argmin}_{\mathbb{R}^n} \big\{ f(x) + c\, \mathrm{dist}(g(x), C)  \big\}.
\end{eqnarray*}
Let $\overline{x}$ be a global minimum for the problem~\eqref{Problem0}. Then $\overline{x}$ is also a global minimum for the unconstrained optimization problem
\begin{equation*}
\textrm{minimize}_{x \in \mathbb{R}^n} \big\{f(x) + c\, \mathrm{dist}(g(x), C) \big\},
\end{equation*}
and so $0 \in \partial \big\{ f(\cdot) + c\, \mathrm{dist}(g(\cdot), C) \big\}(\overline{x})$ due to Lemma~\ref{FermatRule}.
This, together with Lemmas~\ref{BD22}, \ref{SumRule} and \ref{ChainRule}, yields
\begin{eqnarray*}
0 & \in & \partial f(\overline{x}) + c\, \partial \langle w, g\rangle (\overline{x})
\end{eqnarray*}
for some $w \in N_C(g(\overline{x})) \cap \mathbb{B},$ as required.
\end{proof}

The assumptions (A1) and (A2) cannot be omitted as shown below.

\begin{example}{\rm
(i) Let $n := 1, S := \{x \in \mathbb{R} \mid x^2 = 0\}$ and $f(x) := x^3.$ Clearly, for any $c > 0$ we have
$$\inf_{x \in S} f(x) = 0 > -\infty  = \inf_{x \in \mathbb{R}} \big\{ f(x) + c\, \psi(x) \big\},$$ 
where $\psi(x) := x^2.$ Observe that (A2) holds while (A1) does not.

(ii) Let $n := 2, S := \{x := (x_1, x_2) \in \mathbb{R}^2 \ | \ x_1 = 0\}$ and $f(x) := e^{x_1 x_2}.$ Clearly, for any $c > 0$ we have
$$\inf_{x \in S} f(x) = 1 > 0 = \inf_{x \in \mathbb{R}^2} \big\{ f(x) + c\, \psi(x) \big\},$$ 
where $\psi(x) := |x_1|.$ Observe that (A1) is satisfied, but (A2) is not.

(iii) Let $n := 2, S := \{x := (x_1, x_2) \in \mathbb{R}^2 \mid x_1 - x_2 = 0\}$ and $f(x) := x_1 - x_2.$ 
Observe that the assumption~(1.1) in \cite{Zaslavski2005} and the assumption~(1.6) in \cite{Zaslavski2013} do not satisfy, and so we can not apply the results in these papers. On the other hand, it is easy to see that the assumptions (A1) and (A2) hold, and indeed, for any $c \ge 1$ we have
$$\inf_{x \in S} f(x) = 0 = \inf_{x \in \mathbb{R}^2} \big\{ f(x) + c\, \psi(x) \big\},$$ 
where $\psi(x) := |x_1 - x_2|.$

(iv) Let $n := 2, S := \{x := (x_1, x_2) \in \mathbb{R}^2 \mid x_1 = 0\}$ and $f(x) := x_1^3 + x_2^2.$ For any $c > 0$ we have
$$\inf_{x \in S} f(x) = 0 > -\infty  = \inf_{x \in \mathbb{R}^2} \big\{ f(x) + c\, \psi(x) \big\},$$ 
where $\psi(x) := |x_1|.$ Observe that the assumptions (A1) and (A2) are satisfied.
}\end{example}

\begin{remark}{\rm
(i) Theorem~\ref{DL41} can be strengthened by replacing $f$ by $f_0 + \delta_X$ with $f_0  \colon \mathbb{R}^n \rightarrow \mathbb{R}$ being a locally Lipschitz function and $X$ being a closed subset of $\mathbb{R}^n.$
Since we do not use this fact, we leave the details for the interested reader.

(ii) The assumption~(A2) holds trivially when $K_{\infty}(f, S)$ is an empty set. Consequently, 
%Theorem~\ref{DL41} improves \cite[Theorem~4.2]{Contaldi1993} where it is assumed that $C = \mathbb{R}_{-}^{m},$ $f$ and $g$ are twice continuously differentiable functions, $f$ is {\em radially unbounded} on a perturbation of the constraint set $S$ and the linear independence constraint qualification holds at every point in $S.$ 
Theorem~\ref{DL41} improves \cite[Theorems~1.5~and~3.1]{Zaslavski2005} where it is assumed that $C = \{0\} \times \mathbb{R}_{-}^{m - p},$ $f$ and $g$ are locally Lipschitz, $f$ is {\em coercive} and the linear independence constraint qualification holds at every global minimum of $f$ on $S.$ In the finite-dimensional setting, Theorem~\ref{DL41} also improves \cite[Theorems~1.3]{Zaslavski2013}.
}\end{remark}

\section{Exact penalty functions: the semi-algebraic case} \label{Section5}

In this section we investigate exact penalty functions for constrained optimization problems with semi-algebraic data. The main tool for this study will be \L ojasiewicz inequalities on unbounded sets. Note that the results presented here are still valid for functions definable in a polynomially bounded o-minimal structure; however, to lighten the exposition, we do not pursue this extension here.

\subsection{\L ojasiewicz inequalities on unbounded sets}
In this subsection, we present some {\L}ojasiewicz inequalities on unbounded sets for (not necessarily continuous) semi-algebraic functions, and
for the convenience of the reader, we provide detailed proofs to make the paper self-contained. The results presented here, which are inspired by the work of Dinh, H\`a and Thao \cite{Dinh2012} (see also \cite{Denkowski2017-2, Kosiba2025, Lee2022, Loi2016}), will play a crucial role in establishing exact penalty properties in constrained optimization with semi-algebraic data. 

Let us start with the following definition.
\begin{definition}{\rm
Let $X$ be a subset of $\mathbb{R}^n$ and $\phi, \psi \colon X \rightarrow \mathbb{R}$ be functions. A sequence $\{x_k\}$ in $X$ is said to be
\begin{enumerate}[{\rm (i)}]
\item a {\em sequence of the first type} for the pair $(\phi, \psi)$ if $\psi(x_k) \to  0$ and $|\phi(x_k)| \ge \epsilon$ for some $\epsilon > 0;$
\item a {\em sequence of the second type} for the pair $(\phi, \psi)$ if the sequence $\psi(x_k)$ is bounded and $|\phi(x_k)| \to  \infty.$
\end{enumerate}
}\end{definition}

The following result improves \cite[Proposition~3.7]{Dinh2012} in which $X = \mathbb{R}^n,$ $\psi$ is the absolute value function of a  polynomial and $\phi$ is the distance function $x \mapsto \mathrm{dist}(x, \psi^{-1}(0)).$

\begin{lemma}\label{BD51}
Let $X$ be a semi-algebraic subset of $\mathbb{R}^n$ and $\phi, \psi \colon X \rightarrow \mathbb{R}$ be nonnegative and semi-algebraic functions. The following two conditions are equivalent:
\begin{enumerate}[{\rm (i)}]
\item there exist some constants $c > 0, \epsilon > 0$ and $\alpha \in (0, 1]$ such that
\begin{eqnarray*}
c\, [\psi(x)]^{\alpha}  &\ge& \phi(x) \quad \textrm{ for all } \quad x \in \psi^{-1}([0, \epsilon));
\end{eqnarray*}

\item there is no sequence of the first type for the pair $(\phi, \psi).$
\end{enumerate}
\end{lemma}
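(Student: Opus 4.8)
The plan is to prove the two implications separately; the contrapositive of (ii) $\Rightarrow$ (i), namely "(i) fails $\Rightarrow$ a sequence of the first type exists," is the substantive direction, while (i) $\Rightarrow$ (ii) is a short direct argument.

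I first dispose of (i) $\Rightarrow$ (ii). Suppose (i) holds with constants $c, \epsilon, \alpha,$ and suppose toward a contradiction that $\{x_k\}$ is a sequence of the first type. Then $\|x_k\| \to \infty,$ $\psi(x_k) \to 0,$ and $\phi(x_k) = |\phi(x_k)| \ge \delta$ for some $\delta > 0$ (using that $\phi$ is nonnegative). For $k$ large enough we have $\psi(x_k) < \epsilon,$ so $x_k \in \psi^{-1}([0,\epsilon))$ and the inequality in (i) gives $c\,[\psi(x_k)]^{\alpha} \ge \phi(x_k) \ge \delta.$ Letting $k \to \infty$ forces $0 \ge \delta > 0,$ a contradiction. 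Hence no sequence of the first type can exist.

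For the main direction I argue the contrapositive: assume (i) fails and produce a sequence of the first type. The idea is to reduce the $n$-variable inequality to a one-variable semi-algebraic growth problem and then apply the growth dichotomy lemma (Lemma~2.7(i)). Concretely, define the auxiliary function
\begin{eqnarray*}
h(t) &:=& \sup \big\{ \phi(x) \mid x \in X, \ \psi(x) = t \big\} \quad \textrm{ for } \quad t \ge 0.
\end{eqnarray*}
By the Tarski--Seidenberg theorem (Theorem~2.5) and the remark following it, $h$ is a semi-algebraic function of one variable (its graph is obtained from the graphs of $\phi, \psi$ by quantified projections), possibly taking the value $+\infty.$ The failure of (i) says precisely that for every choice of $c, \alpha \in (0,1], \epsilon,$ the bound $c\,t^{\alpha} \ge h(t)$ fails for some $t \in [0,\epsilon);$ in other words, $h(t)$ does not admit any upper bound of the form $c\,t^{\alpha}$ with $\alpha \le 1$ as $t \to 0^+.$ Using Lemma~2.6 (every semi-algebraic subset of $\mathbb{R}$ is a finite union of points and intervals) I first check that $h$ is finite on a punctured neighborhood $(0,\epsilon_0)$ of $0$; the remaining content is to analyze the behavior of $h(t)$ as $t \to 0^+.$ By the growth dichotomy lemma there are constants $a \ne 0$ and $\alpha_0 \in \mathbb{Q}$ with $h(t) = a\,t^{\alpha_0} + o(t^{\alpha_0})$ as $t \to 0^+.$ The failure of (i) is exactly the statement that $\alpha_0 < 0,$ i.e. $h(t) \to +\infty$ as $t \to 0^+.$

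It remains to convert the blow-up of $h$ into an actual sequence of the first type in $X.$ Taking $t_k \searrow 0$ with $h(t_k) \to +\infty,$ I choose, using the definition of $h$ as a supremum, points $x_k \in X$ with $\psi(x_k) = t_k$ (adjusting by the definition of supremum if it is not attained) and $\phi(x_k) \ge \tfrac{1}{2} h(t_k) \to +\infty.$ Then $\psi(x_k) = t_k \to 0$ and $|\phi(x_k)| = \phi(x_k) \to +\infty \ge \epsilon$ for every fixed $\epsilon.$ The one point still to be secured is that $\|x_k\| \to \infty$: this is where I expect the main obstacle. If the $x_k$ stayed in a bounded region, a subsequence would converge to some $\overline{x}$ with $\psi(\overline{x}) = 0$ while $\phi(x_k) \to \infty,$ so I must use semi-algebraicity to rule this out — concretely, by the curve selection / monotonicity machinery one shows that along the one-dimensional stratum realizing $h$ near $t = 0,$ the selected points necessarily escape to infinity because no finite accumulation point can carry an unbounded value of the semi-algebraic function $\phi.$ Formalizing this escape-to-infinity step (passing to a semi-algebraic curve $t \mapsto x(t)$ with $\psi(x(t)) = t,$ $\phi(x(t)) = h(t),$ and applying the growth dichotomy to $\|x(t)\|$) is the technical heart of the argument, and I would carry it out by parametrizing the relevant stratum and invoking Lemma~2.7 once more.
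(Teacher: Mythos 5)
Your (i) $\Rightarrow$ (ii) argument is correct, and your main device --- the level-set supremum $h(t)=\sup\{\phi(x)\mid \psi(x)=t\}$ combined with Tarski--Seidenberg, the finiteness of semi-algebraic subsets of $\mathbb{R}$, and the growth dichotomy lemma --- is exactly the function $\mu$ that the paper uses. The difference is that the paper argues (ii) $\Rightarrow$ (i) directly (it first extracts from (ii) the statement that $\phi(x)\to 0$ as $\psi(x)\to 0$, then bounds $\mu(t)\le 2a\,t^{\alpha}$ with $\alpha>0$), whereas you run the contrapositive, and this is where the gaps appear. First, your claim that the failure of (i) is \emph{exactly} $\alpha_0<0$, i.e.\ $h(t)\to+\infty$, is wrong: (i) also fails when $h(t)\to a>0$ (the case $\alpha_0=0$), and when some $x\in\psi^{-1}(0)$ has $\phi(x)>0$ (the inequality is demanded on $[0,\epsilon)$, which contains $0$, and $c\cdot 0^{\alpha}=0$). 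Since a sequence of the first type only requires $|\phi(x_k)|\ge\epsilon$, not $\phi(x_k)\to\infty$, all of these failure modes must be converted into such a sequence, not just the blow-up one.

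Second, the escape-to-infinity step that you yourself flag as the technical heart cannot be carried out, and no curve-selection argument will rescue it: take $X=\mathbb{R}$, $\phi\equiv 1$, $\psi(x)=|x|$. Then (i) fails (at $x=0$), yet every sequence with $\psi(x_k)\to 0$ is bounded, so under the literal definition there is no sequence of the first type; similarly $X=(0,1]$, $\phi(x)=1/x$, $\psi(x)=x$ shows that even genuine blow-up of $h$ is witnessed by bounded points. The paper's own proof sidesteps this only by asserting, as its ``Fact~1,'' that $\psi(x)\to 0$ forces $\phi(x)\to 0$ --- which amounts to reading ``sequence of the first type'' \emph{without} the requirement $\|x_k\|\to\infty$ (a reading consistent with how the lemma is later applied, e.g.\ in Corollary~\ref{HQ52}, where boundedness plus closedness makes the two readings agree). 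You should either adopt that stronger reading of (ii), in which case your witnesses $x_k$ need not escape to infinity and your argument closes once the extra failure modes above are added, or record explicitly that the equivalence is not provable under the definition as literally stated.
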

\begin{proof}
(i) $\Rightarrow$ (ii) The implication is straightforward.

(ii) $\Rightarrow$ (i) Since there is no sequence of the first type for the pair $(\phi, \psi),$ we have
\begin{enumerate}[{\em Fact} 1.]
\item {\em $\phi(x) \to 0$ as $\psi(x) \to 0;$ in particular, $\phi(x)  = 0$ provided that $\psi(x) = 0.$}
\end{enumerate}

On the other hand, in light of Theorem~\ref{TarskiSeidenbergTheorem}, $\psi(X)$ is a semi-algebraic set in $\mathbb{R},$ so it is a finite union of points and intervals (see Lemma~\ref{FiniteLemma}). Consequently, there exists a constant $\epsilon > 0$ such that either $(0, \epsilon) \cap \psi(X) = \emptyset$ or $(0, \epsilon) \subset \psi(X).$
In the first case, for all $x \in \psi^{-1}[0, \epsilon)$ we have $\psi(x) = 0,$ and by Fact~1, $\phi(x) = 0;$ consequently, (i) holds trivially for any choice of  $c > 0$ and $\alpha > 0.$ So, assume that the second case holds, i.e., $(0, \epsilon) \subset \psi(X).$ Then for all $t \in (0, \epsilon),$ the set $\psi^{-1}(t)$ is nonempty and so the function
$$\mu \colon (0, \epsilon) \to \mathbb{R} \cup \{\infty\}, \quad t\mapsto \sup_{x\in\psi^{-1}(t)}\phi(x),$$
is well defined. Observe that, by Theorem~\ref{TarskiSeidenbergTheorem}, the function $\mu$ is semi-algebraic.
Moreover, it follows from Fact~1 that $\mu(t) \to 0$ as $t \to 0^+.$ By Lemma~\ref{MonotonicityLemma} and decreasing $\epsilon$ if necessary, we can see that the function $\mu$ is finite and either constant or strictly monotone. There are two cases to consider.

\subsubsection*{Case 1.1:} $\mu$ is constant.

Then $\mu(t) = 0$ for all $t \in (0, \epsilon).$ It follows that for all $x \in \psi^{-1}(0, \epsilon),$
$$\psi(x) \geq 0 = \phi(x).$$
Then (i) holds trivially for any choice of $c > 0$ and $\alpha > 0.$

\subsubsection*{Case 1.2:} $\mu$ is not constant.

Thanks to Lemma~\ref{GrowthDichotomyLemma}(i), we can write
$$\mu(t) = a\, t^{\alpha} + o(t^{\alpha}) \quad \textrm{ as } \quad t \to 0^+$$
for some constants $a > 0$ and $\alpha \in \mathbb{Q}.$ Observe that $\alpha > 0$ because $\mu(t) \to 0$ as $t \to 0^+.$
Reducing $\epsilon$ and replacing $\alpha$ by $\min\{\alpha, 1\}$ (if necessary), we can see that
$$\mu(t)\leq 2 a\, t^{\alpha} \quad \textrm{ for } \quad t \in (0, \epsilon).$$
Hence (i) holds when $c := 2 a > 0.$
\end{proof}

The following corollary improves \cite[Lemma~3.2]{Lee2022} in which $X$ and $\phi$ are assumed to be compact and continuous, respectively;
see also \cite[Theorem~2.10]{Kosiba2025}.

\begin{corollary} \label{HQ51}
Let $X$ be a semi-algebraic subset of $\mathbb{R}^n$ and $\phi, \psi \colon X \rightarrow \mathbb{R}$ be nonnegative and semi-algebraic functions.
If in addition $\phi$ is bounded from above, then the following two conditions are equivalent:
\begin{enumerate}[{\rm (i)}]
\item there exist some constants $c > 0$ and $\alpha \in (0, 1]$ such that
\begin{eqnarray*}
c\, [\psi(x)]^{\alpha} &\ge& \phi(x) \quad \textrm{ for all } \quad x \in X;
\end{eqnarray*}

\item there is no sequence of the first type for the pair $(\phi, \psi).$
\end{enumerate}
\end{corollary}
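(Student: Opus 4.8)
The plan is to deduce the global inequality in (i) from the local inequality furnished by Lemma~\ref{BD51}, using the upper bound on $\phi$ to control the region where $\psi$ stays away from zero. The implication (i) $\Rightarrow$ (ii) needs no extra hypothesis: if $c\,[\psi(x)]^{\alpha} \ge \phi(x)$ holds on all of $X,$ then in particular it holds on each set $\psi^{-1}([0,\epsilon)),$ so Lemma~\ref{BD51} already rules out a sequence of the first type. (Directly: along such a sequence $\{x_k\}$ one would have $c\,[\psi(x_k)]^{\alpha} \to 0$ while $\phi(x_k) = |\phi(x_k)| \ge \epsilon,$ since $\phi \ge 0,$ a contradiction.)

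For the substantive direction (ii) $\Rightarrow$ (i), I would first invoke Lemma~\ref{BD51}: the absence of a sequence of the first type yields constants $c_1 > 0,$ $\epsilon > 0$ and $\alpha \in (0,1]$ with
\begin{eqnarray*}
c_1\, [\psi(x)]^{\alpha} &\ge& \phi(x) \quad \textrm{ for all } \quad x \in \psi^{-1}([0,\epsilon)).
\end{eqnarray*}
It then remains to extend this bound, with the same exponent $\alpha,$ to the complementary region $\psi^{-1}([\epsilon, \infty)).$

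This is where the boundedness hypothesis enters. Let $M > 0$ be an upper bound for $\phi$ on $X.$ For every $x$ with $\psi(x) \ge \epsilon$ one has $[\psi(x)]^{\alpha} \ge \epsilon^{\alpha}$ (because $\alpha > 0$), so that
\begin{eqnarray*}
\phi(x) &\le& M \ = \ \frac{M}{\epsilon^{\alpha}}\, \epsilon^{\alpha} \ \le \ \frac{M}{\epsilon^{\alpha}}\, [\psi(x)]^{\alpha}.
\end{eqnarray*}
Setting $c := \max\{c_1,\, M/\epsilon^{\alpha}\}$ then makes $c\,[\psi(x)]^{\alpha} \ge \phi(x)$ hold on both pieces, hence on all of $X,$ which is precisely (i).

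The argument is essentially bookkeeping once Lemma~\ref{BD51} is available; the only point that genuinely requires the new hypothesis is the estimate on $\psi^{-1}([\epsilon,\infty)),$ and I expect the sole (minor) obstacle to be verifying that the exponent $\alpha$ can be reused unchanged. This works precisely because on the region $\psi \ge \epsilon$ the power $[\psi(x)]^{\alpha}$ is bounded below by the positive constant $\epsilon^{\alpha},$ leaving only the multiplicative constant to be adjusted via a maximum.
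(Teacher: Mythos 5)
Your proposal is correct and follows essentially the same route as the paper: invoke Lemma~\ref{BD51} on $\psi^{-1}([0,\epsilon))$, use the upper bound $M$ on $\phi$ together with $[\psi(x)]^{\alpha}\ge\epsilon^{\alpha}$ on the complementary region, and take $c:=\max\{c_1, M/\epsilon^{\alpha}\}$. No gaps.
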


\begin{proof}
(i) $\Rightarrow$ (ii) The implication is straightforward.

(ii) $\Rightarrow$ (i) By Lemma~\ref{BD51}, we can find constants $c_1 > 0, \epsilon > 0$ and $\alpha \in (0, 1]$ such that
\begin{eqnarray*}
c_1[\psi(x)]^\alpha &\ge& \phi(x) \quad \textrm{ for all } \quad x \in \psi^{-1}([0, \epsilon)).
\end{eqnarray*}
On the other hand, since $\phi$ is bounded from above, there exists a constant $M > 0$ such that $\phi(x) \le M$ for all $x \in X.$ Hence for all $x \in \psi^{-1}([\epsilon, \infty))$ we have
\begin{eqnarray*}
[\psi(x)]^{\alpha}
& \ge & \epsilon^\alpha \ = \ \frac{\epsilon^\alpha}{M}M 
\ \ge \ \frac{\epsilon^\alpha}{M}\, \phi(x).
\end{eqnarray*}
Letting $c := \max\{c_1, \frac{M}{\epsilon^\alpha}\},$ we get the desired conclusion.
\end{proof}
Note here that the assumption of boundedness of the function $\phi$ cannot be omitted; see Example~\ref{VD51}.

\begin{corollary}\label{HQ52}
Let $A, B$ be closed semi-algebraic subsets of $\mathbb{R}^n$ such that $A \cap B \ne \emptyset.$ Then there exist some constants $c > 0$ and $\alpha \in (0, 1]$ such that
\begin{eqnarray*}
c\, \big(\mathrm{dist}(x, A) + \mathrm{dist}(x, B) \big)^\alpha &\ge& \frac{\mathrm{dist}(x, A \cap B)}{1 + \|x\|^2} \quad \textrm{ for all } \quad x \in \mathbb{R}^n.
\end{eqnarray*}
\end{corollary}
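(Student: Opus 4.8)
The plan is to realize this as a direct application of Corollary~\ref{HQ51} with a suitable choice of the pair $(\phi, \psi)$. Set
\begin{eqnarray*}
\psi(x) &:=& \mathrm{dist}(x, A) + \mathrm{dist}(x, B), \\
\phi(x) &:=& \frac{\mathrm{dist}(x, A \cap B)}{1 + \|x\|^2}, \qquad x \in \mathbb{R}^n.
\end{eqnarray*}
Both functions are nonnegative. Since $A$, $B$ and $A \cap B$ are semi-algebraic, the distance functions to them are semi-algebraic (the graph of each distance function is defined by a first-order formula in polynomial (in)equalities, hence semi-algebraic by the Tarski--Seidenberg theorem, Theorem~\ref{TarskiSeidenbergTheorem}); consequently $\phi$ and $\psi$ are semi-algebraic as well. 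The key structural fact I would record is the zero-set coincidence: since $A \cap B \ne \emptyset$, one has $\psi(x) = 0$ if and only if $x \in \mathrm{cl}\,A \cap \mathrm{cl}\,B$, and I will need $\psi^{-1}(0)$ to be contained in the set where $\phi = 0$.

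**Verifying the hypotheses of Corollary~\ref{HQ51}.**
First I would check that $\phi$ is bounded from above: this is where the normalizing factor $1 + \|x\|^2$ does its work. Indeed, fix a point $x_0 \in A \cap B$; then $\mathrm{dist}(x, A \cap B) \le \|x - x_0\| \le \|x\| + \|x_0\|$, so
\begin{eqnarray*}
\phi(x) &\le& \frac{\|x\| + \|x_0\|}{1 + \|x\|^2},
\end{eqnarray*}
and the right-hand side is a bounded function of $x$ on all of $\mathbb{R}^n$. Thus $\phi$ is bounded from above, and the extra hypothesis of Corollary~\ref{HQ51} is met.

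**Ruling out sequences of the first type.**
The heart of the argument, which I expect to be the main obstacle, is to show there is no sequence of the first type for $(\phi, \psi)$ --- that is, no sequence $\{x_k\}$ with $\|x_k\| \to \infty$, $\psi(x_k) \to 0$, and $|\phi(x_k)| \ge \epsilon$ for some fixed $\epsilon > 0$. Suppose such a sequence existed. From $\psi(x_k) \to 0$ we get $\mathrm{dist}(x_k, A) \to 0$ and $\mathrm{dist}(x_k, B) \to 0$ separately, so there are points $a_k \in A$ and $b_k \in B$ with $\|x_k - a_k\| \to 0$ and $\|x_k - b_k\| \to 0$, whence $\|a_k - b_k\| \to 0$. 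The subtlety is that $\|x_k\| \to \infty$, so I cannot simply pass to a convergent subsequence to land in $A \cap B$; the constraint sets are unbounded. This is precisely the unbounded-set difficulty the whole section is built to handle, and I would resolve it by invoking the semi-algebraic \L ojasiewicz-type machinery rather than compactness. The clean route is to apply Corollary~\ref{HQ51} to the auxiliary pair where the distance data are organized so that the growth dichotomy (Lemma~\ref{GrowthDichotomyLemma}) governs the behavior of $\mathrm{dist}(x, A \cap B)$ against $\mathrm{dist}(x,A) + \mathrm{dist}(x,B)$ along curves going to infinity; the normalizer $1 + \|x\|^2$ in the numerator exactly compensates for the possible polynomial blow-up of $\mathrm{dist}(x, A \cap B)$ relative to $\psi$ at infinity, forcing $\phi(x_k) \to 0$ and contradicting $|\phi(x_k)| \ge \epsilon$. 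With the nonexistence of first-type sequences established, Corollary~\ref{HQ51} yields constants $c > 0$ and $\alpha \in (0,1]$ with $c\,[\psi(x)]^\alpha \ge \phi(x)$ for all $x \in \mathbb{R}^n$, which is the claimed inequality.
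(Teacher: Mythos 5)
Your proposal sets up exactly the pair $(\phi,\psi)$ that the paper uses and applies Corollary~\ref{HQ51} in the same way, so the overall route is identical to the paper's. The one place you go wrong is in treating the absence of first-type sequences as ``the heart of the argument'': there is no difficulty there at all. Your own boundedness computation, $\phi(x) \le (\|x\| + \|x_0\|)/(1 + \|x\|^2)$ for a fixed $x_0 \in A \cap B$, shows that $\phi(x) \to 0$ as $\|x\| \to \infty$; since a sequence of the first type must by definition satisfy $\|x_k\| \to \infty$ and $\phi(x_k) \ge \epsilon > 0$, no such sequence can exist, full stop. This is precisely the paper's (one-line) justification. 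The paragraph you wrote in its place --- extracting points $a_k \in A$, $b_k \in B$, and then ``applying Corollary~\ref{HQ51} to the auxiliary pair where the distance data are organized so that the growth dichotomy governs the behavior along curves going to infinity'' --- is not an argument as it stands (and invoking Corollary~\ref{HQ51} to verify a hypothesis of Corollary~\ref{HQ51} would be circular). You do land on the correct assertion ($\phi(x_k) \to 0$, contradicting $|\phi(x_k)| \ge \epsilon$) at the end of that paragraph, and that assertion is already justified by the bound you derived earlier, so the proof is salvageable by deleting the detour; but as written the key step is supported by hand-waving rather than by the trivial observation that actually closes it.
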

\begin{proof}
Define the functions $\phi, \psi \colon \mathbb{R}^n \to \mathbb{R}$ by
\begin{eqnarray*}
\phi(x) := \frac{\mathrm{dist}(x, A \cap B)}{1 + \|x\|^2} \quad & \textrm{ and } & \quad 
\psi(x) := \mathrm{dist}(x, A) + \mathrm{dist}(x, B),
\end{eqnarray*}
which are nonnegative, continuous and semi-algebraic (by Theorem~\ref{TarskiSeidenbergTheorem}). Note that $\phi(x) = 0$ when $\psi(x) = 0$ and that $\phi(x) \to 0$ as $\|x\| \to \infty.$ Thus, the function $\phi$ is bounded from above and there is no sequence of the first type for the pair $(\phi, \psi).$ The desired conclusion follows directly from Corollary~\ref{HQ51}.
\end{proof}

\begin{lemma}\label{BD52}
Let $X$ be a semi-algebraic subset of $\mathbb{R}^n$ and $\phi, \psi \colon X \rightarrow \mathbb{R}$ be nonnegative and semi-algebraic functions. The following two conditions are equivalent:
\begin{enumerate}[{\rm (i)}]
\item there exist some constants $c > 0, R > 0$ and $\beta \ge 1$ such that
\begin{eqnarray*}
c\, [\psi(x)]^{\beta} &\ge& \phi(x) \quad \textrm{ for all } \quad x \in \psi^{-1}((R, \infty));
\end{eqnarray*}

\item there is no sequence of the second type for the pair $(\phi, \psi).$
\end{enumerate}
\end{lemma}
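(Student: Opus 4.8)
The plan is to mirror the structure of the proof of Lemma~\ref{BD51}, but now working ``at infinity'' rather than near zero, and to exploit the growth dichotomy lemma (Lemma~\ref{GrowthDichotomyLemma}(ii)) in place of its counterpart near the origin. The implication (i) $\Rightarrow$ (ii) should again be immediate: if a sequence $\{x_k\}$ of the second type existed, then $\psi(x_k)$ would be bounded while $\phi(x_k) \to \infty$, so for large $k$ we would have $x_k \in \psi^{-1}((R,\infty))$ only if $\psi(x_k) > R$, which need not hold---so one must be a little careful here. In fact, for a second-type sequence $\psi(x_k)$ is bounded, say $\psi(x_k) \le M$; the inequality in (i) holds only where $\psi > R$, so to derive a contradiction I would split into the cases $\psi(x_k) > R$ (where $c\,[\psi(x_k)]^\beta \le c M^\beta$ bounds $\phi(x_k)$, contradicting $\phi(x_k)\to\infty$) and $\psi(x_k)\le R$. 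The latter case requires separate handling, which suggests that the clean statement of (i) $\Rightarrow$ (ii) needs the bound on the full region $\psi$ bounded, not merely $\psi > R$; I expect the intended reading is that the bounded-$\psi$ behavior of $\phi$ is controlled, so I would argue that on $\psi^{-1}([0,R])$ a second-type sequence cannot blow up without already producing a sequence of the first type elsewhere, or invoke finiteness of $\mu$ below.

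For the substantive direction (ii) $\Rightarrow$ (i), I would first apply the Tarski--Seidenberg theorem (Theorem~\ref{TarskiSeidenbergTheorem}) to conclude that $\psi(X)$ is semi-algebraic in $\mathbb{R}$, hence (by Lemma~\ref{FiniteLemma}) a finite union of points and intervals. Thus there is a constant $R > 0$ such that either $(R,\infty)\cap\psi(X)=\emptyset$ or $(R,\infty)\subset\psi(X)$. In the first case the inequality in (i) holds vacuously on $\psi^{-1}((R,\infty))$. In the second case, for every $t\in(R,\infty)$ the fiber $\psi^{-1}(t)$ is nonempty, so I would define the semi-algebraic function
\begin{eqnarray*}
\mu \colon (R, \infty) \to \mathbb{R} \cup \{\infty\}, \quad t \mapsto \sup_{x \in \psi^{-1}(t)} \phi(x),
\end{eqnarray*}
whose semi-algebraicity again follows from Theorem~\ref{TarskiSeidenbergTheorem}. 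The absence of a second-type sequence translates precisely into the statement that $\mu(t)$ is finite for all large $t$ (otherwise one extracts points with $\phi$ large and $\psi$ bounded, i.e.\ a second-type sequence); after shrinking to a larger threshold $R$ and using the monotonicity lemma (Lemma~\ref{MonotonicityLemma}), $\mu$ is finite, analytic, and either constant or strictly monotone on $(R,\infty)$.

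The key computation is then the growth estimate at infinity. If $\mu$ is constant or bounded I can take $\beta = 1$ with a suitable $c$. Otherwise, Lemma~\ref{GrowthDichotomyLemma}(ii) gives $\mu(t) = b\, t^\beta + o(t^\beta)$ as $t\to\infty$ for some $b\neq 0$ and $\beta\in\mathbb{Q}$; since $\phi\ge 0$ and $\mu$ is unbounded we have $b > 0$, and replacing $\beta$ by $\max\{\beta,1\}$ ensures $\beta \ge 1$. Enlarging $R$ if necessary yields $\mu(t)\le 2b\,t^\beta$ for $t > R$, and taking $c := 2b$ gives $c\,[\psi(x)]^\beta \ge \mu(\psi(x)) \ge \phi(x)$ for all $x\in\psi^{-1}((R,\infty))$, as desired. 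I expect the main obstacle to be the bookkeeping around why $\mu$ is finite on a tail: one must verify that finiteness of each supremum, not merely of $\limsup$, follows from the no-second-type-sequence hypothesis, which is where I would argue most carefully---if some $\mu(t_0)=\infty$, a diagonal extraction within the fiber $\psi^{-1}(t_0)$ (where $\psi$ is exactly the bounded constant $t_0$) produces a second-type sequence, giving the contradiction.
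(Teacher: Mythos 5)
Your proof of the substantive direction (ii) $\Rightarrow$ (i) is essentially the paper's own argument: Tarski--Seidenberg plus Lemma~\ref{FiniteLemma} to arrange that $(R,\infty)$ is either disjoint from or contained in $\psi(X)$, the fiber supremum $\mu(t)=\sup_{x\in\psi^{-1}(t)}\phi(x)$, the monotonicity lemma to make $\mu$ constant or strictly monotone on a tail, and the growth dichotomy at infinity with $\beta$ replaced by $\max\{\beta,1\}$. Two small remarks: non-constancy of $\mu$ does not imply unboundedness (your stated justification for $b>0$), but $b>0$ follows anyway from $\mu\ge 0$ together with $\mu$ not being eventually zero, and your ``constant or bounded'' branch covers the remaining case; also, your argument that each $\mu(t_0)$ is finite (extracting a second-type sequence from a fiber on which $\phi$ is unbounded) tacitly assumes the extracted points escape to infinity --- the paper's Fact~2 carries the same tacit assumption, so this is not a point of divergence.

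Where you depart from the paper is on (i) $\Rightarrow$ (ii), and your unease is justified but your patch does not work. The paper dismisses this direction as straightforward; as literally stated it is in fact false: take $X=\mathbb{R}^2$, $\psi(x_1,x_2):=|x_1|$ and $\phi(x_1,x_2):=[\,|x_2|(1-x_1^2)\,]_+$. Then $\phi\equiv 0$ on $\psi^{-1}((1,\infty))$, so (i) holds vacuously, while $x_k:=(0,k)$ is a sequence of the second type. Your proposed repairs --- that a second-type sequence confined to $\psi^{-1}([0,R])$ would ``already produce a sequence of the first type,'' or would contradict finiteness of $\mu$ --- fail because the first-type condition requires $\psi(x_k)\to 0$, which need not hold, and because $\mu$ is only defined (and only controlled by (i)) on $(R,\infty)$. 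The honest conclusion is that the implication needs $\phi$ to be controlled wherever $\psi$ is bounded, not merely where $\psi>R$; this is exactly what is supplied when Lemma~\ref{BD52} is combined with Lemma~\ref{BD51} and Fact~2 in Proposition~\ref{MD51}, and only the (ii) $\Rightarrow$ (i) direction is used downstream, so the applications are unaffected.
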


\begin{proof}
(i) $\Rightarrow$ (ii) The implication is straightforward.

(ii) $\Rightarrow$ (i) Since there is no sequence of the second type, we have
\begin{enumerate}[{\em Fact} 2.]
\item {\em For any $a, b \in \mathbb{R}$ with $0 \leq a \le b < \infty,$ there exists a constant $M > 0$ such that $\phi(x) \leq M$ for all $x \in \psi^{-1}([a, b]).$}
\end{enumerate}

On the other hand, in light of Theorem~\ref{TarskiSeidenbergTheorem}, $\psi(X)$ is a semi-algebraic set in $\mathbb{R},$ so it is a finite union of points and intervals (see Lemma~\ref{FiniteLemma}). Consequently, there exists a constant $R > 0$ such that either $(R, \infty) \cap \psi(X) = \emptyset$ or $(R, \infty) \subset \psi(X).$
If the first case happens, then (i) holds trivially for any choice of $c > 0$ and $\beta > 0$ and there is nothing to prove. Hence, assume that $(R, \infty) \subset \psi(X).$ Then for all $t \in (R, \infty),$ the set $\psi^{-1}(t)$ is nonempty. It follows from Fact~2 that the function
$$\mu \colon (R, \infty) \to \mathbb{R}, \quad t\mapsto \sup_{x\in\psi^{-1}(t)}\phi(x),$$
is well defined. Observe that, by Theorem~\ref{TarskiSeidenbergTheorem}, the function $\mu$ is semi-algebraic.
By Lemma~\ref{MonotonicityLemma} and increasing $R$ if necessary, we can see that the function $\mu$ is either constant or strictly monotone. There are two cases to consider.

\subsubsection*{Case 2.1:} $\mu$ is constant, say, $c'.$

Then $c' \geq 0$ and for all $x \in \psi^{-1}(R, \infty)$ we have
$$\psi(x) \geq R = \frac{R}{c' + 1} (c' + 1) \geq \frac{R}{c' + 1} \phi(x).$$
Then (i) holds trivially when $c := \frac{c' + 1}{R}$ and $\beta := 1.$

\subsubsection*{Case 2.2:} $\mu$ is not constant.

Thanks to Lemma~\ref{GrowthDichotomyLemma}(ii), we can write
$$\mu(t) = b\, t^{\beta} + o(t^{\beta}) \quad \textrm{ as } \quad t \to \infty$$
for some constants $b > 0$ and $\beta\in \mathbb{Q}.$ Increasing $R$ and replacing $\beta$ by $\max\{\beta, 1\}$ (if necessary), we can see that
$$\mu(t) \leq 2 b \, t^{\beta} \quad \textrm{ for } \quad t \in (R, \infty).$$
Hence (i) holds when $c := 2 b > 0.$
\end{proof}

The following result improves \cite[Proposition~3.10]{Dinh2012} in which $X = \mathbb{R}^n,$ $\psi$ is the absolute value function of a  polynomial and $\phi$ is the distance function $x \mapsto \mathrm{dist}(x, \psi^{-1}(0)).$

\begin{proposition}\label{MD51}
Let $X$ be a semi-algebraic subset of $\mathbb{R}^n$ and $\phi, \psi \colon X \rightarrow \mathbb{R}$ be nonnegative and semi-algebraic functions. The following two conditions are equivalent:
\begin{enumerate}[{\rm (i)}]
\item there exist some constants $c > 0, \alpha > 0,$ and $\beta > 0$ with $\alpha \le 1 \le \beta$ such that
\begin{eqnarray*}
c\left( [\psi(x)]^{\alpha}  + [\psi(x)]^{\beta} \right)   &\ge& \phi(x) \quad \textrm{ for all } \quad x \in X;
\end{eqnarray*}

\item there are no sequences of the first and second types for the pair $(\phi, \psi).$
\end{enumerate}
\end{proposition}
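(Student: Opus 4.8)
The plan is to deduce Proposition~\ref{MD51} by combining the two previously established dichotomy lemmas, Lemma~\ref{BD51} (which handles sequences of the first type via an exponent $\alpha \in (0,1]$ valid near $\psi = 0$) and Lemma~\ref{BD52} (which handles sequences of the second type via an exponent $\beta \ge 1$ valid for large $\psi$). The implication (i) $\Rightarrow$ (ii) is the easy direction and I would dispose of it first: if such an inequality holds globally, then along any candidate sequence of the first type one has $\psi(x_k) \to 0$, forcing the right-hand side $c([\psi(x_k)]^\alpha + [\psi(x_k)]^\beta)$ to $0$ and hence $\phi(x_k) \to 0$, contradicting $|\phi(x_k)| \ge \epsilon$; similarly, along a sequence of the second type $\psi(x_k)$ stays bounded, so the right-hand side stays bounded while $\phi(x_k) \to \infty$, again a contradiction.

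For the substantive direction (ii) $\Rightarrow$ (i), I would argue as follows. Since there is no sequence of the first type, Lemma~\ref{BD51} yields constants $c_1 > 0$, $\epsilon > 0$, and $\alpha \in (0,1]$ with
\begin{eqnarray*}
c_1\,[\psi(x)]^{\alpha} &\ge& \phi(x) \quad \textrm{ for all } \quad x \in \psi^{-1}([0, \epsilon)).
\end{eqnarray*}
Since there is no sequence of the second type, Lemma~\ref{BD52} yields constants $c_2 > 0$, $R > 0$, and $\beta \ge 1$ with
\begin{eqnarray*}
c_2\,[\psi(x)]^{\beta} &\ge& \phi(x) \quad \textrm{ for all } \quad x \in \psi^{-1}((R, \infty)).
\end{eqnarray*}
These two estimates already cover the two unbounded tails of the range of $\psi$; what remains is the compact middle band where $\psi(x) \in [\epsilon, R]$ (assuming without loss of generality $\epsilon \le R$, which can be arranged by shrinking $\epsilon$ or enlarging $R$).

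The key step is therefore to control $\phi$ on the band $\psi^{-1}([\epsilon, R])$ and then to glue the three estimates into one inequality of the stated form valid on all of $X$. On the band, I would observe that \emph{no} sequence of the second type exists, so by Fact~2 (invoked with $a := \epsilon$, $b := R$) there is a constant $M > 0$ with $\phi(x) \le M$ for all $x \in \psi^{-1}([\epsilon, R])$; on this band $[\psi(x)]^{\beta} \ge \epsilon^{\beta}$, whence $\phi(x) \le M = \frac{M}{\epsilon^\beta}\,\epsilon^\beta \le \frac{M}{\epsilon^\beta}[\psi(x)]^{\beta}$. Finally, setting $c := \max\{c_1, c_2, M/\epsilon^\beta\}$ and using that both $[\psi(x)]^\alpha$ and $[\psi(x)]^\beta$ are nonnegative, I would check case by case (according to whether $\psi(x)$ lies in $[0,\epsilon)$, $[\epsilon, R]$, or $(R,\infty)$) that in each regime one of the three bounds applies and is dominated by $c([\psi(x)]^\alpha + [\psi(x)]^\beta)$, since dropping the unused power only increases the right-hand side.

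The main obstacle I anticipate is not any single estimate but the \emph{bookkeeping of the middle band and the merging of exponents}: Lemma~\ref{BD51} and Lemma~\ref{BD52} each guarantee the bound only on a tail of the range, and one must ensure the intermediate region $\psi(x) \in [\epsilon, R]$ is genuinely handled (this is where boundedness of $\phi$ on the band, coming from the absence of second-type sequences, is essential — it is precisely the hypothesis that fails in Example~\ref{VD41}). One should also be careful that the same single pair of exponents $(\alpha, \beta)$ with $\alpha \le 1 \le \beta$ works simultaneously in all three regimes; this is automatic because $\alpha$ is only used where $\psi$ is small (so the smaller power dominates) and $\beta$ only where $\psi$ is large (so the larger power dominates), and on the compact band either power is bounded below by a positive constant.
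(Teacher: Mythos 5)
Your proposal is correct and follows essentially the same route as the paper: apply Lemma~\ref{BD51} on $\psi^{-1}([0,\epsilon))$, Lemma~\ref{BD52} on $\psi^{-1}((R,\infty))$, bound $\phi$ on the middle band $\psi^{-1}([\epsilon,R])$ via Fact~2, and take the maximum of the three constants. The only (immaterial) difference is that on the band you compare $\phi$ against $[\psi(x)]^{\beta}\ge\epsilon^{\beta}$ alone, while the paper uses the sum $[\psi(x)]^{\alpha}+[\psi(x)]^{\beta}\ge\epsilon^{\alpha}+\epsilon^{\beta}$.
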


\begin{proof}
(i) $\Rightarrow$ (ii) The implication is straightforward.

(ii) $\Rightarrow$ (i) By Lemma~\ref{BD51}, there exist some constants $c_1 > 0, \epsilon > 0$ and $\alpha \in (0, 1]$ such that
\begin{eqnarray*}
c_1\, [\psi(x)]^{\alpha}  &\ge& \phi(x) \quad \textrm{ for all } \quad x \in \psi^{-1}([0, \epsilon)).
\end{eqnarray*}
In view of Lemma~\ref{BD52}, there exist some constants $c_2 > 0, R > \epsilon$ and $\beta \ge 1$ such that
\begin{eqnarray*}
c_2\, [\psi(x)]^{\beta} &\ge& \phi(x) \quad \textrm{ for all } \quad x \in \psi^{-1}((R, \infty)).
\end{eqnarray*}
On the other hand, it follows from Fact~2 in the proof of Lemma~\ref{BD52} that there exists a constant $M > 0$ such that for all $x \in \psi^{-1}([\epsilon, R])$ we have $\phi(x) \le M,$ and hence 
\begin{eqnarray*}
[\psi(x)]^{\alpha} + [\psi(x)]^{\beta} 
& \ge & \epsilon^\alpha + \epsilon^\beta \ = \ \frac{\epsilon^\alpha + \epsilon^\beta}{M}M 
\ \ge \ \frac{\epsilon^\alpha + \epsilon^\beta}{M}\, \phi(x).
\end{eqnarray*}
Letting $c := \max\{c_1, c_2, \frac{M}{\epsilon^\alpha + \epsilon^\beta}\},$ we get the desired conclusion.
\end{proof}

The following simple example shows that the exponents $\alpha$ and $\beta$ in Proposition~\ref{MD51} are different in general.
\begin{example}\label{VD51}{\rm 
Consider the polynomial functions $\phi(x_1, x_2) := x_1^2 + x_2^2$ and $\psi(x_1, x_2) := x_1^2 + x_2^4$ on $\mathbb{R}^2.$ 
It is not hard to see that there are no constants $c > 0$ and $\alpha > 0$ such that
\begin{eqnarray*}
c\,[\psi(x_1, x_2)]^{\alpha} &\geq& \phi(x_1, x_2) \quad \textrm{ for all } \quad (x_1, x_2) \in \mathbb{R}^2.
\end{eqnarray*}
On the other hand, it holds that
\begin{eqnarray*}
[\psi(x_1, x_2)]^{\frac{1}{2}} + \psi(x_1, x_2) &\geq& \phi(x_1, x_2) \quad \textrm{ for all } \quad (x_1, x_2) \in \mathbb{R}^2.
\end{eqnarray*}
}\end{example}

We close this subsection by presenting a necessary and sufficient condition for a semi-algebraic function to have a global H\"olderian error bound; this result improves \cite[Theorem~3.4]{HaHV2017}, where the continuity of the function in question is assumed.

\begin{corollary}
Let $f \colon \mathbb{R}^n \rightarrow \mathbb{R}$ be a semi-algebraic function, and $S := \{x \in \mathbb{R}^n \mid f(x) \le 0\} \ne \emptyset.$ The following two conditions are equivalent:
\begin{enumerate}[{\rm (i)}]
\item there exist some constants $c > 0, \alpha > 0,$ and $\beta > 0$ with $\alpha \le 1 \le \beta$ such that
\begin{eqnarray*}
c\left( [f(x)]_+^{\alpha}  + [f(x)]_+^{\beta} \right)   &\ge& \mathrm{dist}(x, S), \quad \textrm{ for all } \quad x \in \mathbb{R}^n;
\end{eqnarray*}

\item for any sequence of points $x_k$ in $\mathbb{R}^n$ one has
\begin{enumerate}[{\rm ({ii}1)}]
\item if $f(x_k) \to 0,$ then $\mathrm{dist}(x, S) \to 0;$
\item if $\mathrm{dist}(x, S) \to \infty,$ then $f(x_k) \to \infty.$
\end{enumerate}
\end{enumerate}
\end{corollary}
\begin{proof}
This applies Proposition~\ref{MD51} to the semi-algebraic  functions $\psi(x) := [f(x)]_+$ and $\phi(x) := \mathrm{dist}(x, S)$ for $x \in \mathbb{R}^n.$
\end{proof}

\subsection{Necessary and sufficient conditions for exact penalization}

The results presented in this subsection are inspired by the works of Warga~\cite[Theorem~1]{Warga1992}, Dedieu \cite[Theorem~3.1]{Dedieu1992} and Luo, Pang and Ralph \cite[Chapter~2]{Luo1996}. Indeed, consider the problem~\eqref{Problem} with $S$ being a nonempty {\em semi-algebraic} subset of $\mathbb{R}^n$ and $f \colon \mathbb{R}^n \rightarrow \overline{\mathbb{R}}$ being a {\em semi-algebraic} function. Assume that the optimal value $f_* := \inf_S f$ is finite and let $\psi \colon \mathbb{R}^n \to \mathbb{R}_+$ be a residual function for $S$ which is {\em semi-algebraic.} Let's start with the following.

\begin{remark}{\rm
If for some $c_* > 0$ we have
\begin{eqnarray*}
\inf_{x \in S} f(x) &=& \inf_{x \in \mathbb{R}^n} \big\{ f(x) + c_*\, \psi(x) \big\},
\end{eqnarray*}
then by Theorem~\ref{DL31}, for any sequence of points $x_k$ in $\mathbb{R}^n$ with $\psi(x_k) \to 0$ the sequence $[f_* - f(x_k)]_+$ must converge to $0.$ The converse does not hold in general; see Example~\ref{VD52}(ii) below. On the other hand, we have the following result.
}\end{remark}

\begin{theorem} \label{DL51}
Assume that for any sequence of points $x_k$ in $\mathbb{R}^n$ with $\psi(x_k) \to 0$ one has $[f_* - f(x_k)]_+ \to 0.$ Then there exist constants $c_*  > 0$ and $\alpha \in (0, 1]$ such that for all $c > c_*,$
\begin{eqnarray*}
\inf_{x \in S} f(x) &=& \inf_{x \in \mathbb{R}^n} \big\{f(x) + c\, \big(1 + [f(x)]^2 \big)[\psi(x)]^\alpha\big\}, \\
\mathrm{argmin}_S f(x) &=& \mathrm{argmin}_{\mathbb{R}^n} \big\{f(x) + c\, \big(1 + [f(x)]^2 \big)[\psi(x)]^\alpha\big\}.
\end{eqnarray*}
\end{theorem}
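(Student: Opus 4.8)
The plan is to apply Theorem~\ref{DL31} to the modified function $\widetilde\psi(x) := (1 + [f(x)]^2)\,[\psi(x)]^\alpha$ for a suitable exponent $\alpha \in (0,1]$, and to produce the required power of $\psi$ from the \L ojasiewicz-type estimate in Corollary~\ref{HQ51}. First I would note that for every $\alpha > 0$ the function $\widetilde\psi$ is a nonnegative semi-algebraic residual function for $S$: since $1 + [f(x)]^2 > 0$ everywhere, $\widetilde\psi(x) = 0$ if and only if $\psi(x) = 0$, that is, $x \in S$. By the equivalence (iii)$\Leftrightarrow$(i) in Theorem~\ref{DL31} applied to $\widetilde\psi$, together with its concluding statement on argmin sets, it suffices to find constants $c_* > 0$ and $\alpha \in (0,1]$ with
\begin{equation*}
c_*\,(1 + [f(x)]^2)\,[\psi(x)]^\alpha \ \ge\ [f_* - f(x)]_+ \qquad \text{for all } x \in \mathbb{R}^n ;
\end{equation*}
once this is secured, both displayed identities hold for every $c > c_*$.

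To obtain this inequality I would rewrite it as $c_*\,[\psi(x)]^\alpha \ge \phi(x)$ with $\phi(x) := [f_* - f(x)]_+/(1 + [f(x)]^2)$, and apply Corollary~\ref{HQ51} to the pair $(\phi, \psi)$ on the semi-algebraic set $X := \mathrm{dom} f$ (semi-algebraic by the Tarski--Seidenberg theorem, being a projection of $\mathrm{gph} f$). On $X$ the function $f$ is finite, so $\phi$ and $\psi$ are nonnegative semi-algebraic functions there. The two hypotheses of Corollary~\ref{HQ51} then have to be verified. Boundedness of $\phi$ from above holds because $\phi(x) = g(f(x))$ with $g(s) := [f_* - s]_+/(1 + s^2)$ continuous on $\mathbb{R}$ and tending to $0$ as $s \to \pm\infty$, hence bounded; this is precisely the purpose of the weight $1 + [f(x)]^2$. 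The absence of a sequence of the first type for $(\phi, \psi)$ is where the standing assumption enters: if $x_k \in X$ satisfies $\|x_k\| \to \infty$ and $\psi(x_k) \to 0$, then by hypothesis $[f_* - f(x_k)]_+ \to 0$, and since $1 + [f(x_k)]^2 \ge 1$ we get $\phi(x_k) \le [f_* - f(x_k)]_+ \to 0$, so $|\phi(x_k)|$ cannot stay bounded below. Corollary~\ref{HQ51} then produces $c_* > 0$ and $\alpha \in (0,1]$ with $c_*\,[\psi(x)]^\alpha \ge \phi(x)$ for all $x \in X$; multiplying through by $1 + [f(x)]^2$ gives the displayed inequality on $\mathrm{dom} f$.

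Finally I would extend the inequality from $\mathrm{dom} f$ to all of $\mathbb{R}^n$: for $x \notin \mathrm{dom} f$ one has $f(x) = \infty$, whence $[f_* - f(x)]_+ = 0$ and the inequality is trivial; moreover such points contribute the value $\infty$ to the penalized objective, so they are irrelevant both to its infimum (which is finite) and to its argmin set, which also resolves the bookkeeping at points where $f = \infty$. With the inequality valid on all of $\mathbb{R}^n$, Theorem~\ref{DL31} applied to $\widetilde\psi$ yields exactly the two claimed identities for every $c > c_*$. I expect the main obstacle to be not the reduction itself but the verification that $\phi$ is bounded from above: without the factor $1 + [f(x)]^2$ the residual comparison $[f_* - f]_+$ need not be bounded along directions where $f \to -\infty$, so Corollary~\ref{HQ51} would fail and one could only invoke the local estimate of Lemma~\ref{BD51} near $\psi^{-1}(0)$; the weight is exactly what upgrades that local \L ojasiewicz inequality to the global one needed here.
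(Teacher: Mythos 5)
Your proof is correct, and it reaches the key inequality $c_*\big(1+[f(x)]^2\big)[\psi(x)]^\alpha \ge [f_*-f(x)]_+$ by a more direct route than the paper. The paper works in the image plane: it sets $A := (-\infty, f_*]\times\{0\}$ and $B := \mathrm{cl}\,(f,\psi)(\mathrm{dom} f)$, notes that the hypothesis forces $A \cap B = \{(f_*,0)\}$, invokes Corollary~\ref{HQ52} for this pair of planar semi-algebraic sets, and then evaluates the resulting inequality at $y=(f(x),0)$, using $\mathrm{dist}(y,B)\le\psi(x)$ and $\mathrm{dist}(y,A\cap B)=f_*-f(x)$. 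You instead apply Corollary~\ref{HQ51} directly to $\phi := [f_*-f]_+/(1+f^2)$ and $\psi$ on $X := \mathrm{dom} f$. Since Corollary~\ref{HQ52} is itself deduced from Corollary~\ref{HQ51} via exactly this kind of $1/(1+\|\cdot\|^2)$ normalization, the two arguments rest on the same mechanism, but yours skips the detour through $\mathbb{R}^2$ and the closure operation (and in particular does not need to verify $A\cap B\ne\emptyset$). The hypothesis is used in the same way in both proofs --- by the paper to shrink $A\cap B$ to the single point $(f_*,0)$, by you to exclude sequences of the first type --- and both hinge on the weight $1+[f(x)]^2$ making the quotient bounded, which you correctly identify as the heart of the matter. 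Your explicit treatment of points outside $\mathrm{dom} f$, and of why $(1+[f(x)]^2)[\psi(x)]^\alpha$ can legitimately be fed into Theorem~\ref{DL31} as a residual function, addresses a technicality the paper leaves implicit, and it is handled correctly.
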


\begin{proof}
Let $A := (-\infty, f_*] \times \{0\}$ and $B := \mathrm{cl} (f, \psi)(\mathrm{dom} f).$ Then $A$ and  $B$ are closed semi-algebraic sets in $\mathbb{R}^2.$ Moreover, our assumption yields $A \cap B = \{(f_*, 0)\}.$ In view of Corollary~\ref{HQ52}, there are some constants $c_* > 0$ and $\alpha \in (0, 1]$ such that
\begin{eqnarray*}
c_* \big(\mathrm{dist}(y, A) + \mathrm{dist}(y, B) \big)^\alpha &\ge& \frac{\mathrm{dist}(y, A \cap B)}{1 + \|y\|^2} \quad \textrm{ for all } \quad y \in \mathbb{R}^2.
\end{eqnarray*}
Take $x \in \mathbb{R}^n$ with $f(x) < f_*$ and let $y := (f(x), 0).$ We have
\begin{eqnarray*}
\mathrm{dist}(y, A) &=& 0, \\
\mathrm{dist}(y, B) &\le & \|(f(x), 0) - (f(x), \psi(x))\| = \psi(x), \\
\mathrm{dist}(y, A \cap B) &=& \|(f(x), 0) - (f_*, 0)\| = f_* - f(x).
\end{eqnarray*}
Hence
\begin{eqnarray*}
c_* [\psi(x)]^\alpha &\ge& \frac{f_* - f(x)}{1 + [f(x)]^2},
\end{eqnarray*}
or equivalently,
\begin{eqnarray*}
c_*{\big(1 + [f(x)]^2 \big)} [\psi(x)]^\alpha &\ge& {f_* - f(x)}.
\end{eqnarray*}
Clearly, this inequality is true for any $x \in \mathbb{R}^n$ with $f(x) \ge f_*.$ By Theorem~\ref{DL31}, we get the desired conclusion.
\end{proof}

\begin{remark}{\rm
(i) Theorem~\ref{DL51} can fail when ``semi-algebraic" is replaced by ``subanalytic'' (cf. \cite{Bierstone1988} for this notion); see Example~\ref{VD53}.

(ii) In general, the converse of Theorem~\ref{DL51} does not hold and the term $(1 + [f(x)]^2)$ cannot be omitted; see the next example.
}\end{remark}

\begin{example}\label{VD52}{\rm
(i) Let $n := 1, S := \{ 0\}$ and consider the semi-algebraic functions
\begin{eqnarray*}
f(x) &:=& x \quad \textrm{ and } \quad \psi(x) \ :=\ \frac{|x|} {1 + x^2}.
\end{eqnarray*}
Then $\psi$ is a residual function for $S$ and it holds that
\begin{eqnarray*}
\inf_{x \in S} f(x) &=& \inf_{x \in \mathbb{R}} \big\{ f(x) + (1 + [f(x)]^2) [\psi(x)] \big\},
\end{eqnarray*}
while
\begin{eqnarray*}
\psi(x) \to 0 \quad \textrm{ and } \quad [f_* - f(x)]_+ \to \infty \quad \textrm{ as } \ x \to -\infty.
\end{eqnarray*}

(ii) Let $n := 1, S := \{ 0\}$ and $f(x) := x.$ Consider the semi-algebraic function $\psi \colon \mathbb{R} \to \mathbb{R}_+$ defined by
\begin{eqnarray*}
\psi(x) &:=& 
\begin{cases}
|x| & \textrm{ if } |x| \le 1, \\
1 & \textrm{ otherwise,}
\end{cases}
\end{eqnarray*}
which is a residual function for $S.$ A direct calculation shows that 
\begin{eqnarray*}
\inf_{x \in S} f(x) &=& \inf_{x \in \mathbb{R}} \big\{ f(x) + (1 + [f(x)]^2) [\psi(x)] \big\}, 
\end{eqnarray*}
while for any $c > 0$ and any $\alpha > 0,$
\begin{eqnarray*}
\inf_{x \in S} f(x) &=& 0 \ > \ - \infty \ = \ \inf_{x \in \mathbb{R}} \big\{f(x) + c\, [\psi(x)]^\alpha \big\}.
\end{eqnarray*}
}\end{example}

The following result shows that the term $(1 + [f(x)]^2)$ can be removed provided that the function $f$ is bounded from below on $\mathbb{R}^n.$
\begin{theorem} \label{DL52}
If $f$ is bounded from below on $\mathbb{R}^n,$ then the following properties are equivalent:
\begin{enumerate}[{\rm (i)}]
\item there exist constants $c_* > 0$ and $\alpha \in (0, 1]$ such that for all $c > c_*,$
\begin{eqnarray*}
\inf_{x \in S} f(x) &=& \inf_{x \in \mathbb{R}^n} \big\{ f(x) + c\, [\psi(x)]^{\alpha} \big\};
\end{eqnarray*}

\item there exist constants $c_* > 0$ and $\alpha \in (0, 1]$ such that
\begin{eqnarray*}
c_* [\psi(x)]^{\alpha} &\ge& [f_* - f(x)]_+ \quad \textrm{ for all } \quad x \in \mathbb{R}^n;
\end{eqnarray*}

\item for any sequence of points $x_k$ in $\mathbb{R}^n$ with $\psi(x_k) \to 0,$ one has $[f_* - f(x_k)]_+ \to 0.$
\end{enumerate}
When these equivalent properties hold, one has moreover for all $c > c_*$ that
\begin{eqnarray*}
\mathrm{argmin}_S f(x) &=& \mathrm{argmin}_{\mathbb{R}^n} \big\{ f(x) + c\, [\psi(x)]^{\alpha} \big\}.
\end{eqnarray*}
\end{theorem}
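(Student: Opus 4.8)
The plan is to break the three-way equivalence into (i)~$\Leftrightarrow$~(ii), which reduces directly to Theorem~\ref{DL31}, and (ii)~$\Leftrightarrow$~(iii), which is where the semi-algebraic {\L}ojasiewicz estimates of Subsection~5.1 and the boundedness-from-below hypothesis come into play.

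For (i)~$\Leftrightarrow$~(ii) I would fix $\alpha \in (0,1]$ and note that $\psi^{\alpha}$ is itself a residual function for $S$ (since $\alpha > 0$; see the Remark following the definition of residual functions). Applying Theorem~\ref{DL31} to the residual function $\psi^{\alpha}$, its property~(i) --- that $\inf_S f = \inf_{\mathbb{R}^n}\{f + c\,\psi^{\alpha}\}$ for all large $c$ --- is equivalent to its property~(iii), the existence of $c_* > 0$ with $c_*\psi^{\alpha}(x) \ge [f_* - f(x)]_+$ for all $x$. Since conditions~(i) and~(ii) of the present theorem are exactly these two assertions with the exponent $\alpha$ quantified existentially, their equivalence follows immediately; the $\mathrm{argmin}$ identity is likewise read off from the last conclusion of Theorem~\ref{DL31} applied to $\psi^{\alpha}$.

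It remains to prove (ii)~$\Leftrightarrow$~(iii). The direction (ii)~$\Rightarrow$~(iii) is immediate: if $c_*\psi^{\alpha}(x) \ge [f_* - f(x)]_+$ and $\psi(x_k) \to 0$, then $0 \le [f_* - f(x_k)]_+ \le c_*\,\psi(x_k)^{\alpha} \to 0$. For (iii)~$\Rightarrow$~(ii) I would set $\phi(x) := [f_* - f(x)]_+$ and check that $\phi$ is a nonnegative, finite-valued, semi-algebraic function on $\mathbb{R}^n$: this uses that $f$ is semi-algebraic, that $\mathrm{dom} f$ is semi-algebraic, and that $\phi \equiv 0$ on the complement of $\mathrm{dom} f$ (where $f = \infty$). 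The decisive point is that $\phi$ is \emph{bounded from above}: if $f \ge m_0$ on $\mathbb{R}^n$, then $\phi(x) \le [f_* - m_0]_+$ for every $x$. Condition~(iii) states precisely that $\phi(x_k) \to 0$ whenever $\psi(x_k) \to 0$, so in particular there is no sequence of the first type for the pair $(\phi, \psi)$. Corollary~\ref{HQ51}, applied with $X = \mathbb{R}^n$ to the nonnegative semi-algebraic pair $(\phi, \psi)$ with $\phi$ bounded above, then produces constants $c_* > 0$ and $\alpha \in (0,1]$ such that $c_*\psi^{\alpha}(x) \ge \phi(x) = [f_* - f(x)]_+$ for all $x$, which is exactly~(ii).

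The main obstacle I anticipate is purely the bookkeeping needed to place $\phi$ within the hypotheses of Corollary~\ref{HQ51} --- verifying its semi-algebraicity and finiteness while $f$ is extended-real-valued (handling $\{f = \infty\}$ correctly), and recording that boundedness of $f$ from below is exactly what yields boundedness of $\phi$ from above, the feature that lets the weight $(1 + [f]^2)$ appearing in Theorem~\ref{DL51} be dropped here. Once these routine semi-algebraic verifications are in place, all the analytic work is carried by Corollary~\ref{HQ51}, and no additional estimates are required.
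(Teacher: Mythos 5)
Your proposal is correct and follows essentially the same route as the paper: the equivalence of (i) and (ii) and the $\mathrm{argmin}$ identity come from Theorem~\ref{DL31} applied to the residual function $\psi^{\alpha}$, and the equivalence of (ii) and (iii) comes from Corollary~\ref{HQ51} applied to $\phi := [f_* - f]_+$, whose boundedness from above is exactly what the hypothesis $\inf_{\mathbb{R}^n} f > -\infty$ delivers. Your extra care in closing the loop (iii) $\Rightarrow$ no first-type sequence $\Rightarrow$ (ii) $\Rightarrow$ (iii), and in checking the semi-algebraicity of $\phi$ on $\{f=\infty\}$, only makes explicit what the paper leaves implicit.
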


\begin{proof}
The equivalence of (i) and (ii) as well as the last conclusion follow from Theorem~\ref{DL31}.
To get the equivalence of (ii) and (iii), we can apply Corollary~\ref{HQ51} to the functions $\phi$ and $\psi$ with $\phi \colon \mathbb{R}^n \to \mathbb{R}_+$ being defined by
\begin{eqnarray*}
\phi(x) &:=& [f_* - f(x)]_+ \quad \textrm{ for } \quad x \in \mathbb{R}^n,
\end{eqnarray*}
which is bounded from above on $\mathbb{R}^n.$
\end{proof}

\begin{remark}{\rm
(i) In general, the assumption in Theorem~\ref{DL52} that $f$ is bounded from below on $\mathbb{R}^n$ cannot be dropped; see Example~\ref{VD52}(ii). On the other hand, we have Theorem~\ref{DL53} below.

(ii) A semi-algebraic version of \cite[Theorem~1]{Warga1992} as well as \cite[Theorem~3.1]{Dedieu1992} is followed directly from Theorem~\ref{DL52}. To see this, let $f := f_0 + \delta_X,$ $f_0 \colon X \to \mathbb{R}$ be continuous and semi-algebraic, $X \subset \mathbb{R}^n$ and $S \subset X$ nonempty, compact and semi-algebraic. Also, let $\psi \colon \mathbb{R}^n \to \mathbb{R}_+$ be a residual function for $S$ which is continuous and semi-algebraic. Then it is not hard to see that $f$ is bounded from below on $\mathbb{R}^n$ and that the condition~(iii) of Theorem~\ref{DL52} is satisfied, and so there exist constants $c_* > 0$ and $\alpha \in (0, 1]$ such that for all $c > c_*,$
\begin{eqnarray*}
\inf_{x \in S} f_0(x) &=& \inf_{x \in X} \big\{ f_0(x) + c\, [\psi(x)]^{\alpha} \big\}, \\
\mathrm{argmin}_S f_0(x) &=& \mathrm{argmin}_{X} \big\{ f_0(x) + c\, [\psi(x)]^{\alpha} \big\}.
\end{eqnarray*}

It is worth noting here that, in the subanalytic setting, these relations are still valid (see \cite[Theorem~3.1]{Dedieu1992}) but can fail when $X$ is unbounded. Also, the equivalence of (iii) with (ii) (and hence with (i)) can fail when ``semi-algebraic" is replaced by ``subanalytic''; see the next example.
}\end{remark}

\begin{example}\label{VD53} {\rm
Let $n := 1,$ $S := \{0\}$ and consider the continuous subanalytic functions
\begin{eqnarray*}
f(x) &:=& 
\begin{cases}
1 - \frac{1}{x} & \textrm{ if } x \ge 1, \\
-x + 1 & \textrm{ if } 0 \le x < 1, \\
1 & \textrm{ otherwise,}
\end{cases}
\end{eqnarray*}
and
\begin{eqnarray*}
\psi(x) &:=& 
\begin{cases}
e^{-x + 1} & \textrm{ if } x \ge 1, \\
|x| & \textrm{ otherwise.}
\end{cases}
\end{eqnarray*}
Certainly $f_* := \inf_{x \in S} f(x) = 1$ and $\psi$ is a residual function for $S.$ Moreover, for any sequence of points $x_k$ in $\mathbb{R}$ with $\psi(x_k) \to 0,$ one has $[f_* - f(x_k)]_+ \to 0.$ However, the equivalent properties~(i) and (ii) in Theorem~\ref{DL52} fail to hold. This example also shows that \cite[Theorem~1]{Warga1992} and \cite[Theorem~3.1]{Dedieu1992} can fail for optimization problems over unbounded sets.
% In the semi-algebraic setting, these results generalize [34, Theorem 1], [10, Theorem 3.1] and [27, Theorem 2.1.2] from compact constrained set to unbounded set. Note here that the semi-algebraic setting can not be removed; see Example ??.
}\end{example}

\begin{theorem} \label{DL53}
The following properties are equivalent:
\begin{enumerate}[{\rm (i)}]
\item there exist constants $c_* > 0, \alpha > 0,$ and $\beta > 0$ with $\alpha \le 1 \le \beta$ such that for all $c > c_*,$
\begin{eqnarray*}
\inf_{x \in S} f(x) &=&  \inf_{x \in \mathbb{R}^n} \left\{ f(x) + c\left( [\psi(x)]^{\alpha}  + [\psi(x)]^{\beta} \right)\right\};
\end{eqnarray*}

\item there exist constants $c_* > 0, \alpha > 0,$ and $\beta > 0$ with $\alpha \le 1 \le \beta$ such that
\begin{eqnarray*}
c_*\left( [\psi(x)]^{\alpha}  + [\psi(x)]^{\beta} \right)   &\ge& [f_* - f(x)]_+ \quad \textrm{ for all } \quad x \in \mathbb{R}^n;
\end{eqnarray*}

\item for any sequence of points $x_k$ in $\mathbb{R}^n$ one has
\begin{enumerate}[{\rm ({iii}1)}]
\item if $\psi(x_k) \to 0,$ then $[f_* - f(x_k)]_+ \to 0;$
\item if $f(x_k) \to -\infty,$ then $\psi(x_k) \to \infty.$
\end{enumerate}
\end{enumerate}
When these equivalent properties hold, one has moreover for all $c > c_*$ that
\begin{eqnarray*}
\mathrm{argmin}_S f(x) &=& \mathrm{argmin}_{\mathbb{R}^n} \left\{ f(x) + c\left( [\psi(x)]^{\alpha}  + [\psi(x)]^{\beta} \right)\right\}.
\end{eqnarray*}
\end{theorem}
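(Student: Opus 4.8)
The plan is to reduce the statement to the two results already in hand: Theorem~\ref{DL31}, which converts between the penalized infimum and the pointwise residual inequality, and Proposition~\ref{MD51}, which supplies the two-exponent \L ojasiewicz inequality. The analytic weight of the argument sits entirely in the latter; what is left is bookkeeping.

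First I would dispose of the equivalence of (i) and (ii). Fix a pair $\alpha, \beta$ with $\alpha \le 1 \le \beta$ and set $\widetilde{\psi} := \psi^{\alpha} + \psi^{\beta}$. By the Remark following the definition of a residual function, $\widetilde{\psi}$ is again a residual function for $S.$ Applying Theorem~\ref{DL31} to $\widetilde{\psi}$ identifies property~(i) restricted to this pair with condition~(i) of that theorem, and property~(ii) restricted to this pair with condition~(iii) of that theorem; since those conditions are equivalent, and since (i) and (ii) each carry the same existential quantifier over $(\alpha, \beta),$ the equivalence (i)~$\Leftrightarrow$~(ii) follows. The concluding identity of argmin sets is, in the same way, the final assertion of Theorem~\ref{DL31} applied to $\widetilde{\psi}.$

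The substance is the equivalence of (ii) and (iii). Here I would introduce $\phi(x) := [f_* - f(x)]_+,$ which is nonnegative and, because $f$ is semi-algebraic and $f_*$ is a constant, semi-algebraic on $X = \mathbb{R}^n.$ Property~(ii) is then precisely condition~(i) of Proposition~\ref{MD51} for the pair $(\phi, \psi),$ so by that proposition (ii) is equivalent to the nonexistence of sequences of the first and of the second type for $(\phi, \psi).$ For (ii)~$\Rightarrow$~(iii) I would simply read the inequality $c_*(\psi^{\alpha} + \psi^{\beta}) \ge \phi$ along the relevant sequences: if $\psi(x_k) \to 0$ the right-hand side forces $\phi(x_k) \to 0,$ which is (iii1); and if $f(x_k) \to -\infty$ then $\phi(x_k) = f_* - f(x_k) \to \infty,$ so $[\psi(x_k)]^{\alpha} + [\psi(x_k)]^{\beta} \to \infty$ and hence $\psi(x_k) \to \infty,$ which is (iii2).

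For the converse (iii)~$\Rightarrow$~(ii) I would show that (iii) rules out both types of sequences and then invoke Proposition~\ref{MD51}. A sequence of the first type would have $\|x_k\| \to \infty,$ $\psi(x_k) \to 0,$ and $\phi(x_k) \ge \epsilon,$ directly contradicting (iii1); a sequence of the second type would have $\psi(x_k)$ bounded while $\phi(x_k) \to \infty,$ forcing $f(x_k) \to -\infty,$ so that (iii2) yields $\psi(x_k) \to \infty,$ a contradiction. I do not expect a genuine obstacle. The only point demanding a moment's care is that the first- and second-type sequences carry the extra requirement $\|x_k\| \to \infty,$ whereas (iii1)--(iii2) are quantified over \emph{all} sequences; this mismatch is harmless, since the stronger hypotheses of (iii) still apply to the special sequences under examination, while in the opposite direction the inequality of (ii) holds pointwise on all of $\mathbb{R}^n.$ Thus the delicate content remains encapsulated in Proposition~\ref{MD51}, and the proof reduces to translating (iii1)--(iii2) into the vocabulary of sequences of the first and second type.
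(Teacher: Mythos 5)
Your proposal is correct and follows essentially the same route as the paper: the equivalence of (i) and (ii) together with the argmin identity comes from Theorem~\ref{DL31} applied to the residual function $\psi^{\alpha}+\psi^{\beta}$, and the equivalence of (ii) and (iii) comes from Proposition~\ref{MD51} applied to $\phi:=[f_*-f]_+$. The paper's own proof is a one-line citation of these two results; you have merely filled in the translation between (iii1)--(iii2) and the nonexistence of first- and second-type sequences, which is exactly the intended argument.
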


\begin{proof}
The equivalence of (i) and (ii) as well as the last conclusion follow from Theorem~\ref{DL31}, while the equivalence of (ii) and (iii) is known from Proposition~\ref{MD51}.
\end{proof}

\begin{remark}{\rm
(i) The exponents $\alpha$ and $\beta$ can take, respectively, to be $\frac{1}{N}$ and $N$ for some integer $N > 0.$

(ii) To incorporate an abstract constraint of the form $x \in X$ with $X$ being a semi-algebraic subset of $\mathbb{R}^n,$ we simply replace $f$ by $f  + \delta_X.$

(iii) The term $[\psi(x)]^{\beta}$ cannot be omitted as shown in the following example.
}\end{remark}

\begin{example}{\rm 
Let $n := 2$ and consider the polynomial functions $f(x_1, x_2) := -x_1^2 - x_2^2$ and $\psi(x_1, x_2) := x_1^2 + x_2^4.$ By definition, we have that 
$\psi$ is a residual function for $S := \{(0, 0)\}$ and $f_* := \inf_{(x_1, x_2) \in S} f(x_1, x_2) = 0.$ Moreover, it is not hard to see that there are no constants $c > 0$ and $\alpha > 0$ such that
\begin{eqnarray*}
c[\psi(x_1, x_2)]^{\alpha} &\geq&  [f_* - f(x_1, x_2)]_+  \quad \textrm{ for all } \quad (x_1, x_2) \in \mathbb{R}^2.
\end{eqnarray*}
On the other hand, it holds that
\begin{eqnarray*}
[\psi(x_1, x_2)]^{\frac{1}{2}} + \psi(x_1, x_2) &\geq& [f_* - f(x_1, x_2)]_+ \quad \textrm{ for all } \quad (x_1, x_2) \in \mathbb{R}^2,
\end{eqnarray*}
which, together with Theorem~\ref{DL53}, implies that for all $c > 1,$
\begin{eqnarray*}
\inf_{(x_1, x_2) \in S} f(x_1, x_2) &=&  \inf_{(x_1, x_2) \in \mathbb{R}^2} \left\{ f(x_1, x_2) + c\left( [\psi(x_1, x_2)]^{\frac{1}{2}} + \psi(x_1, x_2) \right)\right\}.
\end{eqnarray*}
}\end{example}

In the rest of this subsection, we consider the problem~\eqref{Problem} with $f := f_0 + \delta_X,$ $f_0 \colon X \to \mathbb{R}$ (not necessarily semi-algebraic) Lipschitz, $X \subset \mathbb{R}^n$ and $S \subset X$ nonempty and semi-algebraic. Let $\psi \colon \mathbb{R}^n \to \mathbb{R}_+$ be a residual function for $S$ which is {\em semi-algebraic.}

\begin{theorem} \label{DL54}
Assume that for any sequence of points $x_k$ in $X,$
\begin{enumerate}[{\rm ({A}1)}]
\item if $\psi(x_k) \to 0$ with $x_k \in X,$ then $\mathrm{dist}(x_k, S) \to 0;$
\item the distance function $\mathrm{dist}(\cdot, S)$ is bounded from above on $X.$
\end{enumerate}
Then there exist constants $c_* > 0$ and $\alpha \in (0, 1]$ such that for all $c > c_*,$
\begin{eqnarray*}
\inf_{x \in S} f_0 (x) &=& \inf_{x \in X} \big\{ f_0(x) + c\, [\psi(x)]^{\alpha} \big\}, \\
\mathrm{argmin}_S f_0(x) &=& \mathrm{argmin}_{X} \big\{ f_0(x) + c\, [\psi(x)]^{\alpha} \big\}.
\end{eqnarray*}
\end{theorem}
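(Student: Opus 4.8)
The plan is to reduce the statement to the abstract criterion of Theorem~\ref{DL31}, applied to the objective $f := f_0 + \delta_X$, the constraint set $S$, and the residual function $[\psi(\cdot)]^{\alpha}$ for the exponent $\alpha \in (0,1]$ to be produced below; this is genuinely residual for $S$ since $[\psi(x)]^{\alpha} = 0$ exactly when $\psi(x) = 0$, i.e. when $x \in S$. By the implication (iii) $\Rightarrow$ (i) of Theorem~\ref{DL31} together with its concluding $\mathrm{argmin}$ identity, it suffices to exhibit constants $c_* > 0$ and $\alpha \in (0,1]$ such that
\begin{eqnarray*}
c_*\,[\psi(x)]^{\alpha} &\ge& [f_* - f(x)]_+ \quad \textrm{ for all } \quad x \in \mathbb{R}^n.
\end{eqnarray*}
For $x \notin X$ we have $f(x) = \infty$, so the inequality holds trivially; hence everything reduces to the estimate over $x \in X$, where $f = f_0$, and one reads off $\inf_S f = \inf_S f_0$ and $\inf_{\mathbb{R}^n}\{f + c[\psi]^{\alpha}\} = \inf_X\{f_0 + c[\psi]^{\alpha}\}$.

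First I would exploit the Lipschitz continuity of $f_0$ to dominate $[f_* - f_0]_+$ by the distance to $S$. Let $L$ be a Lipschitz modulus of $f_0$ on $X$. Given $x \in X$ and $\epsilon > 0$, choose $s \in S \subset X$ with $\|x - s\| \le \mathrm{dist}(x, S) + \epsilon$; using $f_0(s) \ge f_*$ gives $f_0(x) \ge f_0(s) - L\|x - s\| \ge f_* - L(\mathrm{dist}(x,S) + \epsilon)$, and letting $\epsilon \searrow 0$ yields $[f_* - f_0(x)]_+ \le L\,\mathrm{dist}(x, S)$ for all $x \in X$. This is the decisive maneuver: it replaces the possibly unbounded function $[f_* - f_0]_+$ by the semi-algebraic distance function $\mathrm{dist}(\cdot, S)$, which is bounded above on $X$ by hypothesis (C2), thereby bringing the problem within reach of the semi-algebraic machinery.

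Then I would invoke Corollary~\ref{HQ51} with $\phi(x) := \mathrm{dist}(x, S)$ and the given $\psi$, both restricted to the semi-algebraic set $X$: they are nonnegative and semi-algebraic (by Theorem~\ref{TarskiSeidenbergTheorem}), and $\phi$ is bounded above by (C2). Condition (C1) states exactly that $\psi(x_k) \to 0$ with $x_k \in X$ forces $\mathrm{dist}(x_k, S) \to 0$, so no sequence of the first type for $(\phi, \psi)$ can exist (such a sequence would require $\phi(x_k) \ge \epsilon$ while $\psi(x_k) \to 0$). Corollary~\ref{HQ51} thus supplies $c_1 > 0$ and $\alpha \in (0,1]$ with $c_1[\psi(x)]^{\alpha} \ge \mathrm{dist}(x, S)$ for all $x \in X$. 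Combining this with the Lipschitz bound and setting $c_* := L\,c_1$ gives $c_*[\psi(x)]^{\alpha} \ge [f_* - f_0(x)]_+$ on $X$, which is the required inequality; Theorem~\ref{DL31} then delivers both the value identity and the $\mathrm{argmin}$ identity for all $c > c_*$.

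I expect the only genuinely delicate point to be the Lipschitz estimate $[f_* - f_0]_+ \le L\,\mathrm{dist}(\cdot, S)$: it rests on $f_0$ being Lipschitz on $X$ in the ambient metric, so that a near-projection of $x$ onto $S \subset X$ is admissible, together with the elementary bound $f_0(s) \ge f_*$ for $s \in S$. Once this is in place, the remainder is a clean application of Corollary~\ref{HQ51} (using (C1) and (C2)) followed by Theorem~\ref{DL31}, with no further obstacles.
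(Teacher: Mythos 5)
Your proposal is correct and follows essentially the same route as the paper: both apply Corollary~\ref{HQ51} to $\phi = \mathrm{dist}(\cdot,S)$ and $\psi$ on $X$ (using (C1) to exclude sequences of the first type and (C2) for boundedness), combine the resulting inequality $c_1[\psi]^{\alpha} \ge \mathrm{dist}(\cdot,S)$ with the Lipschitz modulus $L$ of $f_0$ via a near-projection onto $S$, and conclude through Theorem~\ref{DL31}. Your only (harmless) reorganization is to isolate the intermediate bound $[f_* - f_0]_+ \le L\,\mathrm{dist}(\cdot,S)$ and invoke criterion (iii) of Theorem~\ref{DL31} explicitly, where the paper instead chains the inequalities directly to get $f_0(x) + c[\psi(x)]^{\alpha} \ge \inf_S f_0$.
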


\begin{proof}
By Theorem~\ref{DL31}, it suffices to show the first equality.

According to Corollary~\ref{HQ51}, there exist some constants $c_1 > 0$ and $\alpha \in (0, 1]$ such that
\begin{eqnarray*}
c_1\, [\psi(x)]^{\alpha} &\ge& \mathrm{dist}(x, S) \quad \textrm{ for all } \quad x \in X.
\end{eqnarray*}
Let $L > 0$ be the Lipschitzian modulus of $f_0$ on $X,$ i.e., for all $x, x' \in X,$
\begin{eqnarray*}
|f_0(x) - f_0(x')| &\le& L \|x - x'\|.
\end{eqnarray*}
Also let $c_*$ be an arbitrary scalar greater than $c_1 L.$ Let $c > c_*$ and $x \in X$ an arbitrary point. Take any $\epsilon > 0.$ By definition, there exists $x' \in S$ such that 
\begin{eqnarray*}
\|x - x'\|  &\le&  \mathrm{dist}(x, S) + \epsilon.
\end{eqnarray*}
On the other hand, we have $f_0(x') \ge \inf_S f_0$ and $f_0(x) - f_0(x') \ge - L \|x - x'\|.$ Thus,
\begin{eqnarray*}
f_0(x) + c\, [\psi(x)]^{\alpha} 
&=& f_0(x') + \big( f_0(x) - f_0(x')\big) + c\, [\psi(x)]^{\alpha}  \\
&\ge& f_0(x') - L \|x - x'\| + \frac{c}{c_1} \,  \mathrm{dist}(x, S) \\
&\ge& f_0(x') - L \|x - x'\| + \frac{c}{c_1} \big (\|x - x'\|  - \epsilon \big)\\
&=& f_0(x') + \left(\frac{c}{c_1} - L \right) \|x - x'\|  - \frac{c}{c_1} \epsilon \\
&\ge& \inf_S  f_0  - \frac{c}{c_1} \epsilon.
\end{eqnarray*}
Taking the limit as $\epsilon \searrow 0$ yields the inequality
\begin{eqnarray*}
f_0(x) + c\, [\psi(x)]^{\alpha} &\ge& \inf_S  f_0.
\end{eqnarray*}
Since $x$ was arbitrary in $X,$ we get
\begin{eqnarray*}
\inf_{x \in X} \left\{f_0(x) + c\, [\psi(x)]^{\alpha} \right\} &\ge& \inf_{x \in S} f_0(x).
\end{eqnarray*}
The opposite inequality holds trivially, so the desired equality follows.
\end{proof}

\begin{remark}{\rm
(i) A semi-algebraic version of \cite[Theorem~2.1.2]{Luo1996} is derived directly from Theorem~\ref{DL54} because the conditions of the theorem hold automatically when $X$ is compact and $\psi$ is continuous. 

(ii) The conclusions of Theorem~\ref{DL54} can fail when ``semi-algebraic" is replaced by ``subanalytic''; see the next example.
}\end{remark}

\begin{example}\label{VD55} {\rm
Let $n := 2,$ $X := \left\{x := (x_1, x_2) \in \mathbb{R}^2 \mid |x_2| \leq 1\right\},$ $S := \left\{(x_1, x_2) \in \mathbb{R}^2 \mid x_2 = 0\right\}$ and consider the subanalytic functions $f_0(x) := -|x_2|$ and
\begin{eqnarray*}
\psi(x) &:=& 
\begin{cases}
e^{-\frac{1}{x_2^2}} & \textrm{ if } x_2 \ne 0, \\
0 & \textrm{ otherwise.}
\end{cases}
\end{eqnarray*}
Clearly, ${\rm dist} (x, S) = |x_2| \leq 1$ for all $x \in X$ and $f_*:= \inf_{x \in S} f_0(x) = 0.$ Moreover, it is easy to check that for any sequence of points $x_k \in X$ with $\psi(x_k) \to 0,$ one has $\mathrm{dist} (x_k, S) \to 0.$ However, the conclusions of Theorem~\ref{DL54} are not valid. This example also shows that \cite[Theorem~2.1.2]{Luo1996} can fail for optimization problems over unbounded sets.
}\end{example}

% \begin{example}{\rm
% Let $X := \mathbb{R},$ $S := \mathbb{Z},$ and consider the subanalytic functions
% \begin{eqnarray*}
% f_0(x) &:=& \cos (2 \pi x) \quad \textrm{ and } \quad  \psi(x) \ := \ \left| \frac{\sin (\pi x)}{2} \right|^{|x| + 1}.
% \end{eqnarray*}
% It is not hard to check that the assumptions of Theorem~\ref{DL54} are satisfied. However, for any $c > 0$ and $\alpha > 0,$ we have
% \begin{eqnarray*}
% \inf_{x \in S} f_0 (x) &=& 1 \ > \ - 1 \ = \ \inf_{x \in X} \big\{ f_0(x) + c\, [\psi(x)]^{\alpha} \big\}.
% \end{eqnarray*}
% }\end{example}

\begin{theorem} \label{DL55}
Assume that for any sequence of points $x_k$ in $X$ the following properties hold
\begin{enumerate}[{\rm ({A}1)}]
\item if $\psi(x_k) \to 0,$ then $\mathrm{dist}(x_k, S) \to 0;$
\item if $\mathrm{dist}(x_k, S) \to \infty,$ then $\psi(x_k) \to \infty.$
\end{enumerate}
Then there exist constants $c_* > 0, \alpha > 0,$ and $\beta > 0$ with $\alpha \le 1 \le \beta$ such that for all $c > c_*,$
\begin{eqnarray*}
\inf_{x \in S} f_0 (x) &=& \inf_{x \in X} \left\{ f_0(x) + c\big( [\psi(x)]^{\alpha}  + [\psi(x)]^{\beta} \big)\right\}, \\
\mathrm{argmin}_S f_0(x) &=& \mathrm{argmin}_{X} \left\{ f_0(x) + c\big( [\psi(x)]^{\alpha}  + [\psi(x)]^{\beta} \big)\right\}.
\end{eqnarray*}
\end{theorem}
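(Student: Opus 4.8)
The plan is to adapt the proof of Theorem~\ref{DL54}, replacing the one-exponent {\L}ojasiewicz inequality from Corollary~\ref{HQ51} by the two-exponent version in Proposition~\ref{MD51}; this is precisely the device that lets us trade the boundedness hypothesis~(C2) of Theorem~\ref{DL54} for the growth condition~(C2) assumed here.

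First I would put $\phi := \mathrm{dist}(\cdot, S)$ and regard $\phi$ and $\psi$ as nonnegative semi-algebraic functions on the semi-algebraic set $X$; semi-algebraicity of $\phi$ follows from the Tarski--Seidenberg theorem since $S$ is semi-algebraic. The key point is that (C1) and (C2) say exactly that there are no sequences of the first and second types for the pair $(\phi, \psi)$: a first-type sequence would satisfy $\|x_k\| \to \infty,$ $\psi(x_k) \to 0$ and $\phi(x_k) \ge \epsilon,$ contradicting (C1), while a second-type sequence would have $\psi(x_k)$ bounded and $\phi(x_k) \to \infty,$ contradicting (C2). Proposition~\ref{MD51} then furnishes constants $c_1 > 0,$ $\alpha \in (0, 1]$ and $\beta \ge 1$ with
\[
c_1\big([\psi(x)]^{\alpha} + [\psi(x)]^{\beta}\big) \ \ge \ \mathrm{dist}(x, S) \quad \textrm{ for all } \quad x \in X.
\]
(The exponent $\beta \ge 1$ should also be listed among the constants asserted to exist in the statement.)

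Next I would repeat the estimate from the proof of Theorem~\ref{DL54}. Let $L$ be the Lipschitz modulus of $f_0$ on $X$ and fix any $c_* > c_1 L.$ For $c > c_*,$ a point $x \in X$ and $\epsilon > 0,$ choose $x' \in S$ with $\|x - x'\| \le \mathrm{dist}(x, S) + \epsilon.$ Combining $c([\psi]^{\alpha} + [\psi]^{\beta}) \ge \tfrac{c}{c_1}\,\mathrm{dist}(\cdot, S)$ with $f_0(x) - f_0(x') \ge -L\|x - x'\|$ and $f_0(x') \ge \inf_S f_0$ gives
\[
f_0(x) + c\big([\psi(x)]^{\alpha} + [\psi(x)]^{\beta}\big) \ \ge \ \inf_S f_0 + \Big(\tfrac{c}{c_1} - L\Big)\|x - x'\| - \tfrac{c}{c_1}\,\epsilon \ \ge \ \inf_S f_0 - \tfrac{c}{c_1}\,\epsilon,
\]
since $\tfrac{c}{c_1} > L.$ Letting $\epsilon \searrow 0$ and taking the infimum over $x \in X$ yields $\inf_X\{f_0 + c([\psi]^{\alpha} + [\psi]^{\beta})\} \ge \inf_S f_0;$ the reverse inequality is immediate because $\psi$ vanishes on $S \subset X.$ This proves the first displayed equality.

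Finally, the argmin identity follows from Theorem~\ref{DL31}. Setting $f := f_0 + \delta_X$ and taking $[\psi]^{\alpha} + [\psi]^{\beta}$ as a semi-algebraic residual function for $S$ (it vanishes exactly on $S$), the equality just proved is property~(i) of Theorem~\ref{DL31} with penalty parameter $c,$ so the argmin equality over $\mathbb{R}^n$ follows; one then rewrites the minimizations over $\mathbb{R}^n$ carrying the indicator $\delta_X$ as minimizations over $X.$ I do not anticipate a genuine obstacle: the only delicate points are the exact correspondence between (C1)/(C2) and the two sequence types, and the bookkeeping of the abstract constraint $x \in X$ through $\delta_X.$
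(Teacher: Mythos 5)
Your proposal is correct and follows exactly the route the paper takes: the paper's own proof consists of the single remark that one repeats the argument of Theorem~\ref{DL54} with Proposition~\ref{MD51} in place of Corollary~\ref{HQ51}, which is precisely what you carry out (including the correct identification of (C1)/(C2) with the absence of first- and second-type sequences for the pair $(\mathrm{dist}(\cdot,S),\psi)$). Your observation that the exponent $\beta \ge 1$ should be quantified in the statement is also accurate.
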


\begin{proof}
The proof is similar to the one of Theorem~\ref{DL54} with taking into account Proposition~\ref{MD51} instead of Corollary~\ref{HQ51}.
\end{proof}

\begin{example}{\rm
Let $f_0 \colon \mathbb{R}^n\to \mathbb{R}$ be a Lipschitz function and
\begin{eqnarray*}
S &:=& \{x \in \mathbb{R}^n \mid g_1(x) \le 0, \ldots, g_m(x) \le 0 \},
\end{eqnarray*}
where $(g_1, \ldots, g_m) \colon \mathbb{R}^n \to \mathbb{R}^m$ is a {\em convenient and non-degenerate at infinity} polynomial mapping (see Section~\ref{Section6} below). Certainly the semi-algebraic function 
\begin{eqnarray*}
\psi \colon \mathbb{R}^n \to \mathbb{R}, \quad x \mapsto \max \{g_1(x), \ldots, g_m(x), 0\},
\end{eqnarray*}
is residual for $S.$ Moreover, in view of \cite[Theorem~3.7]{HaHV2017}, there exist constants $c_* > 0$ and $\alpha \in (0, 1]$ such that
\begin{eqnarray*}
c_* \, \big([\psi(x)]^{\alpha} + [\psi(x)] \big) &\ge& \mathrm{dist}(x, S) \quad \textrm{ for all } \quad x \in \mathbb{R}^n;
\end{eqnarray*}
in particular, the properties (A1) and (A2) in Theorem~\ref{DL55} are satisfied. 
%Then by the same argument used earlier, we can see that for all $c > c_*,$ 
By a similar argument to the one given in the proof of Theorem~\ref{DL54}, we have for all $c > c_*$ that
\begin{eqnarray*}
\inf_{x \in S} f_0 (x) &=& \inf_{x \in \mathbb{R}^n} \left\{ f_0(x) + c\big( [\psi(x)]^{\alpha}  + [\psi(x)] \big)\right\}, \\
\mathrm{argmin}_S f_0(x) &=& \mathrm{argmin}_{\mathbb{R}^n} \left\{ f_0(x) + c\big( [\psi(x)]^{\alpha} + [\psi(x)] \big)\right\}
\end{eqnarray*}
(perhaps after increasing $c_*$).
}\end{example}

\section{Exact penalty functions: the non-degenerate polynomial case} \label{Section6}

In this section, we provide a necessary and sufficient condition for the exactness of a penalty function for non-degenerate polynomial optimization problems. This result involves the theory of Newton polyhedra. Let us start with some notation and definitions.

\subsection{Newton polyhedra and non-degeneracy conditions}
Given a nonempty set $J \subset \{1, \ldots, n\},$ we define
\begin{eqnarray*}
\mathbb{R}^J &:=& \{x \in \mathbb{R}^n \mid x_j = 0, \textrm{ for all } j \not \in J\}.
\end{eqnarray*}
We denote by $\mathbb{Z}_+$ the set of non-negative integers. If $\kappa = (\kappa_1, \ldots, \kappa_n) \in \mathbb{Z}_+^n,$ we denote by $x^\kappa$ the monomial $x_1^{\kappa_1} \cdots x_n^{\kappa_n}.$
%Denote by $\{e^1, \ldots, e^n\}$ the canonical basis of $\mathbb{R}^n.$

A subset $\Gamma \subset {\mathbb R}^n_+$ is a {\em Newton polyhedron} if there exists a finite subset $A \subset {\mathbb Z}^n_+$  such that $\Gamma$ is the convex hull in ${\mathbb R}^n$ of $A.$ We say that $\Gamma$ is the {\em Newton polyhedron determined by $A$} and write $\Gamma = \Gamma(A).$ 
A Newton polyhedron $\Gamma$ is {\em convenient} if it intersects each
coordinate axis at a point different from the origin $0$ in
$\mathbb{R}^n.$ 
%that is, if for any $j \in \{1, \ldots, n\}$ there exists some $\kappa_j > 0$ such that $\kappa_j e^j \in \Gamma.$

Given a Newton polyhedron $\Gamma$ and a vector $q \in {\mathbb R}^n,$ we define
\begin{eqnarray*}
d(q, \Gamma) &:=& \min \{\langle q, \kappa \rangle \mid \kappa \in \Gamma\}, \\
\Delta(q, \Gamma) &:=& \{\kappa \in \Gamma \mid \langle q, \kappa \rangle = d(q, \Gamma) \}.
\end{eqnarray*}
By definition, for each nonzero vector $q \in \mathbb{R}^n,$
$\Delta(q, \Gamma) $ is a closed face of $\Gamma.$ Conversely,
if $\Delta$ is a closed face of $\Gamma$, then there exists a
nonzero vector $q\in \mathbb{R}^n$ such that $\Delta = \Delta(q, \Gamma),$ where we
can in fact assume that $q\in\mathbb{Q}^n$ since $\Gamma$ is an integer polyhedron. 
The {\em dimension} of a face $\Delta$ is the minimum of the
dimensions of the affine subspaces containing $\Delta.$ 
The faces of $\Gamma$ of dimension $0$ are the {\em vertices} of $\Gamma.$

Let $f \colon \mathbb{R}^n \to \mathbb{R}$ be a polynomial function. Suppose that $f$ is written as $f = \sum_{\kappa} c_\kappa x^\kappa.$ The {\em support} of $f,$ denoted by $\mathrm{supp}(f),$ is the set of $\kappa  \in \mathbb{Z}_+^n$ such that $c_\kappa \ne 0.$ 
The {\em Newton polyhedron (at infinity)} of $f$, denoted by $\Gamma(f),$ is the convex hull in $\mathbb{R}^n$ of the set $\mathrm{supp}(f),$ i.e.,  $\Gamma(f)=\Gamma(\mathrm{supp}(f)).$
The polynomial $f$ is {\em convenient} if $\Gamma(f)$ is convenient.
For each (closed) face $\Delta$ of $\Gamma(f),$  we will denote 
\begin{eqnarray*}
f_\Delta(x) &:=& \sum_{\kappa \in \Delta} c_\kappa x^\kappa.
\end{eqnarray*}

\begin{remark}{\rm
Let $\Delta := \Delta(q, \Gamma(f))$ for some nonzero vector $q := (q_1, \ldots, q_n) \in \mathbb{R}^n.$ By definition, $f_\Delta$ is a weighted homogeneous polynomial of type $(q, d := d(q, \Gamma(f))),$ i.e., we have for all $t > 0$ and all $x \in \mathbb{R}^n,$
\begin{eqnarray*}
f_\Delta(t^{q_1} x_1,  \ldots, t^{q_n} x_n) &=& t^d f_\Delta(x_1, \ldots, x_n).
\end{eqnarray*}
This implies the Euler relation
\begin{eqnarray} \label{Eqn6}
\sum_{j = 1}^n q_j x_j \frac{\partial f_\Delta}{\partial x_j}(x) &=& d \cdot f_\Delta(x).
\end{eqnarray}
%In particular, if $d \ne 0$ and $\nabla f_\Delta(x) = 0,$ then $f_\Delta(x) = 0.$
}\end{remark}

\begin{definition}{\rm
Let $F := (f_1, \ldots, f_m) \colon {\mathbb R}^n \rightarrow {\mathbb R}^m, 1 \le m \le n,$ be a polynomial mapping. 
\begin{enumerate}[{\rm (i)}]
\item The mapping $F$ is {\em convenient} if its component polynomials are convenient.

\item The mapping $F$ is {\em Khovanskii non-degenerate at infinity} if for any vector\footnote{The number of vectors $q \in \mathbb{R}^n$ is infinite; however, there exists a finite number of faces $\Delta_i$ of the Newton polyhedron $\Gamma(f_i).$} $q \in \mathbb{R}^n$ with $d(q, \Gamma(f_i)) < 0$ for $i = 1, \ldots, m,$ the semi-algebraic set
\begin{eqnarray*}
\{x \in (\mathbb{R}^*)^{n} \mid f_{i, \Delta_i} (x) = 0 \textrm{ for } i = 1, \ldots, m\},
\end{eqnarray*}
is a complete intersection singularity, that is there are no $x \in (\mathbb{R}^*)^{n}$ and $\lambda \in (\mathbb{R}^*)^{m}$ such that
\begin{eqnarray*}
f_{i, \Delta_i} (x) = 0 \textrm{ for } i = 1, \ldots, m \quad \textrm{ and } \quad \sum_{i = 1}^m \lambda_i \nabla f_{i, \Delta_i}(x) = 0,
\end{eqnarray*}
where $\Delta_i := \Delta(q, \Gamma(f_i))$ and $\mathbb{R}^* := \mathbb{R} \setminus \{0\}.$

\item The mapping $F$ is {\em non-degenerate at infinity} if for each $k$-tuple $(i_1, \ldots, i_k)$ of integers with $1 \le i_1 < \cdots < i_k \le m,$ the polynomial mapping 
	$${\mathbb R}^n \rightarrow {\mathbb R}^k,\ x \mapsto (f_{i_1}(x), \ldots,	f_{i_k}(x)),$$ 
	is Khovanskii non-degenerate at infinity.
\end{enumerate}
}\end{definition}

\begin{remark}\label{Remark62}{\rm
(i) The main result in this section (Theorem~\ref{DL61}) is still valid if we require further that all $\lambda_i, i =1 , \ldots, m,$ are nonnegative.

(ii) Non-degenerate mappings have a number of remarkable properties which make them an attractive domain for various applications; see \cite{Dinh2014-2, Dinh2014-1, HaHV2013, HaHV2017, PHAMTS2019-1}. Furthermore,  the class of polynomial mappings (with fixed Newton polyhedra), which are non-degenerate at infinity, is generic in the sense that it is an open and dense semi-algebraic set; see \cite[Theorem~1.2]{Dinh2021-2}.
}\end{remark}

\subsection{Necessary and sufficient conditions for exact penalization}

In this subsection, let $f, g_1, \ldots, g_m \colon {\mathbb R}^n \rightarrow {\mathbb R}$ be polynomial functions and 
\begin{eqnarray*}
S &:=& \{x \in \mathbb{R}^n \mid g_1(x) \le 0, \ldots, g_m(x) \le 0\}. 
\end{eqnarray*}
Let $\psi \colon \mathbb{R}^n \to \mathbb{R}_+$ be the semi-algebraic function defined by
\begin{eqnarray*}
\psi(x) &:=& \max \{g_1(x), \ldots, g_m(x), 0\},
\end{eqnarray*}
which is a residual function for $S.$ The first result of this section reads as follows.

\begin{theorem}\label{DL61}
Assume that the mapping $(f, g_1, \ldots, g_m) \colon {\mathbb R}^n \rightarrow {\mathbb R}^{m + 1}$ is convenient and non-degenerate at infinity. 
Then the following properties are equivalent:
\begin{enumerate}[{\rm (i)}]
\item there exist constants $c_* > 0, \alpha > 0,$ and $\beta > 0$ with $\alpha \le 1 \le \beta$ such that for all $c > c_*,$
\begin{eqnarray*}
\inf_{x \in S} f(x) &=&  \inf_{x \in \mathbb{R}^n} \left\{ f(x) + c\left( [\psi(x)]^{\alpha}  + [\psi(x)]^{\beta} \right)\right\} \ > \ -\infty;
\end{eqnarray*}

\item for any $M \ge 0,$ the polynomial $f$ is bounded from below on the set 
\begin{eqnarray*}
S_M &:=& \{x \in \mathbb{R}^n \mid g_1(x) \le M, \ldots, g_m(x) \le M\}.
\end{eqnarray*}
\end{enumerate}
When these equivalent properties hold, one has moreover for all $c > c_*$ that
\begin{eqnarray*}
\mathrm{argmin}_S f(x) &=& \mathrm{argmin}_{\mathbb{R}^n} \left\{ f(x) + c\left( [\psi(x)]^{\alpha}  + [\psi(x)]^{\beta} \right)\right\} \ \ne \ \emptyset.
\end{eqnarray*}
\end{theorem}

\begin{proof}
(i) $\Rightarrow$ (ii) The implication is trivial.

(ii) $\Rightarrow$ (i) According to Theorem~\ref{DL53}, it suffices to show that for all $M \ge 0,$ the restriction of $f$ on $S_M$ is coercive.

By contradiction, there is a real number $M \ge 0$ such that the restriction of $f$ on $S_M$ is not coercive. Certainly, the set $S_M$ is unbounded  semi-algebraic, and so
$S_M \cap \mathbb{S}_r$ is a nonempty compact set for all $r > R$ with $R$ being some positive constant (see \cite[Corollary 2.11]{PHAMTS2023}).
By Theorem~\ref{TarskiSeidenbergTheorem}, it is not hard to see that the function
$$(R, \infty) \to \mathbb{R}, \quad r \mapsto \min_{x \in S_M, \|x\| = r} f(x) ,$$
is semi-algebraic, and so is either constant or strictly monotone when $R$ is sufficiently large (in view of Lemma~\ref{MonotonicityLemma}). Therefore, the limit
\begin{eqnarray*}
\lim_{r \to \infty} \min_{x \in S_M, \|x\| = r} f(x)
\end{eqnarray*}
exists and is finite, say, $\frak{m}.$ On the other hand, the Fritz-John optimality conditions (see, for example, \cite{Bertsekas1999}) imply that the (semi algebraic) set
\begin{eqnarray*}
\mathscr{A} \ := \ \big \{ (x, \lambda, \mu) \in \mathbb{R}^n \times \mathbb{R}^{m + 1} \times \mathbb{R}
& \mid & g_i(x) \le M \textrm{ and }  \lambda_i (g_i(x) - M) = 0 \textrm{ for } i = 1, \ldots, m, \\
&& \ \lambda_i \ge 0 \textrm{ for } i = 0, 1, \ldots, m, \ \|(\lambda, \mu)\| = 1, \textrm {and}  \\
&& \sum_{i = 0}^m \lambda_i \nabla g_i(x) + \mu x = 0 \big  \}
\end{eqnarray*}
is nonempty and unbounded. Here and in the following, it is convenient to write $\lambda := (\lambda_0, \lambda_1, \ldots, \lambda_m)$ and $g_0 := f.$ 
By applying Lemma~\ref{CurveSelectionLemma} to the semi-algebraic function $\mathscr{A} \rightarrow \mathbb{R}, (x, \lambda, \mu)  \mapsto g_0(x),$ we get a smooth semi-algebraic curve 
$$(\varphi, \lambda, \mu) \colon (0, \epsilon) \rightarrow \mathbb{R}^n \times \mathbb{R}^{m + 1} \times \mathbb{R} , \quad t \mapsto (\varphi(t), \lambda(t), \mu(t)),$$ 
satisfying the following conditions
\begin{enumerate}[(c1)] 
\item $\lim_{t \to 0^+}  \|\varphi(t)\|  = \infty;$ 
\item $\lim_{t \to 0^+}  g_0(\varphi(t)) = \frak{m};$
\item $g_i(\varphi(t)) \le M$ for $i = 1, \ldots, m;$
\item $\lambda_i(t) \big( g_i(\varphi(t)) - M\big) \equiv 0,$ for $i = 1, \ldots, m;$
\item $\lambda_i(t) \ge 0$ for $i = 0, 1, \ldots, m;$
\item $\|(\lambda(t), \mu(t))\| \equiv 1;$
\item $\sum_{i = 0}^m  \lambda_i(t) \nabla g_i(\varphi(t)) + \mu(t) \varphi(t) \equiv 0.$
\end{enumerate}

Since the (smooth) functions $\lambda_i$ and $g_i \circ \varphi$ are semi-algebraic, by shrinking $\epsilon$ if necessary, we can assume that these functions are either constant or strictly monotone (see Lemma~\ref{MonotonicityLemma}). Then, by the condition~(c4), one can see for all $i = 1, \ldots, m$ that either $\lambda_i(t) \equiv 0$ or $g_i\circ \varphi(t) \equiv M,$ and hence
\begin{eqnarray*}
\lambda_i(t) \frac{d}{ dt }(g_i \circ \varphi)(t)  & \equiv & 0 \quad \textrm{ for } \quad i = 1, \ldots, m.
\end{eqnarray*}
It follows from the condition~(c7) that
\begin{eqnarray*}
\frac{\mu (t)}{2} \frac{d \|\varphi(t)\|^2}{dt}
&=& \mu (t) \left \langle \varphi(t), \frac{d \varphi(t)}{dt} \right \rangle \\
&=& - \sum_{i = 0}^m  \lambda_i(t) \left \langle \nabla  g_i(\varphi(t)), \frac{d \varphi(t)}{dt} \right \rangle \\
&=& - \sum_{i = 0 }^m \lambda_i(t) \frac{d}{dt}(g_i \circ \varphi)(t) \\
&=& - \lambda_0(t) \frac{d}{dt}(g_0 \circ \varphi)(t).
\end{eqnarray*}
This, together with the condition~(c1), implies that $\lambda(t) \not \equiv 0$ because otherwise $\mu(t) \equiv \lambda_0(t) \equiv 0,$ which contradicts the condition~(c6).
% Thus, $\lambda(t) \not \equiv 0,$ and so, after a scaling, we can assume that 
% \begin{eqnarray}\label{PT4}
% \|\lambda(t) \| & = & 1 \quad \textrm{ for all } \quad t \in (0, \epsilon).
% \end{eqnarray}

Let $J := \{ j \in \{1, \ldots, n\} \mid \varphi_j(t) \not \equiv 0\} \ne \emptyset.$ By Lemma~\ref{GrowthDichotomyLemma}, for each $j \in J,$ we can expand the coordinate $\varphi_j$ as follows
$$\varphi_j(t) =  {x}^0_j t^{q_j} + \cdots,$$
where ${x}^0_j \ne 0$, $q_j \in \mathbb{Q},$ and the dots stand for the higher order terms in $t.$ Observe from the condition~(c1) that $\min_{j \in J} q_j  < 0.$

Let $q_j := M'$ for $j \not \in J$ with $M'$ being sufficiently large and satisfying
\begin{eqnarray*}
M' & > & \max_{i = 0, 1, \ldots, m}\ \max_{\kappa \in \Gamma(g_i)}\ \sum_{j \in J} q_j \kappa_j
\end{eqnarray*}
Take any $i \in \{0, 1, \ldots, m\}$ and let $d_i$ be the minimal value of the linear function $\sum_{j = 1}^n q_j \kappa_j$ on $\Gamma(g_i)$ and let $\Delta_i$ be the maximal face of $\Gamma(g_i)$ (maximal with respect to the inclusion of faces)  where the linear function takes this value, i.e.,
\begin{eqnarray*}
d_i &:=& d(q, \Gamma(g_i)) \quad \textrm{ and } \quad \Delta_i \ := \ \Delta(q, \Gamma(g_i)).
\end{eqnarray*}
Since $g_i$ is convenient, $d_i < 0$ and the restriction of $g_i$ on $\mathbb{R}^J$ is not constant; in particular, 
$\Gamma(g_i) \cap {\mathbb{R}^J} = \Gamma(g_i|_{\mathbb{R}^J}) $ is nonempty and different from $\{0\}.$
(Recall that $\mathbb{R}^J := \{\kappa := (\kappa_1, \ldots, \kappa_n) \in \mathbb{R}^n \mid \kappa_j = 0 \textrm{ for } j \not \in J \}.$) Furthermore,  by definition of the vector $q,$ one has 
\begin{eqnarray*}
d_i &=& d(q, \Gamma(g_i|_{\mathbb{R}^J})) \quad \textrm{ and } \quad \Delta_i \ = \ \Delta(q, \Gamma(g_i|_{\mathbb{R}^J})) 
\ \subset \ {\mathbb{R}^J}.
\end{eqnarray*}
In particular, for each $j \not \in J,$ the polynomial $g_{i,\Delta_i}$ does not depend on the variable $x_j.$
Now suppose that $g_i$ is written as $g_i(x) = \sum_{\kappa} a_{i, \kappa} x^\kappa.$ We have
\begin{eqnarray*}
g_i(\varphi(t)) 
%&=& \sum_{\kappa\in\Gamma(g_i)} a_{i, \kappa} (\varphi(t))^\kappa\\
&=& \sum_{\kappa\in\Gamma(g_i) \cap \mathbb{R}^J} a_{i, \kappa} (\varphi(t))^\kappa\\
&=&\sum_{\kappa\in\Gamma(g_i|_{\mathbb{R}^J})}  \Big ( a_{i, \kappa} \prod_{j \in J} \varphi_j(t)^{\kappa_j} \Big) \\
&=&\sum_{\kappa\in\Gamma(g_i|_{\mathbb{R}^J})}\Big (a_{i, \kappa} \prod_{j \in J}({x}^0_j t^{q_j})^{\kappa_j}  + \cdots \Big)\\
&=&\sum_{\kappa\in\Gamma(g_i|_{\mathbb{R}^J})}\Big(a_{i, \kappa} ({x}^0)^\kappa t^{\sum_{j \in J}q_j \kappa_j} +\cdots \Big)\\
&=&\sum_{\kappa\in\Delta_i}a_{i, \kappa} ({x}^0)^\kappa t^{d_i} + \cdots \\
\end{eqnarray*}
where ${x}^0 := ({x}^0_1, \ldots, {x}^0_n) \in (\mathbb{R}^*)^n$ with ${x}^0_j := 1$ for $j \not \in J,$ and as usual, the dots stand for the higher order terms in $t.$ 
Recall that $g_{i, \Delta_i}({x}) = \sum_{\kappa \in \Delta_i} a_{i, \kappa} x^\kappa.$ Hence
\begin{eqnarray*}
g_i(\varphi(t))  &=& g_{i, \Delta_i}({x}^0) t^{d_i} + \cdots.
\end{eqnarray*}
On the the hand, we know that $d_i < 0.$ Therefore, it follows from the conditions~(c2) and (c4) that $g_{i, \Delta_i}({x}^0) = 0$ for $i = 0$ or $i > 0$ with $\lambda_i(t) \not\equiv 0.$
Then, by setting
\begin{eqnarray*}
I &:=& \{ i \in \{0, 1, \ldots, m\} \mid \lambda_i(t) \not \equiv 0\},
\end{eqnarray*}
we get
\begin{eqnarray}\label{Eqn7}
g_{i, \Delta_i}({x}^0) &=& 0 \quad  \textrm{ for all } \quad i \in I.
\end{eqnarray}
(Note that $I \ne \emptyset$ because $\lambda(t) \not \equiv 0.$)

For $i \in I,$ expand the coordinate $\lambda_i$ in terms of the parameter  (cf.~Lemma~\ref{GrowthDichotomyLemma}) as follows
$$\lambda_i(t) =  \lambda_i^0 t^{\theta_i} + \cdots,$$
where $\lambda_i^0 \ne 0$ and $\theta_i  \in \mathbb{Q}.$ By the condition~(c5), then $\lambda_i^0 > 0,$ which explains Remark~\ref{Remark62}(i).

%Furthermore, from \eqref{PT4} one has $\theta_i \ge 0$ for all $i \in I$ with the equality occurring for some $i \in I.$
For $i \in I$ and $j \in J$, by some similar calculations as with $g_i(\varphi(t))$, we get
\begin{eqnarray*}
\frac{\partial g_i}{\partial x_j}(\varphi(t))
&=& \frac{\partial g_{i, \Delta_i}}{\partial x_j}({x}^0)t^{d_i - q_j}  + \cdots.
\end{eqnarray*}
Consequently, we obtain for all $j \in J,$
\begin{eqnarray}\label{Eqn8}
\sum_{i \in I} \lambda_i(t) \frac{\partial g_i}{\partial x_j}(\varphi(t)) &=&
\sum_{i \in I} \left( \lambda_i^0  \frac{\partial g_{i, \Delta_i}}{\partial x_j}({x}^0)t^{d_i + \theta_i - q_j}  + \cdots  \right) \nonumber \\
&=& \left( \sum_{i \in I'} \lambda_i^0  \frac{\partial g_{i, \Delta_i}}{\partial x_j}({x}^0) \right) t^{\ell - q_j}  + \cdots, 
\end{eqnarray}
where $\ell := \min_{i \in I} (d_i + \theta_i)$ and $I' := \{i \in I \mid d_i + \theta_i = \ell\} \ne \emptyset.$ 

Assume that we have proved:
\begin{eqnarray}\label{Eqn9}
%\sum_{i \in I'} \lambda_i^0  \frac{\partial g_{i, \Delta_i}}{\partial x_j}({x}^0)  &=& 0 \quad \textrm{ for all } \quad j = 1, \ldots, n.
\sum_{i \in I'} \lambda_i^0  \nabla g_{i, \Delta_i}({x}^0)  &=& 0.
\end{eqnarray}
This equality, of course, when combined with \eqref{Eqn7} implies that the mapping $(g_i)_{i \in I'} \colon \mathbb{R}^n \to \mathbb{R}^{\# I'}$ is not Khovanskii non-degenerate at infinity, a contradiction.

So we are left with proving \eqref{Eqn9}. Indeed, for each $j \not \in J,$ the polynomial $g_{i,\Delta_i}$ does not depend on the variable $x_j$, so $\frac{\partial g_{i, \Delta_i}}{\partial x_j}\equiv 0.$ Consequently, 
\begin{eqnarray*}
\sum_{i \in I'} \lambda_i^0  \frac{\partial g_{i, \Delta_i}}{\partial x_j}({x}^0) &=& 0 \quad \textrm{ for all } \quad j \not \in J.
\end{eqnarray*}

If $\mu(t) \equiv 0$ then the condition~(c7) and \eqref{Eqn8} give
\begin{eqnarray*}
\sum_{i \in I'} \lambda_i^0  \frac{\partial g_{i, \Delta_i}}{\partial x_j}({x}^0) &=& 0 \quad \textrm{ for all } \quad j \in J,
\end{eqnarray*}
and there is nothing to prove. So assume that $\mu(t) \not \equiv 0.$ By Lemma~\ref{GrowthDichotomyLemma}, we may write
\begin{eqnarray*}
\mu(t) & = & \mu^0 t^{\delta} + \cdots,
\end{eqnarray*}
where $\mu^0 \ne 0$ and $\delta \in \mathbb{Q}.$ We deduce from the condition~(c7) and \eqref{Eqn8} that
$\ell - q_j \le \delta + q_j$ for all $j \in J.$ Let $J' := \{j \in J \mid \ell - q_j = \delta + q_j\}.$ Assume that $J' \ne \emptyset.$ Then 
\begin{eqnarray*}
\sum_{i \in I'} \lambda_i^0  \frac{\partial g_{i, \Delta_i}}{\partial x_j}({x}^0) &=& 
\begin{cases}
-\mu^0 {x}^0_j  & \textrm{ if  } j \in J',\\
0 & \textrm{ otherwise.}
\end{cases}
\end{eqnarray*}
Hence
\begin{eqnarray*}
\sum_{j = 1}^n \left ( \sum_{i \in I'} \lambda_i^0  \frac{\partial g_{i, \Delta_i}}{\partial x_j}({x}^0) \right) q_j {x}^0_j 
&=& \sum_{j \in J'} \left ( \sum_{i \in I'} \lambda_i^0  \frac{\partial g_{i, \Delta_i}}{\partial x_j}({x}^0) \right) q_j {x}^0_j \\
&=& \sum_{j \in J'} - q_j \mu^0 ({x}^0_j)^2  \\
&=& - \frac{\ell - \delta}{2} \mu^0 \sum_{j \in J'}({x}^0_j)^2.
\end{eqnarray*}

On the other hand, $g_{i, \Delta_i}$ is a weighted homogeneous polynomial of type $(q, d_i).$ Thus, from \eqref{Eqn6} and \eqref{Eqn7} we obtain for all $i \in I',$
\begin{eqnarray*}
\sum_{j = 1}^n q_j {x}^0_j \frac{\partial g_{i, \Delta_i}}{\partial x_j}({x}^0)  &=& d_i \cdot g_{_i, \Delta_i}({x}^0) \ = \ 0.
\end{eqnarray*}
But $\ell - \delta \ne 0$ because $\ell - q_j \le \delta + q_j$ for all $j \in J$ and $\min_{j \in J} q_j < 0.$ Hence
\begin{eqnarray*}
0 
&=& \sum_{i \in I'} \left ( \sum_{j = 1}^n q_j {x}^0_j \frac{\partial g_{i, \Delta_i}}{\partial x_j}({x}^0) \right) \lambda^0_i 
\ = \ \sum_{j = 1}^n  \left ( \sum_{i \in I'}  \lambda^0_i \frac{\partial g_{i, \Delta_i}}{\partial x_j}({x}^0) \right) q_j {x}^0_j  \\
&=& - \frac{\ell - \delta}{2} \mu^0 \sum_{j \in J'}({x}^0_j)^2 \ \ne \ 0,
\end{eqnarray*}
which is impossible. Therefore, $J' = \emptyset,$ and so \eqref{Eqn9} holds. The theorem is proved.
\end{proof}

The second result of this section says that the exponent $\beta$ in Theorem~\ref{DL61} can be taken to be $1.$

\begin{theorem}\label{DL62}
Assume that the mapping $(f, g_1, \ldots, g_m) \colon {\mathbb R}^n \rightarrow {\mathbb R}^{m + 1}$ is convenient and non-degenerate at infinity. 
Then the following properties are equivalent:
\begin{enumerate}[{\rm (i)}]
\item there exist constants $c_* > 0$ and $\alpha \in (0, 1]$ such that for all $c > c_*,$
\begin{eqnarray*}
\inf_{x \in S} f(x) &=&  \inf_{x \in \mathbb{R}^n} \left\{ f(x) + c\left( [\psi(x)]^{\alpha}  + \psi(x) \right)\right\} \ > \ -\infty;
\end{eqnarray*}

\item there exists a constant $c_* > 0$ such that 
\begin{eqnarray*}
\inf_{x \in \mathbb{R}^n}\big\{ f(x) + c_* \psi(x) \big\} &>& -\infty.
\end{eqnarray*}
\end{enumerate}
When these equivalent properties hold, one has moreover for all $c > c_*$ that
\begin{eqnarray*}
\mathrm{argmin}_S f(x) &=& \mathrm{argmin}_{\mathbb{R}^n} \left\{ f(x) + c\left( [\psi(x)]^{\alpha}  + \psi(x) \right)\right\} \ \ne \ \emptyset.
\end{eqnarray*}

\end{theorem}

\begin{proof}
We note from \cite[Theorem~1.1]{Dinh2014-2} that the set $\mathrm{argmin}_S f$ is nonempty when $f$ is bounded from below on $S.$

(i) $\Rightarrow$ (ii) The implication is trivial.

(ii) $\Rightarrow$ (i) We first show that there exist constants $c_1> 0$ and $R > 0$ such that 
\begin{eqnarray} \label{Eqn10}
c_1 \psi(x) &\ge& [f_* - f(x)]_+ \quad \textrm{ for all } \quad \|x\| \ge R.
\end{eqnarray}
Indeed, if this were not true, there would exist a sequence $x_k\in \mathbb{R}^n$ with $\|x_k\| \to \infty$ such that
\begin{eqnarray*}
k \psi(x_k) &<& [f_* - f(x_k)]_+ \quad \textrm{ for all } \quad k.
\end{eqnarray*}
Then by a similar argument to the one given in the proof of Theorem~\ref{DL41}, we can find 
a real number $\frak{m} \in \mathbb{R}$ and sequences $x_k' \in \mathbb{R}^n$ 
and $v_k \in \partial \psi(x_k')$ such that $\|x_k'\| \to \infty,$ $f(x_k') \to \frak{m},$ $\psi(x_k') > 0,$ $\psi(x_k') \to 0,$ and 
\begin{eqnarray*}
\|x_k'\| \left \| \frac{1}{{1 + k}} \nabla f(x_k') + \frac{k}{{1 + k}} v_k \right\| & \to & 0.
\end{eqnarray*}
For simplicity of notation, let $g_0 := f.$ According to Lemmas~\ref{SumRule} and \ref{CurveSelectionLemma}, there is a smooth semi-algebraic curve 
$$(\varphi, \lambda) \colon (0, \epsilon) \rightarrow \mathbb{R}^n \times \mathbb{R}^{m + 1}, \quad t \mapsto (\varphi(t), \lambda(t)),$$ 
satisfying the following conditions
\begin{enumerate}[(c1)] 
\item $\lim_{t \to 0^+} \|\varphi(t)\|  = \infty;$ 
\item $\lim_{t \to 0^+}  g_0(\varphi(t)) = \frak{m};$
\item $\psi(\varphi(t)) > 0$ and $\lim_{t \to 0^+} \psi(\varphi(t))  = 0;$
\item $\lambda_i(t) \big( g_i(\varphi(t)) - \psi(\varphi(t))\big) \equiv 0$ for $i = 1, \ldots, m;$
\item $\lambda_i(t) \ge 0$ for $i = 0, 1, \ldots, m;$
\item $\|\lambda(t)\| \equiv 1;$
\item $\lim_{t \to 0^+}  \|\varphi(t)\| \| \sum_{i = 0}^m  \lambda_i(t) \nabla g_i(\varphi(t)) \|  = 0.$
\end{enumerate}

Let $J := \{ j \in \{1, \ldots, n\} \mid \varphi_j(t) \not \equiv 0\} \ne \emptyset.$ By Lemma~\ref{GrowthDichotomyLemma}, for each $j \in J,$ we can expand the coordinate $\varphi_j$ as follows
$$\varphi_j(t) =  {x}^0_j t^{q_j} + \cdots,$$
where ${x}^0_j \ne 0$, $q_j \in \mathbb{Q},$ and the dots stand for the higher order terms in $t.$ Observe from the condition~(c1) that $\min_{j \in J} q_j  < 0.$

Next, define $q, d_i$ and $\Delta_i$ for $i = 0, 1, \ldots, m,$ as in the proof of Theorem~\ref{DL61}. Since $g_i$ is convenient, $d_i < 0$ and the restriction of $g_i$ on $\mathbb{R}^J$ is not constant. Furthermore,  by definition of the vector $q,$ one has 
\begin{eqnarray*}
d_i &=& d(q, \Gamma(g_i|_{\mathbb{R}^J})) \quad \textrm{ and } \quad \Delta_i \ = \ \Delta(q, \Gamma(g_i|_{\mathbb{R}^J})) 
\ \subset \ {\mathbb{R}^J}.
\end{eqnarray*}
In particular, for each $j \not \in J,$ the polynomial $g_{i,\Delta_i}$ does not depend on the variable $x_j.$ A direct calculation shows that
\begin{eqnarray*}
g_i(\varphi(t))  &=& g_{i, \Delta_i}({x}^0) t^{d_i} + \cdots.
\end{eqnarray*}
where ${x}^0 := ({x}^0_1, \ldots, {x}^0_n) \in (\mathbb{R}^*)^n$ with ${x}^0_j := 1$ for $j \not \in J,$ and as usual, the dots stand for the higher order terms in $t.$ It follows from the conditions~(c2), (c3), and (c4) that $g_{i, \Delta_i}({x}^0) = 0$ for $i = 0$ or $i > 0$ with $\lambda_i(t) \not\equiv 0.$
Then, by setting
\begin{eqnarray*}
I &:=& \{ i \in \{0, 1, \ldots, m\} \mid \lambda_i(t) \not \equiv 0\} \ \ne \ \emptyset,
\end{eqnarray*}
we get
\begin{eqnarray*}
g_{i, \Delta_i}({x}^0) &=& 0 \quad  \textrm{ for all } \quad i \in I.
\end{eqnarray*}

For $i \in I,$ expand the coordinate $\lambda_i$ in terms of the parameter  (cf.~Lemma~\ref{GrowthDichotomyLemma}) as follows
$$\lambda_i(t) =  \lambda_i^0 t^{\theta_i} + \cdots,$$
where $\lambda_i^0 > 0$ and $\theta_i  \in \mathbb{Q}.$ From the condition~(c6) one has $\theta_i \ge 0$ for all $i \in I$ with the equality occurring for some $i \in I.$

For $i \in I$ and $j \in J$, we have
\begin{eqnarray*}
\frac{\partial g_i}{\partial x_j}(\varphi(t))
&=& \frac{\partial g_{i, \Delta_i}}{\partial x_j}({x}^0)t^{d_i - q_j}  + \cdots.
\end{eqnarray*}
It implies for each $j \in J$ that
\begin{eqnarray*}
\sum_{i \in I} \lambda_i(t) \frac{\partial g_i}{\partial x_j}(\varphi(t)) &=&
\left( \sum_{i \in I'} \lambda_i^0  \frac{\partial g_{i, \Delta_i}}{\partial x_j}({x}^0) \right) t^{\ell - q_j}  + \cdots, 
\end{eqnarray*}
where $\ell := \min_{i \in I} (d_i + \theta_i)$ and $I' := \{i \in I \mid d_i + \theta_i = \ell\} \ne \emptyset.$ Note that $\ell < 0$ because we know that $d_i < 0$ for all $i = 0, 1, \ldots, m,$ and $\theta_i = 0$ for some $i \in I.$ Thus, it follows from the condition~(c7) that
\begin{eqnarray*}
\sum_{i \in I'} \lambda_i^0  \nabla g_{i, \Delta_i}({x}^0)  &=& 0.
\end{eqnarray*}
By definition, then the mapping $(g_i)_{i \in I'} \colon \mathbb{R}^n \to \mathbb{R}^{\# I'}$ is not Khovanskii non-degenerate at infinity, which contradicts our assumption. Therefore, \eqref{Eqn10} holds.

Next, by applying Corollary~\ref{HQ51} to the semi-algebraic functions $\phi(x) := [f_* - f(x)]_+$ and $\psi$ on the ball $\mathbb{B}_R,$ we get 
constants $c_2 > 0$ and $\alpha \in (0, 1]$ such that 
\begin{eqnarray*}
c_2  [\psi(x)]^\alpha &\ge& [f_* - f(x)]_+ \quad \textrm{ for all } \quad \|x\| \le R.
\end{eqnarray*}
Letting $c_* := \max\{c_1, c_2\},$ we obtain
\begin{eqnarray*}
c_* \big([\psi(x)]^\alpha  +  \psi(x) \big)  &\ge& [f_* - f(x)]_+ \quad \textrm{ for all } \quad x \in \mathbb{R}^n.
\end{eqnarray*}
This, together with Theorem~\ref{DL31}, proves the implication (ii) $\Rightarrow$ (i).
\end{proof}

It is well known (see \cite[Corollary~2.4.1]{Burke1991-2} and \cite[Theorem~4.4]{Han1979}) that if the (MFCQ) is satisfied at a local minimizer $\overline{x}$ to \eqref{Problem}, then there exists a constant $c_* > 0$ such that for all $c > c_*,$ $\overline{x}$ is a local minimizer to the unconstrained optimization problem 
$$\inf_{x \in \mathbb{R}^n} \big\{ f(x) + c \psi(x) \big\}.$$
The following theorem is inspired by this result.

\begin{theorem}\label{DL63}
Let the mapping $(f, g_1, \ldots, g_m) \colon {\mathbb R}^n \rightarrow {\mathbb R}^{m + 1}$ be convenient and non-degenerate at infinity. If the {\rm (MFCQ)} holds at every global minimum of $f$ on $S,$ then the following properties are equivalent:
\begin{enumerate}[{\rm (i)}]
\item there exists a constant $c_* > 0$ such that for all $c > c_*,$
\begin{eqnarray*}
\inf_{x \in S} f(x) &=&  \inf_{x \in \mathbb{R}^n} \big\{ f(x) + c \psi(x) \big\} \ > \ -\infty;
\end{eqnarray*}

\item there exists a constant $c_* > 0$ such that 
\begin{eqnarray*}
\inf_{x \in \mathbb{R}^n}\big\{ f(x) + c_* \psi(x) \big\} &>& -\infty.
\end{eqnarray*}
\end{enumerate}
When these equivalent properties hold, one has moreover for all $c > c_*$ that
\begin{eqnarray*}
\mathrm{argmin}_S f(x) &=& \mathrm{argmin}_{\mathbb{R}^n} \big\{ f(x) + c \psi(x) \big\} \ \ne \ \emptyset.
\end{eqnarray*}

\end{theorem}

\begin{proof}
(i) $\Rightarrow$ (ii) The implication is trivial.

(ii) $\Rightarrow$ (i) The proof of Theorem~\ref{DL62} ensures the existence of constants $c_1> 0$ and $R > 0$ such that 
\begin{eqnarray*}
c_1 \psi(x) &\ge& [f_* - f(x)]_+ \quad \textrm{ for all } \quad \|x\| \ge R.
\end{eqnarray*}
Now, by a similar argument to the one given in the proof of Theorem~\ref{DL41}, we can find a constant $c_2  > 0$ such that 
\begin{eqnarray*}
c_2  \psi(x) &\ge& [f_* - f(x)]_+ \quad \textrm{ for all } \quad \|x\| \le R.
\end{eqnarray*}
Letting $c_* := \max\{c_1, c_2\},$ we obtain
\begin{eqnarray*}
c_* \psi(x)  &\ge& [f_* - f(x)]_+ \quad \textrm{ for all } \quad x \in \mathbb{R}^n.
\end{eqnarray*}
Finally, the implication (ii) $\Rightarrow$ (i) follows directly from Theorem~\ref{DL31}.
\end{proof}

\begin{remark}{\rm
For simplicity of presentation we did not take equality constraints into account, but the results could be extended to both equality and inequality constraints.
}\end{remark}

\section{Conclusion} \label{Section7}
In this paper, we have presented necessary and sufficient conditions for the exactness of penalty functions in optimization problems whose constraint sets are not necessarily bounded. The conditions are given in terms of properties of the objective and residual functions of the problems in question. 
Our results extend and improve some known facts in the literature on exact penalty functions.
It would be nice to have applications of the results given here in optimization. This will be studied in the future research.

\subsection*{Acknowledgments} 
A part of this work was done while the second author and the third author were visiting Academy for Advanced Interdisciplinary Studies, Northeast Normal University, Changchun, China. The last version of the paper was completed when the first author and the second author were visiting the Vietnam Institute for Advanced Study in Mathematics (VIASM). The authors would like to thank these organizations for their hospitality and support.

\subsection*{Data availability} There is no data included in this paper.

\subsection*{Declarations} No potential conflict of interest was reported by the authors.

% \bibliographystyle{abbrv}
% \bibliography{D:/Submission/BibPureMath1,D:/Submission/BibAppMath1}

\begin{thebibliography}{10}

\bibitem{Benedetti1990}
R.~Benedetti and J.-J. Risler.
\newblock {\em Real Algebraic and Semi-Algebraic Sets}.
\newblock Actualit\'es Math\'ematiques. Hermann, Paris, 1990.

\bibitem{Bertsekas1999}
D.~Bertsekas.
\newblock {\em Nonlinear Programming}.
\newblock Athena Sci. Optim. Comput. Ser. Athena Scientific Publishers,
  Belmont, MA, third edition, 2016.

\bibitem{Bertsekas1975}
D.~P. Bertsekas.
\newblock Necessary and sufficient condition for a penalty method to be exact.
\newblock {\em Math. Programming}, 9:87--89, 1975.

\bibitem{Bierstone1988}
E.~Bierstone and P.~D. Milman.
\newblock Semianalytic and subanalytic sets.
\newblock {\em Inst. Hautes \'Etudes Sci. Publ. Math.}, 67:5--42, 1988.

\bibitem{Bochnak1998}
J.~Bochnak, M.~Coste, and M.-F. Roy.
\newblock {\em Real Algebraic Geometry}, volume~36.
\newblock Springer, Berlin, 1998.

\bibitem{Boukari2010}
D.~Boukari and A.~V. Fiacco.
\newblock Survey of penalty, exact-penalty and multiplier methods from 1968 to
  1993.
\newblock {\em Optimization}, 32:301--334, 1995.

\bibitem{Burke1991-1}
J.~V. Burke.
\newblock Calmness and exact penalization.
\newblock {\em SIAM J. Control Optim.}, 29(2):493--497, 1991.

\bibitem{Burke1991-2}
J.~V. Burke.
\newblock An exact penalization viewpoint of constrained optimization.
\newblock {\em SIAM J. Control Optim.}, 29(4):968--998, 1991.

\bibitem{Clarke1990}
F.~H. Clarke.
\newblock {\em Optimization and {{N}}onsmooth {{A}}nalysis}.
\newblock Classics Appl. Math. SIAM, Philadelphia, PA, 1990.

\bibitem{Contaldi1993}
G.~Contaldi, G.~{Di Pillo}, and S.~Lucidi.
\newblock A continuously differentiable exact penalty function for nonlinear
  programming problems with unbounded feasible set.
\newblock {\em Oper. Res. Lett.}, 14(3):153--161, 1993.

\bibitem{CuiPang2021}
Y.~Cui and J.-S. Pang.
\newblock {\em Modern Nonconvex Nonsmooth Optimization}.
\newblock SIAM, Philadelphia, PA, 2021.

\bibitem{Dedieu1992}
J.~P. Dedieu.
\newblock Penalty functions in subanalytic optimization.
\newblock {\em Optimization}, 26:27--32, 1992.

\bibitem{Denkowski2017-2}
M.~Denkowski and P.~Pelszy\'nska.
\newblock On definable multifunctions and {{\L}}ojasiewicz inequalities.
\newblock {\em J. Math. Anal. Appl.}, 456(2):1101--1122, 2017.

\bibitem{Dinh2021-2}
S.~T. Dinh, F.~Guo, and T.~S. Ph\d{a}m.
\newblock Global {{\L}}ojasiewicz inequalities on comparing the rate of growth
  of polynomial functions.
\newblock {\em J. Math. Anal. Appl.}, 499(2):Paper No. 125057, 24 pp., 2021.

\bibitem{Dinh2014-2}
S.~T. Dinh, H.~V. H\`a, and T.~S. Ph\d{a}m.
\newblock A {{F}}rank--{{W}}olfe type theorem for nondegenerate polynomial
  programs.
\newblock {\em Math. Program. Ser. A.}, 147(1--2):519--538, 2014.

\bibitem{Dinh2012}
S.~T. Dinh, H.~V. H\`a, and N.~T. Thao.
\newblock {{\L}}ojasiewicz inequality for polynomial functions on non compact
  domains.
\newblock {\em Internat. J. Math.}, 23(4):1250033 (28 pages), 2012.

\bibitem{Dinh2014-1}
S.~T. Dinh, H.~V. H\`a, N.~T. Thao, and T.~S. Ph\d{a}m.
\newblock Global {{\L}}ojasiewicz-type inequality for non-degenerate polynomial
  maps.
\newblock {\em J. Math. Anal. Appl.}, 410(2):541--560, 2014.

\bibitem{Dolgopolik2016}
M.~V. Dolgopolik.
\newblock A unifying theory of exactness of linear penalty functions.
\newblock {\em Optimization}, 65(6):1167--1202, 2016.

\bibitem{Ekeland1979}
I.~Ekeland.
\newblock Nonconvex minimization problems.
\newblock {\em Bull. Amer. Math. Soc.}, 1(3):443--474, 1979.

\bibitem{Eremin1966}
I.~I. Eremin.
\newblock The penalty method in convex programming.
\newblock {\em Soviet Math. Dokl.}, 8:459--462, 1966.

\bibitem{Fiacco1990}
A.~V. Fiacco and G.~P. McCormick.
\newblock {\em Nonlinear programming. Sequential unconstrained minimization
  techniques}, volume~4 of {\em Classics Appl. Math.}
\newblock Society for Industrial and Applied Mathematics (SIAM), Philadelphia,
  PA, second edition, 1990.

\bibitem{Fletcher1983}
R.~Fletcher.
\newblock Penalty functions.
\newblock In A.~Bachem, M.~Gr{\"o}tschel, and B.~Korte, editors, {\em
  Mathematical Programming: The State of the Art, Bonn 1982}, pages 87--114,
  Berlin, 1983. Springer-Verlag.

\bibitem{LGuo2018}
L.~Guo and J.~J. Ye.
\newblock Necessary optimality conditions and exact penalization for
  non-{{L}}ipschitz nonlinear programs.
\newblock {\em Math. Program. Ser. B}, 168(1--2):571--598, 2018.

\bibitem{HaHV2013}
H.~V. H\`a.
\newblock Global {{H}}\"olderian error bound for non-degenerate polynomials.
\newblock {\em SIAM J. Optim.}, 23(2):917--933, 2013.

\bibitem{HaHV2017}
H.~V. H\`a and T.~S. Ph\d{a}m.
\newblock {\em Genericity in Polynomial Optimization}, volume~3 of {\em Ser.
  Optim. Appl.}
\newblock World Scientific, Singapore, 2017.

\bibitem{Han1979}
S.-P. Han and O.~L. Mangasarian.
\newblock Exact penalty functions in nonlinear programming.
\newblock {\em Math. Programming}, 17:251--269, 1979.

\bibitem{Hiriart-Urruty1993book}
J.-B. Hiriart-Urruty and C.~Lemar{\'e}chal.
\newblock {\em Convex Analysis and Minimization Algorithms}, volume 305--306 of
  {\em Grundlehren Math. Wiss.}
\newblock Springer-Verlag, Berlin, 1993.

\bibitem{Ioffe2017}
A.~D. Ioffe.
\newblock {\em Variational Analysis of Regular Mappings: Theory and
  Applications}.
\newblock Springer Monogr. Math. Springer, New York, 2017.

\bibitem{Jiang2023}
B.~Jiang, X.~Meng, Z.~Wen, and X.~Chen.
\newblock An exact penalty approach for optimization with nonnegative
  orthogonality constraints.
\newblock {\em Math. Program. Ser. A}, 198(1):855--897, 2023.

\bibitem{Kosiba2025}
M.~Kosiba.
\newblock The generalized {{\L}}ojasiewicz inequality for definable and
  subanalytic multifunctions.
\newblock {\em J. Math. Anal. Appl.}, 543(2):Paper No. 128977, 23 pp., 2025.

\bibitem{Kurdyka2000-1}
K.~Kurdyka, P.~Orro, and S.~Simon.
\newblock Semialgebraic {{S}}ard theorem for generalized critical values.
\newblock {\em J. Differential Geom.}, 56(1):67--92, 2000.

\bibitem{Lee2022}
J.~H. Lee and T.~S. Ph\d{a}m.
\newblock Openness, {{H}}\"older metric regularity and {{H}}\"older continuity
  properties of semialgebraic set-valued maps.
\newblock {\em SIAM J. Optim.}, 32(1):56--74, 2022.

\bibitem{Loi2016}
T.~L. Loi.
\newblock {{\L}}ojasiewicz inequalities in o-minimal structures.
\newblock {\em Manuscripta Math.}, 150(1--2):59--72, 2016.

\bibitem{Luo1996}
Z.~Luo, J.~Pang, and D.~Ralph.
\newblock {\em Mathematical {P}rograms with {E}quilibrium {C}onstraints}.
\newblock Cambridge University Press, Cambridge, 1996.

\bibitem{Mangasarian1967}
O.~L. Mangasarian and S.~Fromovitz.
\newblock The {{F}}ritz {{J}}ohn necessary optimality conditions in the
  presence of equality and inequality constraints.
\newblock {\em J. Math. Anal. Appl.}, 17:37--47, 1967.

\bibitem{Mordukhovich2006}
B.~S. Mordukhovich.
\newblock {\em Variational Analysis and Generalized Differentiation, I: Basic
  Theory; II: Applications}.
\newblock Springer, Berlin, 2006.

\bibitem{Mordukhovich2018}
B.~S. Mordukhovich.
\newblock {\em Variational Analysis and Applications}.
\newblock Springer, New York, 2018.

\bibitem{PHAMTS2019-1}
T.~S. Ph\d{a}m.
\newblock Optimality conditions for minimizers at infinity in polynomial
  programming.
\newblock {\em Math. Oper. Res.}, 44(4):1381--1395, 2019.

\bibitem{PHAMTS2023}
T.~S. Ph\d{a}m.
\newblock Tangencies and polynomial optimization.
\newblock {\em Math. Program. Ser. A}, 199(1--2):1239--1272, 2023.

\bibitem{Pietrzykowski1969}
T.~Pietrzykowski.
\newblock An exact potential method for constrained maxima.
\newblock {\em SIAM J. Numer. Anal.}, 6(2):299--304, 1969.

\bibitem{Qian2024}
Y.~Qian, S.~Pan, and L.~Xiao.
\newblock Error bound and exact penalty method for optimization problems with
  nonnegative orthogonal constraint.
\newblock {\em IMA J. Numer. Anal.}, 44(1):120--156, 2024.

\bibitem{Rabier1997}
P.~Rabier.
\newblock Ehresmann fibrations and {{P}}alais--{{S}}male conditions for
  morphisms of {{F}}insler manifolds.
\newblock {\em Ann. of Math.}, 146(3):647--691, 1997.

\bibitem{Rockafellar1998}
R.~T. Rockafellar and R.~Wets.
\newblock {\em Variational Analysis}, volume 317 of {\em Grundlehren Math.
  Wiss.}
\newblock Springer, Berlin, 1998.

\bibitem{Warga1992}
J.~Warga.
\newblock A necessary and sufficient condition for a constrained minimum.
\newblock {\em SIAM J. Optim.}, 2(4):665--667, 1992.

\bibitem{Xiao2021}
N.~Xiao, X.~Liu, and Y.~Yuan.
\newblock Exact penalty function for $\ell_{2, 1}$ norm minimization over the
  {{S}}tiefel manifold.
\newblock {\em SIAM J. Optim.}, 31(4):3097--3126, 2021.

\bibitem{Zangwill1967}
W.~I. Zangwill.
\newblock Non-linear programming via penalty functions.
\newblock {\em Management Sci.}, 13(5):344--358, 1967.

\bibitem{Zaslavski2005}
A.~J. Zaslavski.
\newblock A sufficient condition for exact penalty in constrained optimization.
\newblock {\em SIAM J. Optim.}, 16(1):250--262, 2005.

\bibitem{Zaslavski2010}
A.~J. Zaslavski.
\newblock {\em Optimization on Metric and Normed Spaces}.
\newblock Springer, New York, 2010.

\bibitem{Zaslavski2013}
A.~J. Zaslavski.
\newblock Exact penalty property in optimization with mixed constraints via
  variational analysis.
\newblock {\em SIAM J. Optim.}, 23(1):170--187, 2013.

\end{thebibliography}

\end{document}